\providecommand\@dotsep{5}
\def\listtodoname{List of Todos}
\def\listoftodos{\@starttoc{tdo}\listtodoname}
\numberwithin{equation}{section}
\newtheorem{theorem}{Theorem}[section]
\newtheorem{proposition}[theorem]{Proposition}
\newtheorem{lemma}[theorem]{Lemma}
\begin{document}
	
	
	\title [Heteroclinic solutions for some classes of prescribed mean curvature equations in whole $\mathbb{R}^2$]{Heteroclinic solutions for some classes of prescribed mean curvature equations in whole $\mathbb{R}^2$}

	\author{Claudianor O. Alves}
	\author{Renan J. S. Isneri}

	\address[Claudianor O. Alves]{\newline\indent Unidade Acad\^emica de Matem\'atica
		\newline\indent 
		Universidade Federal de Campina Grande,
		\newline\indent
		58429-970, Campina Grande - PB - Brazil}
	\email{\href{mailto:coalves@mat.ufcg.edu.br}{coalves@mat.ufcg.edu.br}}
	
	\address[Renan J. S. Isneri]
	{\newline\indent Unidade Acad\^emica de Matem\'atica
		\newline\indent 
		Universidade Federal de Campina Grande,
		\newline\indent
		58429-970, Campina Grande - PB - Brazil}
	\email{\href{renan.isneri@academico.ufpb.br}{renan.isneri@academico.ufpb.br}}

	\pretolerance10000

	\begin{abstract}
		\noindent The purpose of this paper consists in using  variational methods to establish the existence of heteroclinic solutions for some classes of prescribed mean curvature equations of the type
		$$
		-div\left(\frac{\nabla u}{\sqrt{1+|\nabla u|^2}}\right) + A(\epsilon x,y)V'(u)=0~~\text{ in }~~\mathbb{R}^2,
		$$
		where $\epsilon>0$ and $V$ is a double-well potential with minima at $t=\alpha$ and $t=\beta$ with $\alpha<\beta$. Here, we consider some class of functions $A(x,y)$ that are oscillatory in the variable $y$ and satisfy different geometric conditions such as periodicity  in all variables or asymptotically periodic at infinity.
	\end{abstract}

	\subjclass[2021]{Primary: 35J62, 35A15, 35J93, 34C37; Secondary: 46E30} 
	\keywords{Heteroclinic solutions, Quasilinear elliptic equations, Variational Methods, Mean curvature operator, Orlicz space}

	\maketitle
	
	\section{Introduction}
	
The main goal of this paper is to use variational methods to show the existence of heteroclinic solutions for prescribed mean curvature equation of the type 
$$
-div\left(\frac{\nabla u}{\sqrt{1+|\nabla u|^2}}\right) + A(\epsilon x,y)V'(u)=0~~\text{ in }~~\mathbb{R}^2\eqno(E)
$$ 

\noindent
taking into account different geometric conditions on function $A:\mathbb{R}^2\to\mathbb{R}$ with $\epsilon>0$. Hereafter, we mean by heteroclinic solution a function $u$ that satisfies $(E)$ and has the following asymptotic property at infinity
$$
u(x,y)\rightarrow\alpha\text{ as }x\rightarrow-\infty\text{ and }u(x,y)\rightarrow\beta\text{ as }x\rightarrow+\infty,\text{ uniformly in }y\in\mathbb{R},
$$
where $\alpha, \beta \in \mathbb{R}$ are global minima of the potential  ${V}:\mathbb{R}\to\mathbb{R}$. 

Hereafter, $\mathcal{V}=\{V_{\alpha,\beta}\}$ denotes a set of potentials that satisfy the following properties:  
\begin{itemize}
	\item[($V_1$)] $V_{\alpha,\beta}\in C^1(\mathbb{R},\mathbb{R})$.
	\item[($V_2$)]  $\alpha<\beta$ and $V_{\alpha,\beta}(\alpha)=V_{\alpha,\beta}(\beta)=0$.
	\item[($V_3$)] $V_{\alpha,\beta}(t)\geq 0$ for any $t\in\mathbb{R}$ and $V_{\alpha,\beta}(t)>0$ for all $t\in (\alpha,\beta)$.	
\end{itemize}
Moreover, we assume that the family $\mathcal{V}$ also satisfies the following local uniform estimate involving the numbers $\alpha$ and $\beta$:  
\begin{itemize}
    \item[($V_4$)] Given $\lambda>0$, there exists $M=M(\lambda)>0$ independent of $\alpha, \beta \in (-\lambda,\lambda)$ such that 
    $$ \displaystyle\sup_{t\in(\alpha,\beta)}|V_{\alpha,\beta}'(t)|\leq M, \quad \forall V_{\alpha, \beta} \in \mathcal{V} \quad \text{with} \quad \alpha, \beta \in (-\lambda,\lambda). $$ 
\end{itemize}
The reader is invited to see that we could consider the family $\mathcal{V}$ where the potentials $V_{\alpha, \beta}$ are the form
\begin{equation}\label{v1}
	V_{\alpha,\beta}(t)=(t-\alpha)^2(t-\beta)^2,
\end{equation}
which was inspired by the classical double well Ginzburg-Landau potential. These family is very important, because those potentials arise in the study of heteroclinic solutions for stationary Allen Cahn type equations
$$
-\Delta u+A(x)V'_{\alpha,\beta}(u)=0\quad \text{in}\quad\mathbb{R}^n.
$$
Another important family  $\mathcal{V}$ is related to the Sine-Gordon potential that is formed by potentials of the form    
\begin{equation}\label{v2}
	V_{-\beta,\beta}(t)=\beta+\beta\cos\left(\frac{t\pi}{\beta}\right),
\end{equation}
where $\alpha=-\beta$. For a discussion of physical applications appearing these types of potentials, we refer the interested reader to \cite{Allen,Frenkel}.

In what follows, associated with function $A$ we assume
\begin{itemize}
	\item[($A_1$)] $A$ is continuous and there is $A_0>0$ such that $A(x,y)\geq A_0$ for all $(x,y)\in\mathbb{R}^2$.
	\item[($A_2$)] $A(x,y)=A(x,-y)$ for all $(x,y)\in\mathbb{R}^2$.
	\item[($A_3$)] $A(x,y)=A(x,y+1)$ for any $(x,y)\in\mathbb{R}^2$.
\end{itemize}
Now let us mention the classes of $A$ that we will considered in our work. 
\\
\textbf{Class 1:} $A$ satisfies $(A_1)$-$(A_3)$ and is 1-periodic on the variable $x$.
\\
\textbf{Class 2:} $A$ satisfies $(A_1)$-$(A_3)$ and there exists a continuous function $A_p:\mathbb{R}^2\to\mathbb{R}$, which is 1-periodic on $x$, satisfying $A(x,y)<A_p(x,y)$ for all $(x,y)\in\mathbb{R}^2$ and
$$
|A(x,y)-A_p(x,y)|\to 0\text{ as }|(x,y)|\to+\infty.
$$
\\
\textbf{Class 3:} $A$ satisfies $(A_1)$-$(A_3)$ and
$$
\sup_{y\in[0,1]}A(0,y)<\liminf_{|(x,y)|\rightarrow +\infty}A(x,y)=A_{\infty}<+\infty.
$$
\textbf{Class 4:} $A$ satisfies $(A_2)$-$(A_3)$, is a continuous non-negative function, even in $x$, $A\in L^\infty(\mathbb{R}^2)$ and there exists $K>0$ such that 
$$
\inf_{|x|\geq K,~y\in[0,1]}A(x,y)>0.
$$

We would like to highlight that some of these conditions are well known in the context of the Laplacian operator. For example, a condition like Class 1 was studied by Rabinowitz \cite{Rabinowitz} to show the existence of heteroclinic solution for a class of second order partial differential equations in which he includes the equation of the form
\begin{equation}\label{E1}
	-\Delta u+A(x,y)V'(u)=0~ \text{ in }~\Omega,
\end{equation}
where the set $\Omega$ is a cylindrical domain in $\mathbb{R}^N$ given by $\Omega=\mathbb{R}\times D$ with $D$ being a bounded open set in $\mathbb{R}^{N-1}$ such that $\partial D\in C^1$. In the literature we also find interesting works that study the equation \eqref{E1} in the case that $A(x,y)$ is periodic in all variables when $\Omega=\mathbb{R}^2$, see for example Rabinowitz and Stredulinsky \cite{Rabinowitz1} and Alessio, Gui and Montecchiari \cite{Alessio}. Related to the Classes 2 and 3 we cite a paper by  Alves \cite{Alves0}, where the author established the existence of classical solutions of \eqref{E1} on a cylindrical domain that are heteroclinic in the variable $x$. Finally, the Class 4 was introduced in \cite{Alves2}. 

The prescribed mean curvature operator
\begin{equation}\label{div}
	div\left(\frac{\nabla u}{\sqrt{1+|\nabla u|^2}}\right)
\end{equation}
has been extensively studied in the recent years, due to the close connection with capillarity theory \cite{Finn}. After the pioneering works of Young \cite{Young}, Laplace \cite{Laplace}, and Gauss \cite{Gauss} in the early 18th century about the mean curvature of a capillary surface, much has already been produced in the literature and it is difficult and exhaustive to measure here the vastness of physical applications involving the \eqref{div} operator, however for the interested reader in this subject, we could cite here some problems that appear in optimal transport \cite{Brenier} and in minimal surfaces \cite{Giusti}. Moreover, \eqref{div} also appears in some problems in reaction-diffusion processes which occur frequently in a wide variety of physical and biological settings. For example, in \cite{Kurganov}, Kurganov and Rosenau observed that when the saturation of the diffusion is incorporated into these processes, it may cause a deep impact on the the morphology of the transitions connecting the equilibrium states, as now not only do discontinuous equilibria become permissible, but traveling waves can arise in their place. A specific class of such processes is modeled by the following equation 
\begin{equation}\label{eq}
	u_t=div\left(\frac{\nabla u}{\sqrt{1+|\nabla u|^2}}\right)-aV'(u),
\end{equation}
where the reaction function $V$ is the classical double well Ginzburg-Landau potential and $a$ is a constant. The impact of saturated diffusion on reaction-diffusion processes was investigated by them in the straight line and in the plane.

As indicated in the previous paragraph, \cite{Kurganov} provided a significant physical motivation for the study of equation \eqref{eq} having as main objective the existence of transition-type solutions, that is, entire solutions of \eqref{eq} which are asymptotic in different directions to the equilibrium states of the systems. In this sense, Bonheure, Obersnel and Omari in \cite{Bonheure1} investigated the existence of a heteroclinic solution of the one-dimensional equation
\begin{equation}\label{eq1}
	-\left(\frac{q'}{\sqrt{1+(q')^2}}\right)'+a(t)V'(q)=0~~\text{in}~~\mathbb{R},
\end{equation}
looking for minima of an action functional on a convex subset of $BV_{\text{loc}}(\mathbb{R})$ made of all functions satisfying an asymptotic condition at infinity, where the authors considered as usual $V$ a double-well potential with minima at $t=\pm1$ and $a$ asymptotic to a positive periodic function, that is, $a\in L^{\infty}(\mathbb{R})$ with $0<\displaystyle\operatorname{ess}\inf_{t\in\mathbb{R}}a(t)$ and there is $a^*\in L^{\infty}(\mathbb{R})$ $\tau$-periodic, for some $\tau>0$, such that $a(t)\leq a^*(t)$ almost everywhere on $\mathbb{R}$ satisfying
$$
\operatorname{ess}\!\!\!\!\lim_{|t|\to+\infty}(a^*(t)-a(t))=0.
$$
Recently in \cite{Alves2}, Alves and Isneri also studied the existence of heteroclinic solution for \eqref{eq1}. In that paper, the authors truncated the mean curvature operator to build up a variational framework inspired to the one introduced in \cite{Omari} on Orlicz-Sobolev space $W_{\text{loc}}^{1,\Phi}(\mathbb{R})$ in order to establish the existence of a heteroclinic solution for \eqref{eq1} in the case where the function $a\in L^{\infty}(\mathbb{R})$ is an even non-negative with $0<\displaystyle\inf_{t\geq M}a(t)$ for some $M>0$. Moreover, in the case where the function $a$ is constant, for each initial conditions $q(0)=r_1$ and $q'(0)=r_2$, the uniqueness of the minimal heteroclinic type solutions for \eqref{eq1} has been proved.

Motivated by \cite{Alves2}, \cite{Bonheure1} and \cite{Omari}, in the present paper we intend to analyze the existence of heteroclinic solutions for $(E)$ and their qualitative properties, as well as regularity.  A part of our arguments was inspired by papers due to Rabinowitz \cite{Rabinowitz} and Alves \cite{Alves0}.

The main results of the paper can be now stated in the following form.

\begin{theorem}\label{T1}
	Assume  $\epsilon=1$ and that $A$ belongs to Class 1 or 2. Given $L>0$ there exists $\delta>0$ such that for each $V_{\alpha,\beta} \in \mathcal{V}$ with  $\max\{|\alpha|,|\beta|\}\in(0,\delta)$, the equation $(E)$ with $V=V_{\alpha,\beta}$ possesses a heteroclinic solution $u_{\alpha,\beta}$ from $\alpha$ to $\beta$ in $C^{1,\gamma}_{\text{loc}}(\mathbb{R}^2)$, for some $\gamma\in(0,1)$, satisfying
	\begin{itemize}
		\item [(a)] $u_{\alpha,\beta}$ is 1-periodic on $y$.
		\item [(b)] $\alpha\leq u_{\alpha,\beta}(x,y)\leq \beta$ for any $(x,y)\in\mathbb{R}^2$.  
		\item [(c)] $\|\nabla u_{\alpha,\beta}\|_{L^{\infty}(\mathbb{R}^2)}\leq \sqrt{L}$.
	\end{itemize}
	Moreover, if $V_{\alpha,\beta}\in C^2(\mathbb{R},\mathbb{R})$ then the inequalities in (b) are strict.
\end{theorem}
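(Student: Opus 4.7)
The plan is to adapt the Orlicz--Sobolev truncation strategy developed in \cite{Alves2,Omari} to the two-dimensional setting, combined with a constrained minimization on the infinite strip $\Omega:=\mathbb{R}\times[0,1]$, using the $1$-periodicity in $y$ from $(A_3)$ to transfer the result from the strip to the plane. First I would replace the mean curvature operator by a truncation at level $\sqrt L$: choose a convex even $C^1$ function $\Phi_L$ with $\Phi_L(t)=\sqrt{1+t^2}-1$ for $|t|\le\sqrt L$ and extended with quadratic growth for $|t|>\sqrt L$, so that $\Phi_L$ is an $N$-function satisfying the $\Delta_2$-condition and generating a reflexive Orlicz--Sobolev space $W^{1,\Phi_L}$. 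The truncated functional
$$ J(u)=\int_{\Omega}\Phi_L(|\nabla u|)\,dx\,dy+\int_{\Omega}A(x,y)\,V_{\alpha,\beta}(u)\,dx\,dy $$
is then well-defined and weakly lower semicontinuous.

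Next I would introduce the admissible class $\Gamma_{\alpha,\beta}$ consisting of functions $u\in W^{1,\Phi_L}_{\mathrm{loc}}(\Omega)$ that are $1$-periodic and even in $y$, satisfy $\alpha\le u\le\beta$ pointwise, and realize the asymptotic conditions $u(x,\cdot)\to\alpha$ as $x\to-\infty$ and $u(x,\cdot)\to\beta$ as $x\to+\infty$, and set $c_{\alpha,\beta}=\inf_{\Gamma_{\alpha,\beta}}J$. For \textbf{Class 1}, direct methods produce a minimizer after recentering a minimizing sequence by integer $x$-translations, which preserve $J$ by the $1$-periodicity of $A$ in $x$; the asymptotic conditions are preserved in the limit through standard energy-localization estimates. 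For \textbf{Class 2}, the strict inequality $A<A_p$ together with the minimizer $v$ of the periodic counterpart $J_p$ (obtained in Class 1) yields the strict energy gap
$$ c_{\alpha,\beta}\le J(v)<J_p(v)=c_{\alpha,\beta}^p, $$
which excludes loss of compactness at $|x|\to\infty$ via the usual concentration--compactness dichotomy and again delivers a minimizer $u_{\alpha,\beta}$. The bound $\alpha\le u_{\alpha,\beta}\le\beta$ of $(b)$ is enforced throughout by noting that the truncation $\min\{\max\{u,\alpha\},\beta\}$ cannot increase $J$, while the Euler--Lagrange equation, combined with the symmetries $(A_2)$ and $(A_3)$, extends the minimizer to a weak solution of the truncated version of $(E)$ on all of $\mathbb{R}^2$.

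The hardest step --- and the very reason for the smallness condition on $\max\{|\alpha|,|\beta|\}$ --- is to establish part $(c)$: one must show that for $\delta=\delta(L)$ sufficiently small, $\|\nabla u_{\alpha,\beta}\|_{L^\infty(\mathbb{R}^2)}\le\sqrt L$, so that the truncation is inactive and $u_{\alpha,\beta}$ genuinely solves $(E)$. This requires gradient regularity theory for the truncated quasilinear operator, driven by the uniform bound $|V_{\alpha,\beta}'(u_{\alpha,\beta})|\le M(\delta)$ furnished by $(V_4)$ together with the small oscillation $\beta-\alpha<2\delta$. A Moser-type iteration on the Euler--Lagrange equation (exploiting the quadratic behavior of $\Phi_L$ at infinity) delivers an $L^\infty$-bound on $|\nabla u_{\alpha,\beta}|$ that can be made $\le\sqrt L$ by shrinking $\delta$.

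Finally, once $u_{\alpha,\beta}$ is a classical bounded solution of $(E)$, the $C^{1,\gamma}_{\mathrm{loc}}$ regularity follows from standard elliptic theory for the mean curvature operator; the uniform asymptotic limits follow from $J(u_{\alpha,\beta})<\infty$ combined with $A\ge A_0>0$ (which forces $V_{\alpha,\beta}(u_{\alpha,\beta})\to 0$ in $L^1$ on far-away vertical strips) and elliptic regularity; and when $V_{\alpha,\beta}\in C^2$, the strict inequalities in $(b)$ are obtained by applying the strong maximum principle to the linearization of $(E)$ at $u_{\alpha,\beta}$.
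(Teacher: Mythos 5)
Your plan follows the same overall route as the paper: truncate the curvature operator so that $\Phi_L$ coincides with $\sqrt{1+t^2}-1$ on $|t|\le\sqrt L$ and has quadratic growth beyond, run a constrained minimization on the strip $\Omega=\mathbb{R}\times(0,1)$ with integer $x$-translations recentering a minimizing sequence (Class 1) or a strict energy gap $c_{\alpha,\beta}<c^{p}_{\alpha,\beta}$ (Class 2), and then show that for $\max\{|\alpha|,|\beta|\}$ small the gradient never reaches the truncation level, so the solution of $(E)_L$ is a genuine solution of $(E)$. That global architecture is exactly the one the paper uses.

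The place where your argument is not tight is precisely the step you correctly identify as the crux, part (c). You write that $(V_4)$ gives $|V_{\alpha,\beta}'(u_{\alpha,\beta})|\le M(\delta)$ and that a Moser-type iteration then yields an $L^\infty$ bound on $|\nabla u_{\alpha,\beta}|$ that tends to zero as $\delta\to 0$. Two objections. First, $(V_4)$ produces a bound $M=M(\lambda)$ that is \emph{uniform} over all $\alpha,\beta\in(-\lambda,\lambda)$; it does not shrink with $\delta$. What shrinks is $\|u_{\alpha,\beta}\|_{L^\infty(\mathbb{R}^2)}\le\max\{|\alpha|,|\beta|\}<\delta$, while $\|A(\cdot)V_{\alpha,\beta}'(u_{\alpha,\beta})\|_{L^\infty}$ stays of order one. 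Second, a straight Moser iteration on the Euler--Lagrange equation controls $\|u\|_{L^\infty}$, not $\|\nabla u\|_{L^\infty}$; for the gradient one needs $C^{1,\gamma}$ interior estimates of Lieberman--DiBenedetto type (the paper invokes \cite[Theorem 1.7]{Lieberman}), and these give a \emph{uniform} $C^{1,\gamma}$ bound, not directly a small one. The correct mechanism, which the paper makes explicit in Lemma \ref{R10}, is a compactness/contradiction argument: translate the putative bad solutions $u_{\alpha_n,\beta_n}$ to recenter the bad balls at the origin, use the uniform $C^{1,\gamma_0}_{\mathrm{loc}}$ bounds and Arzelà--Ascoli to extract a $C^1(B_1(0))$-limit, note that this limit must be $\equiv 0$ because $\|u_{\alpha_n,\beta_n}\|_{L^\infty}\to0$, and then the $C^1$-convergence forces $\|\nabla u_{\alpha_n,\beta_n}\|_{L^\infty(B_1)}\to 0$, contradicting the assumed lower bound $\sqrt L$. (Equivalently one could make this quantitative with an interpolation inequality between $\|u\|_{L^\infty}$ and the uniform $C^{1,\gamma}$ seminorm, but that is the same idea in disguise.) You should replace the Moser-iteration claim with this argument, and correct the misattribution of the smallness to $M(\delta)$ rather than to $\|u_{\alpha,\beta}\|_{L^\infty}$.

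A minor point: for the strict inequalities in (b), the paper does not linearize; it checks that $V_{\alpha,\beta}\in C^2$ implies $(V_7)$ with $\phi_L$ and then applies Trudinger's Harnack inequality to $\beta-u$ (resp.\ $u-\alpha$) directly (Lemma \ref{R4a}). Your maximum-principle-for-the-linearization route is plausible but would require justifying the regularity needed to linearize; the Harnack route avoids that.
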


\begin{theorem}\label{T2}
	Assume that $A$ belongs to Class 3. There is $\epsilon_0>0$ such that for each $\epsilon\in(0,\epsilon_0)$ and $L>0$ given, there exists $\delta>0$ such that for each $V_{\alpha,\beta} \in \mathcal{V}$ with $\max\{|\alpha|,|\beta|\}\in(0,\delta)$, the equation $(E)$ with $V=V_{\alpha,\beta}$ possesses a heteroclinic solution $u_{\alpha,\beta}$ from $\alpha$ to $\beta$ in $C^{1,\gamma}_{\text{loc}}(\mathbb{R}^2)$, for some $\gamma\in(0,1)$, verifying
	\begin{itemize}
		\item [(a)] $u_{\alpha,\beta}$ is 1-periodic on $y$.
		\item [(b)] $\alpha\leq u_{\alpha,\beta}(x,y)\leq \beta$ for any $(x,y)\in\mathbb{R}^2$.  
		\item [(c)] $\|\nabla u_{\alpha,\beta}\|_{L^{\infty}(\mathbb{R}^2)}\leq \sqrt{L}$.
	\end{itemize}
	Moreover, if $V_{\alpha,\beta}\in C^2(\mathbb{R},\mathbb{R})$ occurs then the inequalities in (b) are strict.
\end{theorem}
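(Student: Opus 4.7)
\emph{Proof plan.} My strategy mirrors the proof of Theorem \ref{T1}, but replaces the translation-invariance used for $x$-periodic $A$ by a comparison against the limit problem at infinity, in the spirit of Alves \cite{Alves0}. Following \cite{Alves2,Omari}, I truncate the mean curvature integrand: pick an $N$-function $\Phi_L$ that coincides with $\sqrt{1+t^2}-1$ on $[-\sqrt{L},\sqrt{L}]$ and satisfies the $\Delta_2$-condition beyond, and consider
\[
I_\epsilon(u)=\int_{\mathbb{R}\times[0,1]}\bigl[\Phi_L(|\nabla u|)+A(\epsilon x,y)V_{\alpha,\beta}(u)\bigr]\,dx\,dy
\]
on the admissible class $\Gamma_{\alpha,\beta}$ of $1$-periodic-in-$y$ profiles satisfying the heteroclinic asymptotics as $x\to\pm\infty$; set $c_\epsilon=\inf_{\Gamma_{\alpha,\beta}}I_\epsilon$. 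Since the constant $A_\infty$ trivially satisfies the hypotheses of Class 1, Theorem \ref{T1} applied to the auxiliary potential $A\equiv A_\infty$ furnishes a minimizer $u_\infty$ of the limit functional $I_\infty$ at level $c_\infty$, with $\|\nabla u_\infty\|_\infty\leq\sqrt{L}$ and $\alpha\leq u_\infty\leq\beta$.

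The cornerstone is the strict inequality $c_\epsilon<c_\infty$ for all sufficiently small $\epsilon$. Testing $I_\epsilon$ at $u_\infty$ and applying dominated convergence --- legitimate because $A\in L^\infty(\mathbb{R}^2)$ and $V_{\alpha,\beta}(u_\infty)\in L^1(\mathbb{R}\times[0,1])$ thanks to the gradient bound and the asymptotic behavior of $u_\infty$ --- yields
\[
\lim_{\epsilon\to 0^+}I_\epsilon(u_\infty)=\int_{\mathbb{R}\times[0,1]}\bigl[\Phi_L(|\nabla u_\infty|)+A(0,y)V_{\alpha,\beta}(u_\infty)\bigr]\,dx\,dy,
\]
which is strictly smaller than $c_\infty$ because $\sup_{y\in[0,1]}A(0,y)<A_\infty$ and $V_{\alpha,\beta}(u_\infty)\not\equiv 0$. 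The threshold below which this inequality holds defines the universal constant $\epsilon_0$.

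Equipped with $c_\epsilon<c_\infty$, I take a minimizing sequence $\{u_n\}\subset\Gamma_{\alpha,\beta}$, sandwich-truncate it into $[\alpha,\beta]$ as in Theorem \ref{T1} (this does not raise the energy), and extract an a.e. limit $u$ using the coercivity of $\Phi_L$. The delicate point is compactness: since $A$ is not $x$-periodic, the translation trick used for Classes 1 and 2 fails. Instead I run a concentration-compactness/splitting argument exploiting the fact that $A(\epsilon x,y)\to A_\infty$ on horizontal strips as $|x|\to\infty$: any portion of the energy escaping to $|x|=\infty$ is governed by the limit functional $I_\infty$ and therefore costs at least $c_\infty$, which together with the weak lower semicontinuity of the retained part contradicts $I_\epsilon(u_n)\to c_\epsilon<c_\infty$ unless $u\in\Gamma_{\alpha,\beta}$ and $I_\epsilon(u)=c_\epsilon$. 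Finally, the bound $\|\nabla u\|_\infty\leq\sqrt{L}$ --- obtained via Moser iteration combined with the smallness of $\delta$ dictated by $(V_4)$, exactly as in Theorem \ref{T1} --- allows me to discard the truncation and recover equation $(E)$; Lieberman's $C^{1,\gamma}_{\text{loc}}$-estimates, the construction itself, and the strong maximum principle (under $V_{\alpha,\beta}\in C^2$) yield conclusions (a), (b), (c) and the strictness of (b). I expect the compactness step to be the main obstacle: establishing $c_\epsilon<c_\infty$ and converting it into the exclusion of concentration at $|x|=\infty$ is the essential new ingredient needed to replace the translation-compactness available for Classes 1 and 2.
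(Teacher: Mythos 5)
Your overall strategy mirrors the paper's: truncate to $\Phi_L$, compare the level $c_{\epsilon,\Phi_L}$ against the limit level $c_{\infty,\Phi_L}$, use a concentration–compactness argument to produce a minimizer, and then use the gradient bound (via the compactness/contradiction argument, Lieberman's $C^{1,\gamma}$ estimate, Arzel\`a--Ascoli, and $(V_4)$) to remove the truncation. This is precisely the route through Lemma~\ref{R6}, Proposition~\ref{R7}, Theorem~\ref{T5}, and Lemma~\ref{R11}. A minor terminology slip: the gradient bound is not a Moser-iteration step (that controls $\|u\|_\infty$, not $\|\nabla u\|_\infty$); what the paper uses is Lieberman's interior $C^{1,\gamma_0}$ estimate together with Arzel\`a--Ascoli, and it is the $L^\infty$ bound on $A(\cdot)V'_{\alpha_n,\beta_n}(u_{\alpha_n,\beta_n})$ coming from $(V_4)$ that makes the estimate uniform in $n$. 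Also, the minimizer of the limit problem should be taken from Theorem~\ref{T4} applied with $\Phi=\Phi_L$ and the constant weight $A\equiv A_\infty$, rather than from Theorem~\ref{T1}.

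There is, however, a genuine gap in your proof of $\limsup_{\epsilon\to 0^+}c_\epsilon<c_\infty$. You invoke dominated convergence to pass to the limit in $\int A(\epsilon x,y)V_{\alpha,\beta}(u_\infty)\,dxdy$, citing ``$A\in L^{\infty}(\mathbb{R}^2)$''. But Class 3 does \emph{not} assume $A$ bounded: it only requires $\liminf_{|(x,y)|\to+\infty}A(x,y)=A_\infty<+\infty$, which is compatible with $A$ being unbounded, so there is no obvious dominating function. The correct way around this (and, presumably, what the paper's Lemma~\ref{R6} inherits from \cite{Alves0}) is to test $I_\epsilon$ not with $u_\infty$ itself but with a cut-off $u_\infty^R\in\Gamma_{\Phi_L}(\alpha,\beta)$ that equals $u_\infty$ on $[-R,R]\times[0,1]$, interpolates linearly to the constants $\alpha$ and $\beta$ on $[-R-1,-R]$ and $[R,R+1]$, and is constant for $|x|\geq R+1$. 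Since $V(u_\infty^R)$ is then compactly supported in $x$, the local uniform convergence $A(\epsilon x,y)\to A(0,y)$ on $[-R-1,R+1]\times[0,1]$ gives $I_\epsilon(u_\infty^R)\to\iint\bigl[\Phi_L(|\nabla u_\infty^R|)+A(0,y)V_{\alpha,\beta}(u_\infty^R)\bigr]$ as $\epsilon\to 0^+$; letting $R\to\infty$ this quantity tends to $\iint\bigl[\Phi_L(|\nabla u_\infty|)+A(0,y)V_{\alpha,\beta}(u_\infty)\bigr]<c_\infty$ because $\sup_{y}A(0,y)<A_\infty$ and $V_{\alpha,\beta}(u_\infty)\not\equiv 0$. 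Fixing $R$ large and then $\epsilon$ small yields the strict inequality uniformly, without any boundedness hypothesis on $A$.
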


Demanding a little more of the class $\mathcal{V}$, we can relax the conditions on the function $A$ to ensure the existence of a heteroclinic solution for $(E)$, as the following result says.

\begin{theorem}\label{T3}
		Assume $\alpha=-\beta$, $\epsilon=1$ and that $A$  belonging to Class 4. Then, for each $L>0$ there exists $\delta>0$ such that for each $V_{-\beta,\beta} \in \mathcal{V} \cap C^2(\mathbb{R},\mathbb{R})$ with $\beta\in(0,\delta)$ and satisfying the conditions below 
		\begin{itemize}
			\item [($V_5$)] $V_{-\beta,\beta}''(-\beta),V_{-\beta,\beta}''(\beta)>0$,
			\item[($V_6$)] $V_{-\beta,\beta}(t)=V_{-\beta,\beta}(-t)$ for all $t\in\mathbb{R}$,
		\end{itemize}
		the equation $(E)$ with $V=V_{-\beta,\beta}$ possesses a heteroclinic solution $u_{\beta}$ from $-\beta$ to $\beta$ in $C^{1,\gamma}_{\text{loc}}(\mathbb{R}^2)$, for some $\gamma\in(0,1)$, verifying the following properties:
		\begin{itemize}
			\item [(a)] $u_{\beta}(x,y)=-u_{\beta}(-x,y)$ for any $(x,y)\in\mathbb{R}^2$.
			\item [(b)] $u_{\beta}(x,y)=u_\beta(x,y+1)$ for all $(x,y)\in\mathbb{R}^2$.
			\item [(c)] $0<u_{\beta}(x,y)<\beta$ for $x>0$.  
			\item [(d)] $\|\nabla u_{\beta}\|_{L^{\infty}(\mathbb{R}^2)}\leq \sqrt{L}$.
		\end{itemize}
\end{theorem}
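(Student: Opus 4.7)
The plan is to exploit the extra symmetries assumed in this theorem, namely the evenness condition $(V_6)$ on $V_{-\beta,\beta}$ and the evenness of $A$ in $x$ (which is part of Class 4), to reduce the problem to a boundary value problem on the half-strip
$$
\Omega^+ = (0,\infty)\times(0,1)
$$
with Dirichlet datum $u(0,y)=0$ and periodicity in $y$, and then recover a solution on $\mathbb{R}^2$ by odd reflection in $x$ together with even, periodic extension in $y$. As in \cite{Alves2} and the proofs of Theorems \ref{T1} and \ref{T2}, the first step is to truncate the mean curvature Lagrangian: we replace $\sqrt{1+|\nabla u|^2}-1$ by a strictly convex $C^1$ N-function $\Phi_L$ that agrees with it for $|\nabla u|\le\sqrt{L}$ and satisfies a $\Delta_2$ condition, so the problem admits a reflexive Orlicz--Sobolev framework $W^{1,\Phi_L}_{\mathrm{loc}}$.

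Next I would introduce the admissible class
$$
\Gamma=\bigl\{\,u\in W^{1,\Phi_L}_{\mathrm{loc}}(\Omega^+)\ :\ u(0,y)=0,\ u(x,y+1)=u(x,y),\ 0\le u\le\beta,\ u(x,\cdot)\to\beta\text{ as }x\to+\infty\,\bigr\}
$$
and the functional
$$
J(u)=\int_0^{\infty}\!\!\int_0^1\bigl[\Phi_L(|\nabla u|)+A(x,y)V_{-\beta,\beta}(u)\bigr]\,dy\,dx.
$$
Finiteness of $c:=\inf_\Gamma J$ is obtained by using as a test function an odd extension (in $x$) of the one-dimensional heteroclinic of \cite{Alves2}, truncated so that it starts at $0$ at $x=0$ and reaches $\beta$ exponentially fast; the uniform boundedness of $A$ and $(V_4)$ ensure the resulting value is finite. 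For a minimizing sequence $\{u_n\}$ the $\Phi_L$-part of $J$ controls $\nabla u_n$ in a suitable Orlicz norm, so up to a subsequence $u_n\rightharpoonup u_*$ in $W^{1,\Phi_L}_{\mathrm{loc}}$ and $u_n\to u_*$ a.e.; lower semicontinuity together with Fatou give $J(u_*)\le c$.

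The main obstacle is showing that $u_*$ still lies in $\Gamma$, in particular that the asymptotic condition $u_*(x,\cdot)\to\beta$ is preserved. Since $A$ is only known to satisfy $\inf_{|x|\ge K,y\in[0,1]}A(x,y)>0$ (and can vanish on a central strip), one cannot argue by periodic comparison as in Class 1 or by the asymptotic limit as in Class 2/3. Instead, I would use $(V_5)$ and $(V_6)$ to get a quadratic lower bound $V_{-\beta,\beta}(t)\ge c_0(t-\beta)^2$ near $t=\beta$, combined with the positivity of $A$ outside $[-K,K]\times[0,1]$, to prove an $L^2$-type decay of $u_*-\beta$ on $\{x>K\}$; then local $C^{1,\gamma}$ estimates for quasilinear operators (Lieberman, Tolksdorf) upgrade this to uniform convergence. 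The constraints $0\le u_*\le\beta$ and $u_*(0,\cdot)=0$ pass to the limit by pointwise convergence and trace continuity.

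Having obtained a minimizer $u_*$, standard arguments give that $u_*$ solves the Euler--Lagrange equation of $J$ in $\Omega^+$ and, via elliptic regularity, $u_*\in C^{1,\gamma}_{\mathrm{loc}}$. The smallness of $\delta$ (hence of $\beta$) controls $J(u_*)$ by $O(\beta^2)$, which through a Caccioppoli-type estimate forces $\|\nabla u_*\|_{L^\infty}\le \sqrt{L}$; in this regime $\Phi_L$ coincides with the genuine mean curvature Lagrangian, so the truncation is removed and $u_*$ solves $(E)$ on $\Omega^+$. Using $A_2$, $A_3$ (periodicity and evenness of $A$ in $y$) together with $(V_6)$, the odd-in-$x$ extension
$$
u_\beta(x,y)=\begin{cases} u_*(x,y),& x\ge 0,\\ -u_*(-x,y),& x<0,\end{cases}
$$
further extended $1$-periodically in $y$, yields a weak, hence classical, solution of $(E)$ on $\mathbb{R}^2$ with properties (a), (b) and (d). Property (c), namely $0<u_\beta<\beta$ for $x>0$, follows from the strong maximum principle applied to the equation linearized at the constants $0$ and $\beta$, which is available thanks to $V_{-\beta,\beta}\in C^2$ and $(V_5)$.
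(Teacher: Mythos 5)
Your overall plan — truncate to $\Phi_L$, reduce to a constrained minimization, then remove the truncation for $\beta$ small — matches the paper's strategy, and working on the Dirichlet half-strip is essentially equivalent (by reflection) to the paper's full-strip odd class $\Gamma_{\Phi}^o(\beta)$. However, two steps in your argument have genuine gaps.

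First and most seriously, the gradient bound. You write that $J(u_*)=O(\beta^2)$ ``through a Caccioppoli-type estimate forces $\|\nabla u_*\|_{L^\infty}\le\sqrt L$.'' A Caccioppoli estimate controls $\nabla u_*$ in an integral (Orlicz or $L^2$) norm, not in $L^\infty$; smallness of the total energy does not, by itself, give a pointwise gradient bound — a small bump can have arbitrarily large slope. The paper's actual mechanism (Lemmas \ref{R10}, \ref{R12}) is a compactness/contradiction argument: suppose there are $\beta_n\to0$, $z_n\in\mathbb{R}^2$, with $\|u_{\beta_n}\|_{C^1(B_1(z_n))}\ge\sqrt L$. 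Translate so $z_n=0$; then hypothesis $(V_4)$ (built into membership of $\mathcal V$) gives a uniform bound on the right-hand side $A\,V'_{-\beta_n,\beta_n}(u_{\beta_n})$, hence uniform $C^{1,\gamma_0}$ estimates from Lieberman's regularity theorem, hence Arzelà--Ascoli yields $C^1$ convergence on $B_1(0)$. Since $|u_{\beta_n}|\le\beta_n\to0$ the limit is $0$, contradicting the assumption. Your proposal never invokes $(V_4)$ and substitutes for this argument an energy inequality that does not deliver the pointwise conclusion.

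Second, you pass from the constrained minimizer $u_*$ to ``$u_*$ solves the Euler--Lagrange equation of $J$ in $\Omega^+$'' by ``standard arguments,'' but the class you minimize over carries both a unilateral constraint $0\le u\le\beta$ and a boundary (or oddness) constraint, so the first variation only yields a variational inequality. The paper resolves this in the proof of Theorem \ref{T6}: for a general test function $\psi$, decompose $\psi=\psi_o+\psi_e$ into odd and even parts in $x$, show that the odd perturbation $u+t\psi_o$ can be projected into $\Gamma_{\Phi}^o(\beta)$ (using $(V_6)$) without increasing the energy, and exploit the oddness of the integrands $\phi(|\nabla(u+t\psi_o)|)\nabla u\cdot\nabla\psi_e$ etc.\ to kill the cross terms. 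This yields $I'(u)\psi\ge0$ for all $\psi$, hence equality. Some version of this needs to appear in the half-strip picture as well (the trace condition $u(0,\cdot)=0$ is a genuine constraint and the variational inequality $\ge 0$ must be upgraded to $=0$); your proposal leaves this out entirely. Filling these two gaps would essentially reproduce the argument in the paper.
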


The reader is invited to see that the above theorems are true for the Ginzburg-Landau \eqref{v1} and Sine-Gordon \eqref{v2} potentials when roots $\alpha$ and $\beta$ have a small distance between them.

Motivated by the ideas in \cite{Alves2}, in the proof of the theorems above, we truncate the differential operator involved in $(E)$ in such a way that the new operator can be seen as a quasilinear operator in divergence form. For this reason, as a first step in the present work, we study quasilinear equations of the form
$$
-\Delta_{\Phi}u+A(\epsilon x,y)V'(u)=0~~~\text{in}~~~\mathbb{R}^2,\eqno{(PDE)}
$$
where $\Delta_{\Phi}u=\text{div}(\phi(|\nabla u|)\nabla u)$ and $\Phi:\mathbb{R}\rightarrow [0,+\infty)$ is an $N$-function of the type
\begin{equation}\label{*}
	\Phi(t)=\int_0^{|t|}s\phi(s)ds,	
\end{equation}
where $\phi:(0,+\infty)\rightarrow (0,+\infty)$ is a $C^1$ function verifying the following conditions:
\begin{itemize}
	\item[($\phi_1$)] $\phi(t)>0$ and $(\phi(t)t)'>0$ for any $t>0$.
	\item[($\phi_2$)] There are $l,m\in\mathbb{R}$ with $1<l\leq m$ such that 
	$$
	l-1\leq \dfrac{(\phi(t)t)'}{\phi(t)}\leq m-1,~~\forall t>0.
	$$
	\item[($\phi_3$)] There exist constants $c_1,c_2,\delta>0$ and $q>1$ satisfying
	$$
	c_1t^{q-1}\leq \phi(t)t\leq c_2t^{q-1},~~t\in(0,\delta].
	$$	
\end{itemize}
The solutions of $(PDE)$ are found as minima of the action functional
$$
I(w)=\sum_{j\in\mathbb{Z}}\left(\int_0^1\int_j^{j+1}\left(\Phi(|\nabla w|)+A(\epsilon x,y)V(w)\right)dxdy\right)
$$
on the class of admissible functions
\begin{equation*}
	\Gamma_{\Phi}(\alpha,\beta)\!=\!\left\{w \!\in\! W^{1,\Phi}_{\text{loc}}(\mathbb{R}\times(0,1))\!:\!\tau_kw\to \!\alpha~(\beta)\!\text{ in }\! L^\Phi((0,1)\times(0,1))\!\text{ as }\! k\to\!-\infty~(+\infty)\right\},
\end{equation*}
where $W^{1,\Phi}_{\text{loc}}(\mathbb{R}\times(0,1))$ denotes the usual Orlicz-Sobolev space. We would like to point out that in the study of quasilinear elliptic problems driven by the $\Phi$-Laplacian operator the conditions $(\phi_1)$-$(\phi_2)$ are well-known and guarantee that $\Phi$ is an $N$-function that checks the so called $\Delta_2$-condition (see for instance Appendix \ref{A}). Those conditions ensure that the Orlicz-Sobolev space is reflexive and separable. Moreover, in this work, the conditions $(\phi_1)$-$(\phi_2)$ are enough to show the existence of heteroclinic solution, while assumption $(\phi_3)$ is used to get more information about the behavior of the solution, because it permits to apply a Harnack type inequality found in Trudinger \cite{Trudinger}.

Our results involving the quasilinear problem $(PDE)$ are stated below 

\begin{theorem}\label{T4}
Assume $(\phi_1)$-$(\phi_2)$, $(V_1)$-$(V_3)$, $\epsilon=1$ and that $A$ belongs to Class 1 or 2. Then equation $(PDE)$ has a heteroclinic solution from $\alpha$ to $\beta$ in $C^{1,\gamma}_{\text{loc}}(\mathbb{R}^2)$ for some $\gamma\in(0,1)$ such that 
\begin{itemize}
	\item[(a)] $u(x,y)=u(x,y+1)$ for any $(x,y)\in\mathbb{R}^2$.
	\item[(b)] $\alpha\leq u(x,y)\leq\beta$ for all $(x,y)\in\mathbb{R}^2$.
\end{itemize}
Moreover, taking into account the assumptions $(\phi_3)$ and 
\begin{itemize}
	\item[($V_7$)] There are $d_1,d_2,d_3,d_4>0$ and $\lambda>0$ such that 
	$$
	|V'(t)|\leq d_1\phi(d_2|t-\beta|)|t-\beta|\text{ for all }t\in[\beta-\lambda,\beta+\lambda]
	$$
	and 
	$$
	|V'(t)|\leq d_3\phi(d_4|t-\alpha|)|t-\alpha|\text{ for all }t\in[\alpha-\lambda,\alpha+\lambda],
	$$
\end{itemize}
then the inequalities in (b) are strict. 
\end{theorem}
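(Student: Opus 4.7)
The plan is to realize the heteroclinic as a minimizer of the multi-cell action $I$ on the admissible class $\Gamma_{\Phi}(\alpha,\beta)$ of functions on the infinite strip $\mathbb{R}\times(0,1)$. To see $\Gamma_{\Phi}(\alpha,\beta)\neq\emptyset$ I would take an explicit interpolation $w_0$ equal to $\alpha$ for $x\le 0$, to $\beta$ for $x\ge 1$, and linear between; the $\Delta_{2}$-condition provided by $(\phi_{1})$--$(\phi_{2})$ together with continuity of $V$ and $A$ yields $I(w_{0})<+\infty$, so $c:=\inf_{\Gamma_{\Phi}(\alpha,\beta)}I$ is finite. The truncation $Tw:=\max\{\alpha,\min\{w,\beta\}\}$ belongs to $\Gamma_{\Phi}(\alpha,\beta)$ and satisfies $I(Tw)\le I(w)$, since $V\ge 0$, $V(\alpha)=V(\beta)=0$, and $|\nabla(Tw)|\le|\nabla w|$ almost everywhere. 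Hence a minimizing sequence $(u_{n})$ may be fixed with $\alpha\le u_{n}\le\beta$, and the energy bound $I(u_{n})\le c+1$ translates (through $\Delta_{2}$) into a uniform $W^{1,\Phi}$-bound of $u_{n}$ on every cylinder $[-N,N]\times(0,1)$.

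Compactness of $(u_{n})$ is the crux. In Class 1 the functional is invariant under integer translations in $x$, which I would handle in the spirit of Rabinowitz by shifting $u_{n}$ by an integer $k_{n}$ so that a scalar like $\int_{0}^{1}\!\int_{0}^{1}|u_{n}-\tfrac{\alpha+\beta}{2}|\,dx\,dy$ lies in a fixed compact subinterval of $(0,\beta-\alpha)$; this is achievable because $u_{n}$ must transition from near $\alpha$ to near $\beta$. In Class 2 I would instead prove the strict inequality $c<c_{p}$, where $c_{p}$ is the infimum of the periodic functional $I_{p}$ obtained by replacing $A$ with $A_{p}$: any near-minimizer $w$ of $I_{p}$ satisfies $V(w)>0$ on an open set (by $(V_{3})$ since $w$ is not constant), so the strict pointwise inequality $A<A_{p}$ gives $I(w)<I_{p}(w)$, whence $c<c_{p}$; this energy gap forbids minimizing sequences from drifting to infinity, where $A\to A_{p}$. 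With $(u_{n})$ stabilized, reflexivity and separability of $W^{1,\Phi}_{\text{loc}}$ together with a diagonal extraction yield $u_{n}\rightharpoonup u$ in $W^{1,\Phi}_{\text{loc}}(\mathbb{R}\times(0,1))$, with $\alpha\le u\le\beta$; lower semicontinuity of the convex integrand $\int\Phi(|\nabla\cdot|)$ and Fatou on the non-negative term $A\,V(\cdot)$ give $I(u)\le c$.

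It remains to check that $u\in\Gamma_{\Phi}(\alpha,\beta)$. Since $I(u)<+\infty$ and $A\ge A_{0}>0$, the tails $\int_{0}^{1}\!\int_{j}^{j+1}V(u)\,dx\,dy\to 0$ as $|j|\to\infty$; combined with the corresponding gradient tails and the fact that $V^{-1}(0)=\{\alpha,\beta\}$, a Poincar\'e-type argument on unit cells forces $\tau_{j}u$ to converge in $L^{\Phi}$ to one of $\alpha,\beta$, and the normalization (Class 1) or the gap $c<c_{p}$ (Class 2) identifies $\alpha$ on the left and $\beta$ on the right. Thus $u$ minimizes $I$ and solves $(PDE)$ weakly on the strip. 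Property (a) follows by extending $u$ across $y=0$ and $y=1$ using $(A_{2})$--$(A_{3})$, and standard regularity for $\Phi$-Laplace type equations under $(\phi_{1})$--$(\phi_{2})$ upgrades the extended $u$ to $C^{1,\gamma}_{\text{loc}}(\mathbb{R}^{2})$.

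Finally, under $(\phi_{3})$ and $(V_{7})$, the bound $|V'(t)|\le d_{3}\,\phi(d_{4}|t-\alpha|)|t-\alpha|$ near $\alpha$ recasts $\Delta_{\Phi}u=AV'(u)$ in a form to which Trudinger's Harnack inequality \cite{Trudinger} applies, its use being enabled precisely by $(\phi_{3})$. A strong-maximum-principle argument then rules out interior contact $u(x_{0},y_{0})=\alpha$: it would force $u\equiv\alpha$ on a connected component, contradicting the heteroclinic limit at $+\infty$. The analogous argument near $\beta$ yields $u<\beta$, producing the strict inequalities in (b). I expect the main technical obstacle to be the compactness/localization step, i.e., preventing the minimizing sequence from drifting to infinity in the $x$-direction, since the translation invariance (Class 1) and the lack of strict periodicity (Class 2) each require a tailored normalization or energy-gap argument.
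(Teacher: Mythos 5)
Your overall strategy---minimize the layered action $I$ on $\Gamma_\Phi(\alpha,\beta)$, truncate the minimizing sequence so that $\alpha\le u_n\le\beta$, normalize by integer translations in the $x$-variable for Class~1, establish the strict gap $c_\Phi(\alpha,\beta)<c_{p,\Phi}(\alpha,\beta)$ for Class~2, pass to a weak limit, enforce period-1 matching in $y$ by the $A(x,y)=A(x,-y)$ symmetry, and then invoke Lieberman regularity and Trudinger's Harnack inequality for the strict bounds---is exactly the route the paper takes.

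There is, however, a genuine gap in the Class~1 compactness step. You write that ``the normalization (Class 1) \dots identifies $\alpha$ on the left and $\beta$ on the right.'' The normalization (shifting so that $\iint_{\Omega_0}\Phi(|u_n-\alpha|)>\delta$ while $\iint_{\Omega_{k-1}}\Phi(|u_n-\alpha|)\le\delta$ for $k\le 0$) does fix the behavior at $-\infty$, but by itself it does \emph{not} exclude $\tau_k u\to\alpha$ as $k\to+\infty$: the weak limit could rise partway toward $\beta$ near $j=0$ and then fall back to $\alpha$, and such a ``bump'' profile is fully compatible with your chosen normalization. The paper rules this out by the cut-and-paste surgery in Proposition~\ref{R2} (Steps~1--2): one derives a uniform lower bound $a_{-1}(u_n)+a_0(u_n)\ge\epsilon_0$ from the normalization, and then, assuming the limit at $+\infty$ is $\alpha$, glues the constant $\alpha$ to a translated tail of $u_n$ across a linear interpolation collar to produce a competitor in $\Gamma_\Phi(\alpha,\beta)$ whose energy is strictly below $c_\Phi(\alpha,\beta)$, a contradiction. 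Without this construction the identification of $\beta$ at $+\infty$ is unjustified. A smaller but related omission: to ``extend $u$ across $y=0$ and $y=1$'' you first need $u(x,0)=u(x,1)$; the minimizer obtained from the compactness step does not automatically satisfy this, and the paper secures it by the half-strip reflection comparison of Lemma~\ref{R3} (compare $I$ on $\mathbb{R}\times(0,\tfrac12)$ and $\mathbb{R}\times(\tfrac12,1)$ and reflect the smaller half).
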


\begin{theorem}\label{T5}
	Assume $(\phi_1)$-$(\phi_2)$, $(V_1)$-$(V_3)$ and that $A$ belongs to Class 3. Then there is a constant $\epsilon_0>0$ such that for each $\epsilon\in(0,\epsilon_0)$ equation $(PDE)$ has a heteroclinic solution from $\alpha$ to $\beta$ in $C^{1,\gamma}_{\text{loc}}(\mathbb{R}^2)$ for some $\gamma\in(0,1)$ such that 
	\begin{itemize}
		\item[(a)] $u(x,y)=u(x,y+1)$ for any $(x,y)\in\mathbb{R}^2$.
		\item[(b)] $\alpha\leq u(x,y)\leq\beta$ for all $(x,y)\in\mathbb{R}^2$.
	\end{itemize}
	Moreover, assuming $(\phi_3)$ and $(V_7)$ we have that the inequalities in (b) are strict. 
\end{theorem}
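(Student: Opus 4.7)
The plan is to follow the variational framework of Theorem \ref{T4}, minimizing
$$I_\epsilon(w) = \sum_{j \in \mathbb{Z}} \int_0^1 \int_j^{j+1} \bigl(\Phi(|\nabla w|) + A(\epsilon x, y) V(w)\bigr)\, dx\, dy$$
over $\Gamma_\Phi(\alpha, \beta)$ and setting $c_\epsilon = \inf_{\Gamma_\Phi(\alpha, \beta)} I_\epsilon$. The new difficulty compared with Theorem \ref{T4} is that a function in Class 3 need not be periodic in $x$, so the $\mathbb{Z}$-translation invariance that supplied compactness for Classes 1 and 2 is lost. The substitute will be the strict inequality $\sup_{y \in [0,1]} A(0,y) < A_\infty$, combined with smallness of $\epsilon$, which together rule out loss of mass at infinity.

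First, I would compare $c_\epsilon$ with the energy $c_\infty$ of the constant-coefficient limit problem. Since $A \equiv A_\infty$ falls under Class 1, Theorem \ref{T4} produces a heteroclinic minimizer $u_\infty$ with $c_\infty := I_\infty(u_\infty) < \infty$; the bound $A_\infty > 0$ then forces $V(u_\infty) \in L^1(\mathbb{R} \times (0,1))$, while $(V_3)$ gives $\int V(u_\infty) > 0$ because $u_\infty$ takes values in $(\alpha,\beta)$. Using $A \in L^\infty(\mathbb{R}^2)$ (which follows from continuity of $A$ and $A(x,y) \to A_\infty$ at infinity) together with continuity of $A$ at $(0,y)$, dominated convergence yields
$$\lim_{\epsilon \to 0^+} I_\epsilon(u_\infty) = \int_0^1 \int_\mathbb{R} \bigl(\Phi(|\nabla u_\infty|) + A(0, y) V(u_\infty)\bigr)\, dx\, dy < c_\infty,$$
the strict inequality coming from $A(0,y) \le \sup_{y \in [0,1]} A(0,y) < A_\infty$. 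Hence there exists $\epsilon_0 > 0$ such that $c_\epsilon < c_\infty$ for every $\epsilon \in (0,\epsilon_0)$.

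Fix such an $\epsilon$ and pick a minimizing sequence $\{u_n\} \subset \Gamma_\Phi(\alpha, \beta)$ for $c_\epsilon$. A standard truncation at levels $\alpha$ and $\beta$ (which does not increase the energy, using $V \ge 0$ and $V(\alpha)=V(\beta)=0$) lets me assume $\alpha \le u_n \le \beta$, and the coercivity coming from $A \ge A_0$ bounds $\sum_j \int_0^1 \int_j^{j+1} \Phi(|\nabla u_n|)\,dx\,dy$ and $\int_0^1 \int_\mathbb{R} V(u_n)\,dx\,dy$ uniformly in $n$. For each $n$ let $j_n \in \mathbb{Z}$ be the smallest integer with
$$\int_0^1 \int_{-\infty}^{j_n} V(u_n)\,dx\,dy \ge \tfrac{1}{2} \int_0^1 \int_{\mathbb{R}} V(u_n)\,dx\,dy,$$
which serves as a transition center for $u_n$. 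The crucial concentration-compactness step is to rule out $|j_n| \to \infty$: otherwise $\epsilon j_n \to \pm\infty$, and the uniform convergence $A(\epsilon x, y) \to A_\infty$ on the region carrying the bulk of the transition would force $\liminf_n I_\epsilon(u_n) \ge c_\infty$, contradicting $c_\epsilon < c_\infty$. Hence $\{j_n\}$ is bounded, so a subsequence of $\{u_n\}$ converges weakly in $W^{1,\Phi}_{\mathrm{loc}}(\mathbb{R}^2)$ and strongly in $L^1_{\mathrm{loc}}$ to some $u$; the pinning of $j_n$ together with the integrability of $V(u)$ forces $u \in \Gamma_\Phi(\alpha,\beta)$, and lower semicontinuity gives $I_\epsilon(u) \le c_\epsilon$, so $u$ is a minimizer.

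Once a minimizer is in hand, the Euler--Lagrange equation shows that $u$ is a weak solution of $(PDE)$, and the $C^{1,\gamma}_{\mathrm{loc}}$ regularity together with (a) and (b) follow exactly as in Theorem \ref{T4}, using reflection and periodicity in $y$ and comparison with the constants $\alpha,\beta$. Under the extra hypotheses $(\phi_3)$ and $(V_7)$, the strict inequalities in (b) are obtained via Trudinger's Harnack inequality \cite{Trudinger}, exactly as in Theorem \ref{T4}. The hardest step will be the concentration-compactness argument that pins the transition region: turning the intuitive statement ``$c_\epsilon < c_\infty$ forbids drift'' into a rigorous estimate requires a careful localization of the functional around $j_n$ together with uniform control of $A(\epsilon\,\cdot\,, \cdot) - A_\infty$ on the localized window, which is where the parameter $\epsilon$ and the gap $A_\infty - \sup_{y} A(0,y)$ must be balanced.
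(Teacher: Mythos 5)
Your high-level strategy coincides with the paper's: both approaches reduce everything to the strict energy gap $c_{\epsilon,\Phi}(\alpha,\beta)<c_{\infty,\Phi}(\alpha,\beta)$ for small $\epsilon$ (the paper's Lemma \ref{R6}), use it to prevent drift of the minimizing sequence, and then finish with reflection in $y$, the Euler--Lagrange equation, Lieberman regularity, and the Harnack inequality exactly as in Theorem \ref{T4}. Your self-contained dominated-convergence argument for the gap is in the right spirit, but it slips in the hypothesis $A\in L^\infty(\mathbb{R}^2)$, which you assert ``follows from continuity of $A$ and $A(x,y)\to A_\infty$ at infinity.'' Class 3 only asserts $\liminf_{|(x,y)|\to\infty}A(x,y)=A_\infty<+\infty$; it gives no upper bound on $A$ and certainly no full limit, so a dominating function for $A(\epsilon x,y)V(u_\infty)$ over the whole strip is not available for free. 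One would have to either add boundedness of $A$ as a hypothesis or split $\mathbb{R}\times(0,1)$ into a compact core (where $A(\epsilon\cdot,\cdot)\to A(0,\cdot)$ uniformly) and a tail handled via decay of $V(u_\infty)$.

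The more substantive gap is in the concentration step, and it is also where your argument genuinely diverges from the paper's. The paper's Proposition \ref{R7} does not pin a transition centre; it takes the weak limit $u_\epsilon$ of the minimizing sequence directly, examines the translated limits $\tau_k u_\epsilon\rightharpoonup u_\epsilon^*$ (which must equal $\alpha$ or $\beta$ a.e.\ on $\Omega_0$), and, if $u_\epsilon^*=\alpha$, uses a first-exit-time normalization $(k_i)$ and a diagonal subsequence to extract a \emph{relocated} competitor $w_\epsilon\in\Gamma_\Phi(\alpha,\beta)$ for the $A_\infty$-problem with $I_\infty(w_\epsilon)\le c_{\epsilon,\Phi}(\alpha,\beta)$, contradicting the gap. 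Your argument instead defines a median $j_n$, claims $\{j_n\}$ is bounded, and concludes ``the pinning of $j_n$ together with the integrability of $V(u)$ forces $u\in\Gamma_\Phi(\alpha,\beta)$.'' This last implication does not hold as stated: integrability of $V(u)$ (together with $\alpha\le u\le\beta$) only shows that $\tau_k u$ converges to $\alpha$ or to $\beta$ in each direction, and boundedness of the median does not rule out the weak limit being the constant $\alpha$ or $\beta$ (for which $V(u)\equiv 0$ is certainly integrable and $j_n$ gives no information). What actually rules this out in the paper is the $\delta$-normalization of the sequence combined with the Step 1/Step 2 machinery that produces a definite lower bound on $a_{-1}(u_n)+a_0(u_n)$. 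You have identified the right obstacle --- you even call it ``the hardest step'' --- but the median-based pinning does not by itself close it; some version of the first-exit/glueing construction is needed.
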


\begin{theorem}\label{T6}
	Assume $(\phi_1)$-$(\phi_2)$, $(V_1)$-$(V_3)$ and $(V_6)$ with $\alpha=-\beta$, $\epsilon=1$ and that $A$ belongs to Class 4. Also consider the following assumption
	\begin{itemize}
		\item[($V_8$)] There are $\mu>0$ and $\theta\in(0,\beta)$ such that 
		$$
		\mu\Phi(|t-\beta|)\leq V(t),~~\forall t\in(\beta-\theta,\beta+\theta).
		$$
	\end{itemize}
	Then equation $(PDE)$ possesses a heteroclinic solution $u$ from $-\beta$ to $\beta$ in $C^{1,\gamma}_{\text{loc}}(\mathbb{R}^2)$ for some $\gamma\in(0,1)$ such that
	\begin{itemize}
		\item [(a)] $u(x,y)=-u(-x,y)$ for any $(x,y)\in\mathbb{R}^2$.
		\item [(b)] $u(x,y)=u(x,y+1)$ for all $(x,y)\in\mathbb{R}^2$.
		\item [(c)] $0\leq u(x,y)\leq \beta$ for any $x>0$ and $y\in\mathbb{R}$.  
	\end{itemize}
	Moreover, if $(\phi_3)$ and $(V_7)$ occur then the inequalities in (c) are strict.
\end{theorem}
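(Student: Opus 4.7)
The plan is to minimize the action functional $I$ over the symmetric subclass
\begin{equation*}
\Gamma_\Phi^{\text{sym}}(-\beta,\beta)=\bigl\{w\in\Gamma_\Phi(-\beta,\beta):w(-x,y)=-w(x,y)\text{ and }w(x,y)=w(x,y+1)\text{ a.e.}\bigr\},
\end{equation*}
so that properties (a) and (b) are hard-wired into the admissible class. By $(V_6)$ a smooth odd $1$-periodic interpolation from $-\beta$ to $\beta$ in $x$ lies in $\Gamma_\Phi^{\text{sym}}$, whence $c^*:=\inf_{\Gamma_\Phi^{\text{sym}}}I<+\infty$. The decisive observation is that every $w\in\Gamma_\Phi^{\text{sym}}$ satisfies $w(0,y)=0$ a.e.\ in $y$, an anchor that breaks the $x$-translation invariance which, combined with the possible vanishing of $A$ on a bounded $x$-strip allowed by Class 4, would otherwise let a minimizing sequence escape to infinity.

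I would then take a minimizing sequence $(u_n)\subset\Gamma_\Phi^{\text{sym}}$. The uniform bound on $\int\Phi(|\nabla u_n|)\,dxdy$ together with reflexivity of $W^{1,\Phi}_{\text{loc}}$ under $(\phi_1)$-$(\phi_2)$ yields, up to a subsequence, $u_n\rightharpoonup u$ weakly in $W^{1,\Phi}_{\text{loc}}(\mathbb{R}^2)$ and $u_n\to u$ a.e., with $u$ inheriting both symmetries. Weak lower semicontinuity of $I$ gives $I(u)\le c^*$. The main step is to show that $u\in\Gamma_\Phi^{\text{sym}}(-\beta,\beta)$, i.e.\ $u(\cdot,y)\to\pm\beta$ as $x\to\pm\infty$. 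On the half-strips $\{|x|\ge K\}\times[0,1]$ the Class 4 lower bound $\inf A>0$, combined with $\int A(\epsilon x,y)V(u)<\infty$ and $(V_3)$, forces $u$ to approach the zero set $\{-\beta,\beta\}$ of $V$ at infinity, while the anchor $u(0,y)=0$ plus local $W^{1,\Phi}$-control prevents uncontrolled oscillation inside $\{|x|<K\}$; oddness then selects $+\beta$ on the right and $-\beta$ on the left. Once $u\in\Gamma_\Phi^{\text{sym}}$, minimality follows, and $u$ is promoted to a weak solution of $(PDE)$ by decomposing any test function $\varphi\in C_c^\infty(\mathbb{R}^2)$ into its odd and even parts in $x$: because $A$ is even in $x$ and $V'$ is odd by $(V_6)$, both $I$ and the equation are invariant under $u(x,y)\mapsto -u(-x,y)$, so stationarity of $I$ at $u$ in the symmetric class extends to the full class.

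For property (c) I would first replace $u$ by the function equal to $|u(x,y)|$ on $\{x>0\}$ and to $-|u(-x,y)|$ on $\{x<0\}$; by $(V_6)$ and the identity $|\nabla |u||=|\nabla u|$ a.e., this competitor lies in $\Gamma_\Phi^{\text{sym}}$ with the same energy as $u$, so we may assume $u\ge 0$ on $\{x>0\}$. Then the further competitor $\min(u,\beta)$ on $\{x>0\}$ (extended oddly) has energy no larger by $(V_3)$, and minimality forces $u\le\beta$ on $\{x>0\}$. Regularity $C^{1,\gamma}_{\text{loc}}$ comes from standard Lieberman-type estimates for the $\Phi$-Laplacian under $(\phi_1)$-$(\phi_2)$, and under $(\phi_3)$-$(V_7)$ the strict inequalities in (c) follow from the Trudinger Harnack inequality applied to $v=\beta-u$ and $v=u$ on $\{x>0\}$, using $(V_8)$ to match the decay rate of $V$ near $\beta$ with $\Phi$ and to absorb the nonlinear term. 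The hardest step is the asymptotic convergence $u(x,\cdot)\to\pm\beta$ as $x\to\pm\infty$: because $A$ may vanish on a bounded $x$-strip, a purely local-positivity argument is unavailable there, and one must combine the positivity of $A$ at infinity (Class 4) with the symmetry anchor $u(0,y)=0$ to propagate control from the axis out to the far field.
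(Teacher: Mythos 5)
Your admissible class $\Gamma_\Phi^{\text{sym}}$ drops the pointwise constraint $0\le w\le\beta$ for $x\ge 0$ that the paper builds into $\Gamma_\Phi^o(\beta)$, and this creates a genuine gap in the compactness step, i.e.\ in proving that the weak limit $u$ of a minimizing sequence satisfies $\tau_k u\to\beta$ in $L^\Phi(\Omega_0)$ as $k\to+\infty$. In the paper's Lemma~\ref{R8} that constraint is used decisively: since $0\le u\le\beta$ on $\{x\ge 0\}$, hypothesis $(V_8)$ together with $(V_3)$ and compactness yields a uniform bound $V(t)\ge\tilde\mu\,\Phi(|t-\beta|)$ for all $t\in[0,\beta]$; so if $\iint_{\Omega_0}\Phi(|\tau_{k_i}u-\beta|)\,dxdy\ge\epsilon$ along some $k_i\to+\infty$, the Class~4 positivity $A\ge a_0>0$ for $|x|\ge K$ gives $a_{k_i}(u)\ge a_0\tilde\mu\epsilon$ for all large $i$, whence $I(u)=+\infty$, a contradiction. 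Without the sign constraint the inequality $V(t)\ge\tilde\mu\Phi(|t-\beta|)$ is simply false for $t$ near $-\beta$, where $V$ vanishes while $\Phi(|t-\beta|)\approx\Phi(2\beta)$ does not, and this argument collapses. Your replacement reasoning — that the energy bound pushes $\tau_k u$ into $\{-\beta,\beta\}$ and ``oddness then selects $+\beta$ on the right'' — is not correct: oddness only relates the limit at $+\infty$ to that at $-\infty$, and neither picks out which of $\pm\beta$ the subsequential limits equal nor rules out $\tau_k u$ oscillating between them. Likewise, the anchor $u(0,y)=0$ is indeed why no translation renormalization of the minimizing sequence is needed (a correct and useful observation), but it exerts no control on the far-field sign. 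In short, $(V_8)$ must enter precisely in this compactness step, not only in the Harnack argument as you indicate; the fix is to truncate the minimizing sequence onto $[0,\beta]$ on $\{x>0\}$ (extended oddly) \emph{before} passing to the limit, which is exactly what the paper's class $\Gamma_\Phi^o(\beta)$ encodes.

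Once the asymptotics are secured in this way, your symmetric-criticality route to the Euler--Lagrange equation is a valid alternative to the paper's and arguably more transparent. The paper reaches stationarity at the constrained minimizer $u$ by constructing a ``folded'' competitor $\tilde\varphi_t$ that projects $u+t\psi_o$ back into the constraint set, deriving $I(u+t\psi_o)\ge I(\tilde\varphi_t)\ge c_\Phi^o(\beta)=I(u)$, and then using the convexity inequality of Lemma~\ref{A5}(b) together with odd/even cancellations in $x$ to handle $\psi_e$. Your observation that the two truncations (by $|\cdot|$ and by $\min(\cdot,\beta)$) never increase $I$ already shows $\inf_{\Gamma_\Phi^o}I=\inf_{\Gamma_\Phi^{\text{sym}}}I$, so the constrained minimizer is also an unconstrained minimizer over the odd class; then $\frac{d}{dt}I(u+t\psi_o)\big|_{t=0}=0$ is immediate, and $\frac{d}{dt}I(u+t\psi_e)\big|_{t=0}=0$ follows from the invariance $I(v)=I(-v(-\cdot,\cdot))$, which forces $I(u+t\psi_e)=I(u-t\psi_e)$ for all $t$. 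This bypasses the folding construction entirely, at the price of leaning harder on the clean $\mathbb{Z}_2$-symmetry than the paper's more hands-on argument does.
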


Here it is worth mentioning that an example of potential $V$ that satisfies the conditions $(V_1)$-$(V_3)$ and $(V_6)$-$(V_8)$ is given by
\begin{equation}\label{v3}
V(t)=\Phi(|(t-\alpha)(t-\beta)|)~\text{ for all }~t\in\mathbb{R},
\end{equation}
where $\Phi$ is an $N$-function of the type \eqref{*} verifying $(\phi_1)$-$(\phi_2)$, which was used in \cite{Alves3}. The reader is invited to see that the condition $(V_7)$ is not necessary to prove the existence of heteroclinic solution from $\alpha$ to $\beta$ for $(PDE)$, however it together with $(\phi_3)$ are used to obtain more information about the behavior of the heteroclinic solution. Moreover, the classical case $\Phi(t)=\frac{t^2}{2}$ corresponds to the Laplacian operator, and in this case, as we are considering a new class of functions $A$, we can rewrite Theorem \ref{T6} as follows

\begin{theorem}\label{T7}
	Assume $V\in C^2(\mathbb{R},\mathbb{R})$,  $(V_2)$-$(V_3)$ and $(V_5)$-$(V_6)$ with $\alpha=-\beta$, and that $A$ belongs to Class 4. Then equation \eqref{E1} with $\Omega=\mathbb{R}^2$ possesses a heteroclinic (classical) solution $u$ from $-\beta$ to $\beta$ such that
	\begin{itemize}
		\item [(a)] $u(x,y)=-u(-x,y)$ for any $(x,y)\in\mathbb{R}^2$.
		\item [(b)] $u(x,y)=u(x,y+1)$ for all $(x,y)\in\mathbb{R}^2$.
		\item [(c)] $0<u(x,y)< \beta$ for any $x>0$ and $y\in\mathbb{R}$.  
	\end{itemize}
\end{theorem}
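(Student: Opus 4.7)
The plan is to deduce Theorem \ref{T7} from Theorem \ref{T6} by specializing to the Laplacian case $\Phi(t)=t^2/2$ and then upgrading the regularity of the resulting solution to the classical level.

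\textbf{Step 1: Recover the Laplacian structure from $(\phi_1)$--$(\phi_3)$.} Choosing $\Phi(t)=t^2/2$, we have $\phi(s)\equiv 1$, so that $\Delta_\Phi=\Delta$ and $(PDE)$ reduces to \eqref{E1}. Direct inspection gives $(\phi_1)$; $(\phi_2)$ holds with $l=m=2$; and $(\phi_3)$ holds with $q=2$, $c_1=c_2=1$, any $\delta>0$.

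\textbf{Step 2: Derive $(V_7)$ and $(V_8)$ from the $C^2$ assumptions.} By $(V_2)$--$(V_3)$ the point $\beta$ is a global minimum of $V$, so $V'(\beta)=0$; by $(V_5)$ we have $V''(\beta)>0$, hence there exist $\lambda,\mu>0$ such that $V''(t)\geq 2\mu$ on $[\beta-\lambda,\beta+\lambda]$. Taylor's theorem then yields, for $t$ in this interval,
\[
V(t)=\tfrac12 V''(\xi)(t-\beta)^2\geq \mu(t-\beta)^2= 2\mu\,\Phi(|t-\beta|),
\]
so $(V_8)$ holds, and
\[
|V'(t)|=|V'(t)-V'(\beta)|\leq \Big(\sup_{[\beta-\lambda,\beta+\lambda]}|V''|\Big)|t-\beta|,
\]
which, since $\phi\equiv 1$, is precisely $(V_7)$ near $\beta$. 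The analogous estimates at $\alpha=-\beta$ follow from the even symmetry $(V_6)$.

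\textbf{Step 3: Invoke Theorem \ref{T6}.} All hypotheses of Theorem \ref{T6} are in force: $(\phi_1)$--$(\phi_3)$, $(V_1)$--$(V_3)$ (with $V\in C^2\subset C^1$), $(V_6)$--$(V_8)$, $\epsilon=1$, and $A$ in Class 4. Theorem \ref{T6} therefore produces a heteroclinic $u\in C^{1,\gamma}_{\text{loc}}(\mathbb{R}^2)$ from $-\beta$ to $\beta$ satisfying the symmetry (a), the periodicity (b), and the strict version of (c) of Theorem \ref{T7}.

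\textbf{Step 4: Upgrade to classical solution.} The function $u$ solves $-\Delta u = -A(x,y)V'(u)$ in the weak sense on $\mathbb{R}^2$. Since $u$ is bounded and $V'$ is continuous, $f:=-A(x,y)V'(u)$ lies in $L^\infty_{\text{loc}}(\mathbb{R}^2)\cap C^0(\mathbb{R}^2)$. Calder\'on--Zygmund estimates give $u\in W^{2,p}_{\text{loc}}(\mathbb{R}^2)$ for every $p<\infty$, and Sobolev embedding promotes this to $u\in C^{1,\gamma}_{\text{loc}}(\mathbb{R}^2)$ for every $\gamma\in(0,1)$. Since $V'$ is $C^1$ and $u\in C^{1,\gamma}_{\text{loc}}$, the composition $V'(u)$ is Hölder continuous on compacta; interior Schauder estimates (applied where the continuous coefficient $A$ is locally approximated by Hölder functions, or invoked in the weighted sense) then yield $u\in C^2_{\text{loc}}(\mathbb{R}^2)$, so the equation is satisfied pointwise. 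The main obstacle lies in Step 4 when $A$ is merely continuous (as Class 4 permits): the classical Schauder machinery does not apply verbatim, and one must either rely on the $W^{2,p}$ bootstrap to interpret ``classical'' in the strong (pointwise a.e.) sense, or use a localization/mollification argument to treat $A$ as Hölder on each compact subset.
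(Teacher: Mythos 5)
Your proposal follows exactly the route the paper intends: Theorem~\ref{T7} is presented there as an immediate rewriting of Theorem~\ref{T6} for $\Phi(t)=t^2/2$, and you correctly fill in the hypothesis-checking that the paper leaves implicit. Steps~1--3 are accurate: with $\phi\equiv 1$ one has $(\phi_1)$ with equality $l=m=2$ in $(\phi_2)$ and $q=2$ in $(\phi_3)$; $(V_5)$ together with Taylor's theorem gives $(V_8)$ (choosing $\theta<\beta$), and $V\in C^2$ together with $V'(\pm\beta)=0$ gives $(V_7)$ since $\phi\equiv 1$; the symmetry hypothesis $(V_6)$ transports the estimate at $\beta$ to $-\beta$. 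Theorem~\ref{T6} then yields a $C^{1,\gamma}_{\text{loc}}$ heteroclinic with all three geometric properties, the strict ones included.

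Your Step~4 caveat is legitimate and worth emphasizing: Class~4 only requires $A$ to be continuous, and the Newtonian potential of a merely continuous right-hand side need not be $C^2$, so the Schauder bootstrap does not deliver a genuinely classical ($C^2$) solution without an extra local H\"older assumption on $A$. What one unconditionally gets from Calder\'on--Zygmund is $u\in W^{2,p}_{\text{loc}}(\mathbb{R}^2)$ for all $p<\infty$, hence $u\in C^{1,\gamma}_{\text{loc}}$ for all $\gamma<1$, and the equation holds a.e.\ in the strong sense. The paper itself does not address this point (it simply restates Theorem~\ref{T6}), so the word ``classical'' in Theorem~\ref{T7} should be read as ``strong'' unless one additionally assumes $A\in C^{0,\sigma}_{\text{loc}}$. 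Your proposal is thus correct, essentially identical to the paper's intent, and in fact more careful than the paper on the regularity issue; the suggested ``mollification'' escape route, however, does not actually rescue $C^2$ regularity and should be dropped in favour of the honest $W^{2,p}_{\text{loc}}$ interpretation.
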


Before ending this section, we do some comments about our results: First, Theorems \ref{T1}, \ref{T2} and \ref{T3} complement the study carried out in \cite{Alves2} and \cite{Bonheure1}, because in those articles the authors considered the one-dimensional equation \eqref{eq1}, while in the present article we treat $(E)$ in the whole plane and investigated the existence of a heteroclinic solution for $(E)$ for other classes of functions $A$. Moreover, we believe that this is the first article where the existence of a heteroclinic solution for a prescribed mean curvature equation is addressed in whole $\mathbb{R}^2$. Secondly, Theorems \ref{T4} - \ref{T7} hold for all pair of real numbers $(\alpha,\beta)$ with $\alpha<\beta$. Furthermore, in Theorems \ref{T4} and \ref{T5} we can consider a variety of potentials as the prototypes \eqref{v1} and \eqref{v2}, while in Theorem \ref{T6} the potential $V$ must have a strong interaction with the N-function  $\Phi$, see for example \eqref{v3}. Finally, we would also like to point out that Theorems \ref{T4} and \ref{T5} complement the results obtained in \cite{Alves0}, because in that paper the author considered the Laplacian operator while here we considered a large class of quasilinear operators. Moreover, Theorem \ref{T7} brings new contributions to the classic equation \eqref{E1}, since a new class of functions $A$ is considered.

We organize the rest of this work as follows. Motivated by \cite{Alves0,Alves1,Rabinowitz}, in Section \ref{S2} we present the variational framework of the quasilinear problems mentioned above, which will be useful for the next section, and moreover, the proofs of Theorems \ref{T4}, \ref{T5} and \ref{T6} are given. Section \ref{S3} is dedicated to the proofs of Theorems \ref{T1}, \ref{T2} and \ref{T3} that were supported by the study developed in \cite[Section 3]{Alves2}. Finally, we write an Appendix \ref{A} about some results involving Orlicz and Orlicz-Sobolev spaces for unfamiliar readers with the topic.

\section{Existence of heteroclinic solution for quasilinear equations}\label{S2}

The goal of this section is to establish the existence of a heteroclinic solution from $\alpha$ to $\beta$ for $(PDE)$ taking into account the case where $A$ satisfies different geometric conditions. The proof of the existence of solution is given by a minimization argument. To formulate the minimization problem of this section, let us first consider the infinite strip $\Omega=\mathbb{R}\times(0,1)$ of $\mathbb{R}^2$ and for each $j\in\mathbb{Z}$ we define the functional $a_j:W^{1,\Phi}_{\text{loc}}(\Omega)\to \mathbb{R}\cup\{+\infty\}$ by 
$$
a_j(w)=\iint_{\Omega_j}\mathcal{L}(w)dxdy, ~~w\in W^{1,\Phi}_{\text{loc}}(\Omega),
$$
where $\Omega_j=(j,j+1)\times (0,1)$ and 
$$
\mathcal{L}(w)=\Phi(|\nabla w|)+A(\epsilon x,y)V(w).
$$
Under this notation, we also define the energy functional $I:W^{1,\Phi}_{\text{loc}}(\Omega)\to \mathbb{R}\cup\{+\infty\}$ by
$$
I(w)=\sum_{j\in\mathbb{Z}}a_j(w), ~~w\in W^{1,\Phi}_{\text{loc}}(\Omega).
$$
In what follows, for each $k\in\mathbb{Z}$ and $w\in W^{1,\Phi}_{\text{loc}}(\Omega)$ we denote
$$
\tau_kw(x,y)=w(x+k,y)~\text{ for all }~(x,y)\in\Omega.
$$ 
Clearly, $\tau_0w\equiv w$ on $\Omega$. Hereafter, let us identify $\tau_kw|_{\Omega_0}$ with $\tau_kw$ itself. Now, for the purposes of this paper, we will designates by $\Gamma_{\Phi}(\alpha,\beta)$ the class of admissible functions given by 
\begin{equation}\label{Class}
	\Gamma_{\Phi}(\alpha,\beta)\!=\!\left\{w \!\in\! W^{1,\Phi}_{\text{loc}}(\Omega)\!:\!\tau_kw\to \!\alpha\!\text{ in }\! L^\Phi(\Omega_0)\!\text{ as }\! k\to\!-\infty\!\text{ and }\!\tau_kw\to\!\beta\!\text{ in }\! L^\Phi(\Omega_0)\!\text{ as }\! k\to\!+\infty\right\}.
\end{equation} 
We would like to point out that, as $\Phi$ satisfies $\Delta_2$-condition, $\tau_kw$ goes to $\alpha$ in $L^\Phi(\Omega_0)$ as $k$ goes to $-\infty$ if, and only if, 
$$
\iint_{\Omega_k}\Phi(|w-\alpha|)dxdy\to 0\text{ as }k\to-\infty.
$$
Analogously, $\tau_kw$ goes to $\beta$ in $L^\Phi(\Omega_0)$ as $k$ goes to $+\infty$ if, and only if,
$$
\iint_{\Omega_k}\Phi(|w-\beta|)dxdy\to 0\text{ as }k\to+\infty.
$$ 
On the other hand, it is easy to check that the class $\Gamma_{\Phi}(\alpha,\beta)$ is not empty, because the function $\varphi_{\alpha,\beta}:\Omega\to\mathbb{R}$ defined by 
\begin{equation}\label{F1}
	\varphi_{\alpha,\beta}(x,y)=\left\{\begin{array}{lll}
		\beta,&\mbox{if}\quad \beta\leq x&\mbox{and}\quad y\in (0,1),\\ 
		x,&\mbox{if}\quad \alpha\leq x\leq \beta&\mbox{and}\quad y\in (0,1),\\
		\alpha,&\mbox{if}\quad x\leq\alpha&\mbox{and}\quad y\in (0,1)
	\end{array}\right.
\end{equation}
belongs to $\Gamma_{\Phi}(\alpha,\beta)$. By the properties of $\Phi$, $A$ and $V$, 
$$
a_j(w)\geq 0\text{ for all }j\in\mathbb{Z}\text{ and }w\in\Gamma_{\Phi}(\alpha,\beta),
$$
and hence, $I$ is bounded from below on $\Gamma_{\Phi}(\alpha,\beta)$. Furthermore, it is easy to see that the function given in \eqref{F1} has finite energy, that is, $I(\varphi_{\alpha,\beta})<+\infty$, and so, the real number 
$$
c_\Phi(\alpha,\beta)=\inf_{w\in\Gamma_{\Phi}(\alpha,\beta)}I(w)
$$
is well defined. Here it is worth mentioning that we will see throughout this section that critical points of the functional $I$ on the class $\Gamma_{\Phi}(\alpha,\beta)$ are heteroclinic solution from $\alpha$ to $\beta$ for the equation $(PDE)$.

\subsection{The case periodic}\label{Sb1}

In this subsection, we intend to investigate the existence of a heteroclinic solution from $\alpha$ to $\beta$ for $(PDE)$ with $\epsilon=1$ by assuming that $A$ belongs to Class 1 and, unless indicated, the potential $V$ satisfies the assumptions $(V_1)$-$(V_3)$. With the preliminaries contained at the beginning of this section we may state and prove our first result that will be useful in the next lemma.

\begin{lemma}\label{R1}
	If $w\in\Gamma_{\Phi}(\alpha,\beta)$, then for all $k\in\mathbb{Z}$ we have that $\tau_kw\in\Gamma_{\Phi}(\alpha,\beta)$ and $I(\tau_kw)=I(w)$.
\end{lemma}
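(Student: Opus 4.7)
The plan is to exploit two facts: first, the hypothesis that $A$ belongs to Class 1 means $A$ is $1$-periodic in $x$, so $A(x+k,y)=A(x,y)$ for every $k\in\mathbb{Z}$; second, integer translations compose in the obvious way, i.e.\ $\tau_j(\tau_k w)=\tau_{j+k}w$ for all $j,k\in\mathbb{Z}$. With these two ingredients the statement will reduce to a change of variables plus a reindexing of the series.

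First I would verify $\tau_k w\in\Gamma_{\Phi}(\alpha,\beta)$. Since $W^{1,\Phi}_{\mathrm{loc}}(\Omega)$ is invariant under integer translations in $x$, clearly $\tau_k w\in W^{1,\Phi}_{\mathrm{loc}}(\Omega)$. To check the asymptotic condition, I would use the composition identity above to write, for each $j\in\mathbb{Z}$,
$$
\iint_{\Omega_0}\Phi(|\tau_j(\tau_k w)-\beta|)\,dx\,dy=\iint_{\Omega_0}\Phi(|\tau_{j+k}w-\beta|)\,dx\,dy,
$$
and observe that the right-hand side tends to $0$ as $j\to+\infty$ because $j+k\to+\infty$ and $w\in\Gamma_{\Phi}(\alpha,\beta)$; the analogous argument handles the limit $j\to-\infty$ with $\alpha$ in place of $\beta$. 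By the equivalence noted in the excerpt (since $\Phi$ satisfies $\Delta_2$), this is exactly the required $L^\Phi(\Omega_0)$ convergence.

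Next I would prove $I(\tau_k w)=I(w)$ by showing the stronger statement $a_j(\tau_k w)=a_{j+k}(w)$ for every $j\in\mathbb{Z}$. Indeed, the substitution $x'=x+k$, together with $|\nabla\tau_k w(x,y)|=|\nabla w(x+k,y)|$ and the $1$-periodicity of $A$ in $x$ which yields $A(x'-k,y)=A(x',y)$, transforms
$$
a_j(\tau_k w)=\iint_{\Omega_j}\bigl[\Phi(|\nabla w(x+k,y)|)+A(x,y)V(w(x+k,y))\bigr]\,dx\,dy
$$
into $\iint_{\Omega_{j+k}}\mathcal{L}(w)\,dx'\,dy=a_{j+k}(w)$. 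Summing over $j\in\mathbb{Z}$ and reindexing $j'=j+k$ then gives $I(\tau_k w)=\sum_{j\in\mathbb{Z}}a_{j+k}(w)=\sum_{j'\in\mathbb{Z}}a_{j'}(w)=I(w)$.

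There is no genuine obstacle here; the whole lemma is a careful bookkeeping exercise, and the only place where the hypotheses are really used is the $1$-periodicity of $A$ in $x$, which is exactly what makes the translation $\tau_k$ preserve the integrand (and hence each $a_j$ up to an index shift).
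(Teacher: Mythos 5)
Your proof is correct and follows essentially the same route as the paper: the paper also proves $a_j(\tau_k w)=a_{j+k}(w)$ by a change of variable together with the $1$-periodicity of $A$ in $x$, then reindexes the sum; the paper simply states the membership $\tau_k w\in\Gamma_\Phi(\alpha,\beta)$ as obvious, whereas you spell it out via $\tau_j(\tau_k w)=\tau_{j+k}w$.
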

\begin{proof}
Initially, it is easy to see that $\tau_{k}w\in\Gamma_{\Phi}(\alpha,\beta)$ for any $k\in\mathbb{Z}$ and $w\in\Gamma_{\Phi}(\alpha,\beta)$. On the other hand, for each $j\in\mathbb{Z}$, a simple change variable combined with the periodicity of $A$ in the variable $x$ leads to  
	\begin{equation*}
		\begin{split}
			a_j(\tau_kw)&=\iint_{\Omega_j}\left(\Phi(|\nabla \tau_kw|)+A(x,y)V(\tau_kw)\right)dxdy\\&=\iint_{\Omega_j}\left(\Phi(|\nabla w(x+k,y)|)+A(x+k,y)V(w(x+k,y))\right)dxdy\\&=\iint_{\Omega_{j+k}}\left(\Phi(|\nabla w|)+A(x,y)V(w)\right)dxdy=a_{j+k}(w),
		\end{split}
	\end{equation*}
from which it follows that 
$$
I(\tau_kw)=\sum_{j\in\mathbb{Z}}a_j(\tau_kw)=\sum_{j\in\mathbb{Z}}a_{j+k}(w)=\sum_{j\in\mathbb{Z}}a_j(w)=I(w), 
$$
and the proof is completed.
\end{proof}

Now we employ the Lemma \ref{R1} to prove that the energy functional $I$ reaches the minimum energy in some function of $\Gamma_\Phi(\alpha,\beta)$.

\begin{proposition}\label{R2}
	There exists $u\in\Gamma_\Phi(\alpha,\beta)$ such that $I(u)=c_\Phi(\alpha,\beta)$ and $\alpha\leq u(x,y)\leq \beta$ almost everywhere in $\Omega$.
\end{proposition}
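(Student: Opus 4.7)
The plan is to minimize the functional $I$ on $\Gamma_{\Phi}(\alpha,\beta)$ by a direct method, using truncation to enforce the pointwise bounds and the translation invariance from Lemma \ref{R1} to prevent the minimizing sequence from escaping to infinity.

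First, I would pick a minimizing sequence $(w_n)\subset\Gamma_{\Phi}(\alpha,\beta)$ with $I(w_n)\to c_{\Phi}(\alpha,\beta)$, and immediately replace each $w_n$ by its truncation $\widetilde{w}_n(x,y)=\max\{\alpha,\min\{\beta,w_n(x,y)\}\}$. Since $V\geq 0$ and $V(\alpha)=V(\beta)=0$ by $(V_2)$--$(V_3)$, and since $|\nabla\widetilde{w}_n|\leq|\nabla w_n|$ a.e.\ so $\Phi(|\nabla\widetilde{w}_n|)\leq\Phi(|\nabla w_n|)$ by monotonicity of $\Phi$, we get $I(\widetilde{w}_n)\leq I(w_n)$; moreover $\widetilde{w}_n\in\Gamma_{\Phi}(\alpha,\beta)$ and $\alpha\leq\widetilde{w}_n\leq\beta$ a.e. So without loss of generality the minimizing sequence is pointwise trapped in $[\alpha,\beta]$.

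The next step is a centering argument via Lemma \ref{R1}. Fix $\eta\in(0,\tfrac12 c_{\Phi}(\alpha,\beta))$ (one can show $c_{\Phi}(\alpha,\beta)>0$ because any $w\in\Gamma_{\Phi}(\alpha,\beta)$ must transit from $\alpha$ to $\beta$ through values where $V>0$). For each $n$ large, choose $k_n\in\mathbb{Z}$ as the least integer for which $\sum_{j\leq k_n}a_j(w_n)\geq\eta$, and replace $w_n$ by $\tau_{-k_n}w_n$; by Lemma \ref{R1} this preserves both $\Gamma_{\Phi}(\alpha,\beta)$ and the energy. After this normalization, no energy mass escapes to $-\infty$ along the sequence and, by the bound $I(w_n)\leq c_{\Phi}(\alpha,\beta)+o(1)$, no more than $c_{\Phi}(\alpha,\beta)-\eta$ can escape to $+\infty$ either.

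Because $\alpha\leq w_n\leq\beta$ and $\sum_j\iint_{\Omega_j}\Phi(|\nabla w_n|)\,dxdy\leq I(w_n)$ is bounded, the sequence is bounded in $W^{1,\Phi}(\Omega_j)$ for every fixed $j$. The reflexivity of $W^{1,\Phi}$ (guaranteed by $(\phi_1)$--$(\phi_2)$ and $\Delta_2$, as noted in the paper) together with a diagonal extraction yields $u\in W^{1,\Phi}_{\text{loc}}(\Omega)$ with $w_n\rightharpoonup u$ weakly in $W^{1,\Phi}(\Omega_j)$ and a.e.\ for every $j$; clearly $\alpha\leq u\leq\beta$ a.e. Convexity of $\Phi(|\cdot|)$ gives weak lower semicontinuity of $\iint_{\Omega_j}\Phi(|\nabla\cdot|)$, and Fatou's lemma handles the nonnegative potential term, so $a_j(u)\leq\liminf_n a_j(w_n)$ for each $j$, whence
\begin{equation*}
I(u)=\sum_{j\in\mathbb{Z}}a_j(u)\leq\liminf_{n\to\infty}\sum_{j\in\mathbb{Z}}a_j(w_n)=c_{\Phi}(\alpha,\beta).
\end{equation*}

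The delicate point, which I expect to be the main obstacle, is to prove $u\in\Gamma_{\Phi}(\alpha,\beta)$, i.e.\ that $\tau_k u\to\alpha$ (resp.\ $\beta$) in $L^{\Phi}(\Omega_0)$ as $k\to-\infty$ ($+\infty$). The idea is that the centering choice keeps at least $\eta$ of the energy to the left of the origin uniformly in $n$, and at most $c_{\Phi}(\alpha,\beta)-\eta$ to its right; combined with $\sum_j a_j(w_n)\leq c_{\Phi}(\alpha,\beta)+o(1)$, the tail energies $\sum_{|j|\geq N}a_j(w_n)$ are small uniformly in $n$ for $N$ large. Passing to the limit and using $A(x,y)\geq A_0>0$ gives $\iint_{\Omega_k}V(u)\,dxdy\to 0$ as $|k|\to\infty$. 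Since $V>0$ on $(\alpha,\beta)\setminus\{\text{endpoints}\}$, a standard argument (using that $\alpha\leq u\leq\beta$ and the $W^{1,\Phi}(\Omega_k)$-bound to control oscillations) forces $u$ to be arbitrarily close to the set $\{\alpha,\beta\}$ on $\Omega_k$ for $|k|$ large, and the translation normalization rules out the "wrong" endpoint, yielding the required $L^{\Phi}$-convergence. This delivers $u\in\Gamma_{\Phi}(\alpha,\beta)$ and $I(u)=c_{\Phi}(\alpha,\beta)$.
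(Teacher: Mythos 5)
Your overall framework---truncating the minimizing sequence to enforce $\alpha\le w_n\le\beta$, recentering by integer translations via Lemma~\ref{R1}, diagonal extraction of a weak limit $u$, and weak lower semicontinuity plus Fatou to get $I(u)\le c_\Phi(\alpha,\beta)$---is the same as the paper's, up to a cosmetic change in the centering criterion (you center by partial energy sums; the paper centers by the $\Phi$-distance of $u_n$ to $\alpha$ on $\Omega_k$, with a threshold $\delta<\Phi(\beta-\alpha)$). Both normalizations are adequate for the $k\to-\infty$ side.

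The genuine gap is in the last and decisive step. You correctly observe that $a_k(u)\to 0$ forces the translates $\tau_k u$ to accumulate at one of the two constants $\alpha$ or $\beta$ at each end, and that the centering excludes $\beta$ at $-\infty$. But you then assert that ``the translation normalization rules out the `wrong' endpoint'' at $+\infty$ as well, and this is not true. The normalization only pins down the behavior of $u_n$ to the \emph{left} of the origin; it does not prevent $u_n$ from climbing to $\beta$, then dipping back toward $\alpha$ at some position $k_n\to+\infty$, and only afterwards returning to $\beta$. In the diagonal limit such a sequence produces a $u$ with $\tau_k u\to\alpha$ as $k\to+\infty$, and nothing in your argument excludes this. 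Ruling it out is precisely the content of the bulk of the paper's proof: assuming $u^{**}=\alpha$, one fixes a far-out index $k$, glues the constant $\alpha$ onto $u_n$ on $(-\infty,k]$ by a linear interpolation on $\Omega_k$, and shows (Step~2, Parts~1--4) that the gluing cost $a_0(x(\tau_k u_n-\alpha)+\alpha)$ can be made $<\tilde\epsilon/2$, while (Step~1) the normalization locks a fixed amount $\epsilon_0>0$ of energy in $\Omega_{-1}\cup\Omega_0$. The resulting competitor $U_n\in\Gamma_\Phi(\alpha,\beta)$ then satisfies $I(U_n)\le I(u_n)-\epsilon_0+\tilde\epsilon< c_\Phi(\alpha,\beta)$, a contradiction. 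This energy-comparison/cut-and-paste argument is indispensable and is entirely absent from your proposal; the centering alone cannot deliver it.

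Two smaller remarks: the claim $c_\Phi(\alpha,\beta)>0$, which you invoke to set up the centering, is true but is not proved in the paper (the paper avoids needing it by using the threshold $\delta<\Phi(\beta-\alpha)$ instead); and note that your phrase ``forces $u$ to be arbitrarily close to the set $\{\alpha,\beta\}$'' still leaves the dichotomy $\alpha$ versus $\beta$ at $+\infty$ to be resolved, which is exactly the open issue.
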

\begin{proof}
Let $(u_n)$ be a minimizing sequence for $I$ on $\Gamma_\Phi(\alpha,\beta)$, that is, $I(u_n)\to c_\Phi(\alpha,\beta)$ as $n\to+\infty$. Thus, there is a constant $M>0$ verifying
\begin{equation}\label{1}
	I(u_n)\leq M~\text{ for all}~n\in\mathbb{N}.	
\end{equation}
We claim that we may assume without loss of generality that the sequence $u_n$ satisfies 
$$
\alpha\leq u_n(x,y)\leq \beta\text{ for all }(x,y)\in \Omega\text{ and }n\in\mathbb{N}.
$$ 
Indeed, just consider
$$
\tilde{u}_n(x,y)=\max\{\alpha,\min\{u_n(x,y),\beta\}\}, ~~(x,y)\in\Omega,
$$ 
instead of $u_n$. Moreover, we can also assume that for each $n\in\mathbb{N}$,
\begin{equation}\label{2}
	\iint_{\Omega_0}\Phi(|u_n-\alpha|)dxdy>\delta  \text{ and }\iint_{\Omega_{k-1}}\Phi(|u_n-\alpha|)dxdy\leq\delta \text{ for }k\leq 0,
\end{equation}
for some $\delta>0$ such that 
\begin{equation}\label{3}
		\delta<\Phi(\beta-\alpha).
\end{equation}
To establish this statement, it suffices to observe that
$$
\liminf_{k\to+\infty}\iint_{\Omega_0}\Phi(|\tau_ku_n-\alpha|)dxdy>0=\lim_{k\to-\infty}\iint_{\Omega_0}\Phi(|\tau_ku_n-\alpha|)dxdy.
$$
Indeed, note first that for each $n\in\mathbb{N}$ fixed, 
$$
\tau_ku_n\to \beta\text{ in }L^\Phi(\Omega_0)\text{ as }k\to +\infty,
$$ 
and by $\Phi\in\Delta_2$ (see for a moment \eqref{NT0}), 
$$
\iint_{\Omega_0}\Phi(|\tau_ku_n-\beta|)dxdy\to 0\text{ as }k\to+\infty.
$$
Thus, since $\alpha\neq\beta$, there are $\delta>0$ and a subsequence of $(\tau_ku_n)_{k\geq0}$, still denoted $(\tau_ku_n)$, such that 
$$
\iint_{\Omega_0}\Phi(|\tau_ku_n-\alpha|)dxdy> \delta,~~\forall k>0.
$$
Note that, without loss of generality, we may assume that $\delta$ satisfies \eqref{3}. On the other hand, using again the fact that $\Phi\in\Delta_2$, $\tau_ku_n$ goes to $\alpha$ in $L^\Phi(\Omega_0)$ as $k$ goes to $-\infty$ implies
$$
\iint_{\Omega_0}\Phi(|\tau_ku_n-\alpha|)dxdy\to 0\text{ as }k\to-\infty,
$$
and so, there exists an integer $\overline{k}_n<0$ such that 
$$
\iint_{\Omega_0}\Phi(|\tau_ku_n-\alpha|)dxdy\leq \delta,~~\forall k\leq \overline{k}_n.
$$
From this, it is possible to find the bigger integer $k_n\in\mathbb{Z}$ such that
$$
\iint_{\Omega_{k-1}}\Phi(|u_n-\alpha|)dxdy\leq\delta\text{ for all }k\leq k_n\text{ and }\iint_{\Omega_{k_n}}\Phi(|u_n-\alpha|)dxdy>\delta,
$$
that is, 
$$
\iint_{\Omega_{j-1}}\Phi(|\tau_{k_n}u_n-\alpha|)dxdy\leq\delta\text{ for all }j\leq 0\text{ and }\iint_{\Omega_0}\Phi(|\tau_{k_n}u_n-\alpha|)dxdy>\delta.
$$
Now, we can apply Lemma \ref{R1} to consider $\tau_{k_n}u_n$ in the place of $u_n$. 

Now, since $\alpha\leq u_n\leq \beta$ in $\Omega$, it is straightforward to check that $(u_n)$ is bounded in $W_{\text{loc}}^{1,\Phi}(\Omega)$. Thereby, in view of Lemma \ref{A4}, $W^{1,\Phi}(K)$ is reﬂexive Banach spaces whenever $K$ is relatively compact in $\Omega$, and so, by a classical diagonal argument, there are a subsequence of $(u_n)$, still denoted by $(u_n)$, and $u\in W_{\text{loc}}^{1,\Phi}(\Omega)$ satisfying 
\begin{equation}\label{9}
	u_n\rightharpoonup u~~\text{in}~~W_{\text{loc}}^{1,\Phi}(\Omega)\text{ as } n\to+\infty,
\end{equation}
\begin{equation}\label{7}
	u_n\to u~~\text{in}~~L^\Phi_{\text{loc}}(\Omega)\text{ as }n\to+\infty
\end{equation}
and
\begin{equation}\label{10}
	u_n(x,y)\rightarrow u(x,y)~~\text{a.e. in}~~\Omega\text{ as } n\to+\infty.
\end{equation}
As a consequence of \eqref{10}, 
\begin{equation}\label{4}
	\alpha\leq u(x,y)\leq \beta\text{ almost everywhere in }\Omega.
\end{equation}
Moreover, from \eqref{1}, we have the inequality below 
$$
\int_{0}^1\int_{-j}^{j}\mathcal{L}(u_n)dxdy\leq M,~~\forall n,j\in\mathbb{N},
$$
which combines with weak lower semicontinuity of $I$ to give 
$$
\int_{0}^1\int_{-j}^{j}\mathcal{L}(u)dxdy\leq M,~~\forall j\in\mathbb{N}.
$$
Therefore, since $j\in\mathbb{N}$ is arbitrary, we conclude that $I(u)\leq M$. With the aid of the previous preliminaries, our goal is to ensure that $u$ belongs to $\Gamma_{\Phi}(\alpha,\beta)$. Towards that end, we will show that
\begin{equation}\label{5}
	\tau_ku\to\alpha\text{ in }L^\Phi(\Omega_0)\text{ as } k\to-\infty.
\end{equation}
To show \eqref{5}, let us consider the sequence $(\tau_ku)_{k\leq 0}$ with $k\in\mathbb{Z}$. Due to Lemma \ref{R1} and the estimate \eqref{4}, it is simple to prove that $(\tau_ku)_{k\leq 0}$ is bounded in $W^{1,\Phi}(\Omega_0)$. Consequently, for some subsequence, there exists $u^*\in W^{1,\Phi}(\Omega_0)$ such that
\begin{equation}\label{6}
	\tau_ku\rightharpoonup u^*\text{ in }W^{1,\Phi}(\Omega_0)~\text{ as }~k\to-\infty,
\end{equation}
\begin{equation}\label{8}
	\tau_ku\to u^*~\text{ in }~L^\Phi(\Omega_0)\text{ as }k\to-\infty
\end{equation}
and 
\begin{equation}\label{11}
	\alpha\leq u^*(x,y)\leq\beta~\text{ almost everywhere on }~\Omega_0.	
\end{equation}
Now, since $I(u)\leq M$, the definition of $I$ ensures that
$$
a_k(u)\to 0\text{ as }|k|\to+\infty.
$$
This together with the periodicity of $A$ yields that 
\begin{equation}\label{77}
	a_0(\tau_k u)\to 0\text{ as }|k|\to+\infty.
\end{equation}
Now, the fact that $a_0$ is weakly lower semicontinuous on $W^{1,\Phi}(\Omega_0)$ and $a_0\geq 0$ together \eqref{6} and \eqref{77} guarantee that $a_0(u^*)=0$. Thereby, \eqref{11} together with the assumptions on functions $A$ and $V$ ensures that $u^*=\alpha$ or $u^*=\beta$ a.e. in $\Omega_0$. On the other hand, it follows from \eqref{2} and \eqref{7} that
\begin{equation*}
	\iint_{\Omega_0}\Phi(|u-\alpha|)dxdy\geq\delta \text{ and }\iint_{\Omega_0}\Phi(|\tau_ku-\alpha|)dxdy\leq\delta \text{ for }k<0.
\end{equation*}
Consequently, taking the limit as $k\to-\infty$ in the
inequality above and employing \eqref{8}, we arrive at  
$$
\iint_{\Omega_0}\Phi(|u^*-\alpha|)dxdy\leq\delta.
$$
From \eqref{3}, one has $u^*=\alpha$ a.e. in $\Omega_0$, showing that the limit \eqref{5} is valid. Now we claim that
\begin{equation}\label{12}
	\tau_ku\to\beta\text{ in }L^{\Phi}(\Omega_0)\text{ as }k\to+\infty.
\end{equation}
Indeed, considering the sequence $(\tau_ku)_{k>0}$ with $k\in\mathbb{N}$, there exist $u^{**}\in W^{1,\Phi}(\Omega_0)$ and a subsequence of $(\tau_ku)$, still denoted $(\tau_ku)$, such that 
\begin{equation}\label{13}
	\tau_ku\rightharpoonup u^{**}\text{ in }W^{1,\Phi}(\Omega_0)\text{ as }k\to+\infty,
\end{equation}
\begin{equation}\label{14}
	\tau_ku\to u^{**}\text{ in }L^\Phi(\Omega_0)\text{ as }k\to+\infty,
\end{equation}
\begin{equation}\label{15}
	\tau_ku\to u^{**}\text{ in }L^1(\Omega_0)\text{ as }k\to+\infty
\end{equation}
and 
\begin{equation}\label{16}
	\tau_ku(x,y)\to u^{**}(x,y)\text{ a.e in }\Omega_0\text{ as }k\to+\infty.
\end{equation}
Arguing as above, we will get that $u^{**}=\alpha$ or $u^{**}=\beta$ a.e in $\Omega_0$. The  claim \eqref{12} follows if we prove that $u^{**}=\beta$ a.e in $\Omega_0$, and to do that, we will split the proof into two steps. So, seeking for a contradiction we assume that $u^{**}=\alpha$ a.e. in $\Omega_0$.\\
\textbf{Step 1:} There are $\epsilon_0>0$ and  $n_1\in\mathbb{N}$ such that 
\begin{equation}\label{17}
	a_{-1}(u_n)+a_0(u_n)=\int_{0}^1\int_{-1}^{1}\left(\Phi(|\nabla u_n|)+A(x,y)V(u_n)\right)dxdy\geq\epsilon_0,~~\forall n\geq n_1. 
\end{equation}

Indeed, if this does not hold, then there is a subsequence $(u_{n_i})$ of $(u_n)$ such that 
$$
\int_{0}^1\int_{-1}^{1}\left(\Phi(|\nabla u_{n_i}|)+A(x,y)V(u_{n_i})\right)dxdy\to0.
$$
Consequently, there is $v\in W^{1,\Phi}((-1,1)\times (0,1))$ such that 
$$
u_{n_i}\rightharpoonup v~~\text{ in }~~W^{1,\Phi}((-1,1)\times (0,1)), \quad u_{n_i}\to v~~\text{ in }~~L^{\Phi}((-1,1)\times (0,1))
$$
and
\begin{equation} \label{Newequation}
v=\alpha \quad \mbox{or} \quad v=\beta \quad \mbox{a.e. in} \quad (-1,1)\times (0,1).
\end{equation}
Making a simple analysis of the estimates contained in \eqref{2}, we infer that \eqref{Newequation} is impossible, which ends this step.
	
To proceed to the next step, let us fix $\tilde{\epsilon}\in(0,\epsilon_0/2)$ and $n_0\geq n_1$ such that
\begin{equation}\label{18}
	I(u_n)\leq c_\Phi(\alpha,\beta)+\frac{\tilde{\epsilon}}{2},~~\forall n\geq n_0,
\end{equation}
where $\epsilon_0$ and $n_1$ were given in Step 1. \\
\textbf{Step 2:} There are $k\in\mathbb{N}$ and $n\geq n_0$ large enough satisfying
\begin{equation}\label{19}
	a_0\left(x(\tau_ku_n-\alpha)+\alpha\right)\leq \frac{\tilde{\epsilon}}{2}.
\end{equation}

In order to show estimate \eqref{19}, we will separately analyze the terms of the functional $a_0$ that will be divided into four parts as follows: \\	
\textbf{Part 1:} There exists $k_0\in\mathbb{N}$ such that for each $k\geq k_0$ there is $n(k)\geq n_0$ verifying
\begin{equation}\label{20}
	\iint_{\Omega_0}A(x,y)V(\tau_{k}u_n)dxdy\leq\frac{\tilde{\epsilon}}{24\cdot4^{m}}, ~~\forall n\geq n(k),
\end{equation}
\begin{equation}\label{21}
	\iint_{\Omega_0}A(x,y)V\left(x(\tau_{k}u_{n}-\alpha)+\alpha\right)dxdy\leq \frac{\tilde{\epsilon}}{4},~~\forall n\geq n(k)
\end{equation}
and 
\begin{equation}\label{22}
	\iint_{\Omega_0}\Phi(|\tau_ku_n-\alpha|)dxdy\leq\frac{\tilde{\epsilon}}{12\cdot4^{m}},~~\forall n\geq n(k),
\end{equation}
where $m$ was given in $(\phi_2)$.

In fact, let us initially note that, since $V\in C^1$ and $\tau_ku(x,y)\in[\alpha,\beta]$ for any $(x,y)\in\Omega_0$, the Mean Value Theorem together with $(V_2)$ gives us
$$
V(\tau_ku),~V(x(\tau_ku-\alpha)+\alpha)\leq R|\tau_ku-\alpha|,~~\forall(x,y)\in\Omega_0,
$$
for some $R>0$. Consequently,
$$
\iint_{\Omega_0}A(x,y)V(x(\tau_ku-\alpha)+\alpha)dxdy\leq R\sup_{\Omega_0}A(x,y)\iint_{\Omega_0}|\tau_{k}u-\alpha|dxdy
$$
and 
$$
\iint_{\Omega_0}A(x,y)V(\tau_ku)dxdy\leq R\sup_{\Omega_0}A(x,y)\iint_{\Omega_0}|\tau_{k}u-\alpha|dxdy.
$$
Now, as we are assuming that $u^{**}=\alpha$ a.e in $\Omega_0$, it follows from \eqref{15} that there is $k_0\in\mathbb{N}$ such that 
\begin{equation}\label{23}
	\iint_{\Omega_0}A(x,y)V(x(\tau_ku-\alpha)+\alpha)dxdy\leq\frac{\tilde{\epsilon}}{8},~~\forall k\geq k_0
\end{equation}
and 
\begin{equation}\label{24}
	\iint_{\Omega_0}A(x,y)V(\tau_ku)dxdy\leq\frac{\tilde{\epsilon}}{48\cdot4^{m}},~~\forall k\geq k_0.
\end{equation}
Furthermore, from \eqref{14}, increasing $k_0$ if necessary, one gets 
\begin{equation}\label{25}
	\iint_{\Omega_0}\Phi(|\tau_ku-\alpha|)dxdy\leq \frac{\tilde{\epsilon}}{24\cdot8^{m}},~~\forall k\geq k_0.
\end{equation}
On the other hand, for each $k\in\mathbb{N}$ fixed, Lebesgue Dominated Convergence Theorem yields
$$
\left|\iint_{\Omega_0}A(x,y)\left(V(x(\tau_ku_n-\alpha)+\alpha)-V(x(\tau_ku-\alpha)+\alpha)\right)dxdy\right|\to0\text{ as }n\to+\infty
$$ 
and 
$$
\left|\iint_{\Omega_0}A(x,y)\left(V(\tau_ku_n)-V(\tau_ku)\right)dxdy\right|\to0\text{ as }n\to+\infty.
$$ 
Moreover, as $\Phi\in\Delta_2$, for each $k\in\mathbb{N}$ we can use the limit \eqref{7} to find 
$$
\iint_{\Omega_0}\Phi(|\tau_ku_n-\tau_ku|)dxdy\to0\text{ as }n\to+\infty.
$$
With everything, for every $k\geq k_0$ there exists $n(k)\geq n_0$ satisfying 
\begin{equation}\label{26}
	\iint_{\Omega_0}A(x,y)V(x(\tau_ku_n-\alpha)+\alpha)dxdy\leq \iint_{\Omega_0}A(x,y)V(x(\tau_{k}u-\alpha)+\alpha)dxdy+\frac{\tilde{\epsilon}}{8},~~\forall n\geq n(k),
\end{equation}
\begin{equation}\label{27}
	\iint_{\Omega_0}A(x,y)V(\tau_ku_n)dxdy\leq \iint_{\Omega_0}A(x,y)V(\tau_{k}u)dxdy+\frac{\tilde{\epsilon}}{48\cdot4^{m}},~~\forall n\geq n(k),
\end{equation}
and 
\begin{equation}\label{28}
	\iint_{\Omega_0}\Phi(|\tau_ku_n-\tau_ku|)dxdy\leq\frac{\tilde{\epsilon}}{24\cdot8^{m}},~~\forall n\geq n(k).
\end{equation}
Finally, analyzing all the estimates, a direct use of \eqref{23}-\eqref{24} and \eqref{26}-\eqref{27} lead to \eqref{20} and \eqref{21}. To see the inequality \eqref{22}, note that from Lemma \ref{A5}-$(a)$ one gets
$$
\Phi(|\tau_ku_n-\alpha|)\leq 2^{m}\left(\Phi(|\tau_ku_n-\tau_ku|)+\Phi(|\tau_ku-\alpha|)\right). 
$$
Now, \eqref{22} follows from  \eqref{25} and \eqref{28}, finishing the first part. \\
\textbf{Part 2:} There are $k\geq k_0$ and $n\geq n(k)$ such that 
\begin{equation}\label{29}
	a_0\left(\tau_{k}u_n\right)\leq \frac{\tilde{\epsilon}}{12\cdot4^{m}}.
\end{equation}

If the estimate above does not occur, for any $k\geq k_0$ one has
$$
a_0\left(\tau_{k}u_j\right)> \frac{\tilde{\epsilon}}{12\cdot4^{m}},~~\forall n\geq n(k).
$$
Now let $p\in\mathbb{N}$ be such that 
$$
(p+1)\frac{\tilde{\epsilon}}{12\cdot4^{m}}>M,
$$
where $M$ was given in \eqref{1}. Fixing $i\in\mathbb{N}$ such that $i>\max\{n(k):k_0\leq k\leq k_0+p\}$ we have
$$
I(u_i)\geq \sum_{t=k_0}^{k_0+p}a_t(u_i)=\sum_{t=k_0}^{k_0+p}a_0(\tau_tu_i)\geq (p+1)\frac{\tilde{\epsilon}}{12\cdot4^{m}}>M,
$$
which contradicts \eqref{1}, showing \eqref{29}.\\
\textbf{Part 3:} For $k$ and $n$ as in Part 2, one has 
\begin{equation}\label{30}
	\iint_{\Omega_0}\Phi(|\partial_x(\tau_ku_n)|)dxdy,~\iint_{\Omega_0}\Phi(|\partial_y(\tau_ku_n)|)dxdy\leq\frac{\tilde{\epsilon}}{12\cdot4^{m}}.
\end{equation}

Indeed,	just notice that the inequality at \eqref{29} together with the facts that $\Phi$ is increasing on $[0,+\infty)$ and 
$$
|\partial_x(\tau_ku_n)|, |\partial_y(\tau_ku_n)|\leq |\nabla(\tau_ku_n)|,
$$ 
leads to estimate \eqref{30}.\\
\textbf{Part 4:} For $k$ and $n$ as in Part 2, one has
\begin{equation}\label{31}
	\iint_{\Omega_0}\Phi(|\nabla (x(\tau_ku_n-\alpha)+\alpha)|)dxdy\leq\frac{\tilde{\epsilon}}{4}.
\end{equation}

To show the estimate \eqref{31}, we first observe that
$$
\partial_x\left(x(\tau_ku_n-\alpha)+\alpha\right)=\tau_ku_n-\alpha+x\partial_x(\tau_ku_n)
$$
and 
$$
\partial_y\left(x(\tau_ku_n-\alpha)+\alpha\right)=x\partial_y\left(\tau_ku_n\right).
$$
Therefore, from Lemma \ref{A5}-$(a)$, 
$$
\Phi(|\nabla (x(\tau_ku_n-\alpha)+\alpha)|)\leq 4^{m}\left(\Phi(|\partial_x(\tau_ku_n)|)+\Phi(|\tau_ku_n-\alpha|)+\Phi(|\partial_y(\tau_ku_n)|)\right)\text{ on }\Omega_0,
$$
and the Part 4 follows from Parts 1 and 3.

Finally, the estimate \eqref{19} contained in Step 2 is immediately verified from \eqref{21} and \eqref{31}. We are now ready to use Steps 1 and 2 to complete the proof of Claim \eqref{12}. To this end, fix $k$ and $n$ as in Step 2 and define the following function
$$ 
U_n(x,y)=\left\{\begin{array}{lll}
		\alpha,&\mbox{if}\quad x\leq k&\mbox{and}\quad y\in (0,1),\\ 
		(u_n(x,y)-\alpha)(x-k)+\alpha,&\mbox{if}\quad k\leq x\leq k+1&\mbox{and}\quad y\in (0,1),\\
		u_n(x,y),&\mbox{if}\quad x>k+1&\mbox{and}\quad y\in (0,1).
	\end{array}\right.
$$
So, it is clear that $U_n\in\Gamma_{\Phi}(\alpha,\beta)$ and 
$$
a_k(U_n)=a_0(x(\tau_ku_n-\alpha)+\alpha).
$$
Hence, 
$$
c_{\Phi}(\alpha,\beta)\leq I(U_n)=a_k(U_n)+\sum_{j=k+1}^{+\infty}a_j(u_n)\leq a_k(U_n)+I(u_n)-a_{0}(u_n)-a_{-1}(u_n),
$$	
that is, 
$$
c_{\Phi}(\alpha,\beta)\leq a_0(x(\tau_ku_n-\alpha)+\alpha)+I(u_n)-a_{0}(u_n)-a_{-1}(u_n).
$$
Invoking estimates \eqref{17}, \eqref{18} and \eqref{19}, one gets
$$
c_\Phi(\alpha,\beta)\leq \frac{\tilde{\epsilon}}{2}+c_\Phi(\alpha,\beta)+\frac{\tilde{\epsilon}}{2}-\epsilon_0=\tilde{\epsilon}+c_\Phi(\alpha,\beta)-\epsilon_0<c_\Phi(\alpha,\beta)-\frac{\epsilon_0}{2},
$$
which is absurd. Therefore, \eqref{12} occurs and $u\in\Gamma_\Phi(\alpha,\beta)$. To conclude the proof, it remains to show that $I(u)=c_\Phi(\alpha,\beta)$. For this purpose, given $\epsilon>0$ there exists $n_0\in\mathbb{N}$ such that 
$$
\sum_{-j}^{j}a_k(u_n)\leq c_\Phi(\alpha,\beta)+\epsilon,~\forall n\geq n_0\text{ and }\forall j\in\mathbb{N}.
$$
Letting $n\to+\infty$ and after $j\to+\infty$, we find
$$
I(u)\leq c_\Phi(\alpha,\beta)+\epsilon.
$$
Since $\epsilon$ is arbitrary, we derive that $I(u)=c_\Phi(\alpha,\beta)$, and the proof is completed.
\end{proof}

In order to find a periodic solution $u(x,y)$ in the variable $y$ for the equation $(PDE)$, we will consider the following class
$$
K_{\Phi}(\alpha,\beta)=\left\{ u\in\Gamma_\Phi(\alpha,\beta):I(u)=c_\Phi(\alpha,\beta),u(x,0)=u(x,1)\text{ in }\mathbb{R},\alpha\leq u\leq\beta\text{ a.e. on }\Omega\right\}. 
$$
Next, we are going to show that $K_{\Phi}(\alpha,\beta)$ is not empty. 

\begin{lemma}\label{R3}
	It holds that $K_{\Phi}(\alpha,\beta)\neq\emptyset$. 
\end{lemma}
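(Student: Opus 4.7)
The plan is to start from the minimizer $u$ produced by Proposition~\ref{R2} and perform a reflection across the horizontal line $y=1/2$, exploiting the symmetry of $A$ about this line, to obtain a minimizer whose traces on $y=0$ and $y=1$ coincide. Combining $(A_2)$ and $(A_3)$ gives $A(x,1-y)=A(x,-y)=A(x,y)$, i.e., $A$ is symmetric about $y=1/2$, which is the structural ingredient that makes the reflection work.

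Split the energy into a lower and upper half by setting
$$
E^{-}(w)=\sum_{j\in\mathbb{Z}}\int_{0}^{1/2}\!\!\int_{j}^{j+1}\mathcal{L}(w)\,dxdy,\qquad E^{+}(w)=\sum_{j\in\mathbb{Z}}\int_{1/2}^{1}\!\!\int_{j}^{j+1}\mathcal{L}(w)\,dxdy,
$$
so that $I(w)=E^{-}(w)+E^{+}(w)$. The reflected function $u^{*}(x,y):=u(x,1-y)$ lies in $\Gamma_{\Phi}(\alpha,\beta)$ (a change of variable preserves the $L^{\Phi}(\Omega_0)$ asymptotics at $x\to\pm\infty$), satisfies $I(u^{*})=I(u)=c_{\Phi}(\alpha,\beta)$, and obeys $E^{\pm}(u^{*})=E^{\mp}(u)$. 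Replacing $u$ by $u^{*}$ if necessary, we may assume $E^{-}(u)\le E^{+}(u)$, and hence $E^{-}(u)\le c_{\Phi}(\alpha,\beta)/2$.

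Now define
$$
v(x,y)=\begin{cases} u(x,y), & y\in(0,1/2),\\[2pt] u(x,1-y), & y\in(1/2,1).\end{cases}
$$
The traces of $v$ from below and above the line $y=1/2$ both equal the trace of $u$ at $y=1/2$, so $v\in W^{1,\Phi}_{\text{loc}}(\Omega)$. The change of variables $y\mapsto 1-y$ on $(1/2,1)$, combined with the symmetry $A(x,y)=A(x,1-y)$, yields
$$
\iint_{\Omega_{j}}\mathcal{L}(v)\,dxdy=2\int_{0}^{1/2}\!\!\int_{j}^{j+1}\mathcal{L}(u)\,dxdy,
$$
whence $I(v)=2E^{-}(u)\le c_{\Phi}(\alpha,\beta)$. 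The asymptotic conditions required for $v\in\Gamma_{\Phi}(\alpha,\beta)$ follow by the same change of variable applied to $\iint_{\Omega_{k}}\Phi(|v-\alpha|)\,dxdy$ and $\iint_{\Omega_{k}}\Phi(|v-\beta|)\,dxdy$, since $u\in\Gamma_{\Phi}(\alpha,\beta)$.

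Therefore $I(v)=c_{\Phi}(\alpha,\beta)$. By construction $v(x,0)=u(x,0)$ and $v(x,1)=u(x,1-1)=u(x,0)$, so $v(x,0)=v(x,1)$ in the trace sense, and $\alpha\le u\le\beta$ a.e.\ transfers to $\alpha\le v\le\beta$ a.e.\ on $\Omega$; hence $v\in K_{\Phi}(\alpha,\beta)$. The one step that requires some care, rather than being purely formal, is verifying that the piecewise definition of $v$ actually puts it in $W^{1,\Phi}_{\text{loc}}(\Omega)$; this is a standard gluing argument for Sobolev functions whose interior traces agree across a smooth hypersurface, and extends to the Orlicz-Sobolev setting thanks to the $\Delta_{2}$-condition on $\Phi$ guaranteed by $(\phi_1)$--$(\phi_2)$.
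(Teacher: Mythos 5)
Your proposal is correct and follows essentially the same reflection argument as the paper: you split the energy across $y=1/2$, use the symmetry $A(x,y)=A(x,1-y)$ (from $(A_2)$--$(A_3)$) to make the reflected half-energy equal to the original, and glue. The only cosmetic difference is that you reduce to one case by replacing $u$ with $u(x,1-y)$ when needed, whereas the paper writes out the two cases $I_1(u)\le I_2(u)$ and $I_2(u)\le I_1(u)$ explicitly.
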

\begin{proof}
Initially, for each $w\in\Gamma_\Phi(\alpha,\beta)$ we set
$$
I_1(w)=\iint_{\mathbb{R}\times(0,\frac{1}{2})}\mathcal{L}(w)dxdy~~\text{and}~~I_2(w)=\iint_{\mathbb{R}\times(\frac{1}{2},1)}\mathcal{L}(w)dxdy.
$$ 	
Now, choosing $u\in \Gamma_\Phi(\alpha,\beta)$ as in Proposition \ref{R2}, we can write 
\begin{equation}\label{32}
	I(u)=I_1(u)+I_2(u)=c_{\Phi}(\alpha,\beta).
\end{equation}
Suppose for a moment that $I_1(u)\leq I_2(u)$  holds. Then considering the function 
$$
v(x,y)=\left\{\begin{array}{lll}
	u(x,y),&\mbox{if}\quad x\in\mathbb{R}&\mbox{and}\quad 0\leq y\leq \frac{1}{2},\\
	u(x,1-y),&\mbox{if}\quad x\in\mathbb{R} &\mbox{and}\quad \frac{1}{2}\leq y\leq 1,
\end{array}\right.
$$
it is clear that  $v\in\Gamma_\Phi(\alpha,\beta)$ and thanks to the assumptions $(A_2)$-$(A_3)$ a straightforward computation gives
\begin{equation}\label{33}
	I_2(v)=I_1(v)=I_1(u).
\end{equation}
According to \eqref{32} and \eqref{33},
$$
c_\Phi(\alpha,\beta)\leq I(v)=I_1(v)+I_2(v)\leq I(u)=c_\Phi(\alpha,\beta),
$$
from where it follows that $I(v)=c_\Phi(\alpha,\beta)$ with $v(x,0)=v(x,1)$ for every $x\in\mathbb{R}$ and $v(x,y)\in[\alpha,\beta]$ a.e. in $\Omega$. On the other hand, if $I_2(u)\leq I_1(u)$ occurs then in this case we define the function  
$$
\tilde{v}(x,y)=\left\{\begin{array}{lll}
	u(x,1-y),&\mbox{if}\quad x\in\mathbb{R}&\mbox{and}\quad 0\leq y\leq \frac{1}{2}\\
	u(x,y),&\mbox{if}\quad x\in\mathbb{R} &\mbox{and}\quad \frac{1}{2}\leq y\leq 1.
\end{array}\right.
$$
Consequently, using the same ideas discussed just above, we obtain that $\tilde{v}\in\Gamma_\Phi(\alpha,\beta)$ with $I_1(\tilde{v})=I_2(\tilde{v})=I_2(u)$, from where it follows that $I(\tilde{v})=c_\Phi(\alpha,\beta)$, $\tilde{v}(x,0)=\tilde{v}(x,1)$ for any $x\in\mathbb{R}$ and $\alpha\leq \tilde{v}\leq\beta$ a.e in $\Omega$, which completes the proof.
\end{proof}

We would like to emphasize here that the functions of $K_\Phi(\alpha,\beta)$ can be extended periodically in the variable $y$ on $\mathbb{R}^2$ with period 1. For this reason, it will be convenient to assume that the elements of $K_\Phi(\alpha,\beta)$ are extended to the whole real plane. Finally, we will show our best result of this section.

\begin{lemma}\label{R4}
If $u\in K_\Phi(\alpha,\beta)$, then $u$ is a weak solution of $(PDE)$ with $\epsilon=1$. Moreover, $u$ is a heteroclinic solution from $\alpha$ to $\beta$ which belongs to $C^{1,\gamma}_{\text{loc}}(\mathbb{R}^2)$ for some $\gamma\in (0,1)$. 
\end{lemma}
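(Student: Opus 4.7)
The plan is to read off the Euler--Lagrange equation from the minimality of $u$ on $\Gamma_\Phi(\alpha,\beta)$, then upgrade it from the strip $\Omega$ to $\mathbb{R}^2$ by exploiting the $y$-periodicity of $u$ granted by $u \in K_\Phi(\alpha,\beta)$. For any smooth $\widetilde\varphi$ on $\overline\Omega$ that is $1$-periodic in $y$ and has compact support in $x$, the perturbation $u + t\widetilde\varphi$ agrees with $u$ outside a bounded strip, so $\tau_k(u + t\widetilde\varphi)|_{\Omega_0} = \tau_k u$ for all $|k|$ large, and hence $u + t\widetilde\varphi \in \Gamma_\Phi(\alpha,\beta)$ for every $t \in \mathbb{R}$. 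Only finitely many summands $a_j$ of $I$ depend on $t$; $(\phi_1)$--$(\phi_2)$ together with $V \in C^1$ make $t \mapsto I(u + t\widetilde\varphi)$ differentiable at $t=0$, and minimality then yields
\[
\iint_\Omega \bigl( \phi(|\nabla u|)\, \nabla u \cdot \nabla \widetilde\varphi + A(x,y) V'(u)\, \widetilde\varphi \bigr)\, dx\, dy = 0.
\]
Extending $u$ to $\mathbb{R}^2$ by $1$-periodicity in $y$ (legitimate thanks to $u(x,0)=u(x,1)$) and periodizing an arbitrary $\varphi \in C_c^\infty(\mathbb{R}^2)$ as $\widetilde\varphi(x,y) = \sum_{k\in\mathbb{Z}} \varphi(x,y+k)$ (a finite sum), a Fubini/unfolding identity using $(A_3)$ and the periodicity of $u$ converts the above into
\[
\iint_{\mathbb{R}^2} \bigl( \phi(|\nabla u|)\, \nabla u \cdot \nabla \varphi + A(x,y) V'(u)\, \varphi \bigr)\, dx\, dy = 0,
\]
so $u$ is a weak solution of $(PDE)$ on $\mathbb{R}^2$.

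For regularity, I use $\alpha \leq u \leq \beta$ a.e.\ (so that $V'(u)$ is bounded) together with the local boundedness of $A$ to invoke standard $C^{1,\gamma}_{\mathrm{loc}}$-regularity for quasilinear equations whose principal part has $(\phi_1)$--$(\phi_2)$ structure (Lieberman-type estimates for $\Phi$-Laplacian equations with bounded right-hand side). This gives $u \in C^{1,\gamma}_{\mathrm{loc}}(\mathbb{R}^2)$ for some $\gamma \in (0,1)$. Each translate $\tau_k u$ satisfies the same equation with coefficients $A(\cdot+k,\cdot)$ of identical $L^\infty$-bound, so the resulting Hölder seminorms on $\overline{\Omega_0}$ are uniform in $k \in \mathbb{Z}$.

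Finally, I upgrade the $L^\Phi$ convergence $\tau_k u \to \alpha$ encoded in $\Gamma_\Phi(\alpha,\beta)$ to uniform convergence in $y$. The $\Delta_2$-property of $\Phi$ gives $\iint_{\Omega_k}\Phi(|u-\alpha|)\,dx\,dy \to 0$ as $k \to -\infty$. If uniformity failed, one would extract $\delta > 0$, $k_n \to -\infty$ and $(x_n,y_n) \in \overline{\Omega_0}$ with $|\tau_{k_n} u(x_n,y_n) - \alpha| \geq \delta$, and the uniform Hölder modulus from the regularity step would then produce $r > 0$, independent of $n$, with $|\tau_{k_n} u - \alpha| \geq \delta/2$ on $B((x_n,y_n),r)$. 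Pulling back to the original coordinates places this ball inside $\Omega_{k_n-1}\cup\Omega_{k_n}\cup\Omega_{k_n+1}$, so the uniform lower bound $\iint_B \Phi(|\tau_{k_n} u - \alpha|) \geq c_0 \Phi(\delta/2) > 0$ contradicts the vanishing of $a_{k_n-1}(u)+a_{k_n}(u)+a_{k_n+1}(u)$; the case $x \to +\infty$ with $\beta$ is symmetric. The genuinely delicate point is this translation-invariant Hölder estimate, which requires the regularity constants to depend only on $\|u\|_\infty$, $\|A\|_\infty$, and the parameters $l,m$ of $(\phi_2)$; this is precisely the regime covered by Lieberman's theory, so the remaining difficulty is bookkeeping rather than a genuinely new estimate.
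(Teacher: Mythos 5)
Your proposal follows essentially the same route as the paper: derive the Euler--Lagrange identity on the strip $\Omega$ from minimality in $\Gamma_\Phi(\alpha,\beta)$, upgrade to $\mathbb{R}^2$ by unfolding/periodization, invoke Lieberman's $C^{1,\gamma}$-regularity, and use uniform (translation-invariant) H\"older estimates to lift the $L^\Phi$-decay to the uniform heteroclinic limits. The paper compresses the last three steps into a citation of \cite[Theorem 1.1]{Alves1}, whereas you spell them out; the structure and ingredients are the same.

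One slip in the final step should be fixed: the lower bound $\iint_B \Phi(|\tau_{k_n}u-\alpha|)\ge |B|\,\Phi(\delta/2)$ does \emph{not} contradict the vanishing of $a_{k_n-1}(u)+a_{k_n}(u)+a_{k_n+1}(u)$. The functionals $a_j$ control $\Phi(|\nabla u|)$ and $A\,V(u)$, and $V$ vanishes at \emph{both} $\alpha$ and $\beta$, so smallness of $a_j(u)$ says nothing directly about $\Phi(|u-\alpha|)$ being small. The quantity you should be contradicting is the $L^\Phi$-decay encoded in the definition of $\Gamma_\Phi(\alpha,\beta)$, namely
\[
\iint_{\Omega_{k_n-1}}\Phi(|u-\alpha|)\,dx\,dy+\iint_{\Omega_{k_n}}\Phi(|u-\alpha|)\,dx\,dy+\iint_{\Omega_{k_n+1}}\Phi(|u-\alpha|)\,dx\,dy \longrightarrow 0
\]
as $n\to\infty$ (after folding the ball into the strip via the $y$-periodicity of $u$). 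With that replacement the contradiction argument goes through; everything else is as you describe, with the remaining work genuinely being bookkeeping.
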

\begin{proof}
Initially, let be $u\in K_\Phi(\alpha,\beta)$ and $\psi \in C^{\infty}_0(\mathbb{R}^2)$. Then, the function $v=u+t\psi \in \Gamma_\Phi(\alpha,\beta)$ and 
\begin{equation*}\label{34}
	\iint_{\Omega}\left(\phi(|\nabla u|)\nabla u\nabla\psi+A(x,y)V'(u)\psi\right)dxdy=0.
\end{equation*}
The equality above allows us to use the same arguments found in the proof \cite[Theorem 1.1]{Alves1} to prove that 
\begin{equation*}
	\iint_{\mathbb{R}^2}\left(\phi(|\nabla u|)\nabla u\nabla\psi+A(x,y)V'(u)\psi\right)dxdy=0,~~\forall\psi\in C^{\infty}_0(\mathbb{R}^2),	
\end{equation*} 
from where it follows that $u$ is a weak solution for $(PDE)$ with $\epsilon=1$. The assumption $(\phi_2)$ permits to apply a well known regularity result developed by Lieberman \cite[Theorem 1.7]{Lieberman} to conclude that $u\in C^{1,\gamma}_{\text{loc}}(\mathbb{R}^2)$ for some $\gamma\in(0,1)$. Moreover, similar to the proof of \cite[Theorem 1.1]{Alves1}, we also have that $u$ is a heteroclinic solution from $\alpha$ to $\beta$, that is, 
$$
u(x,y)\to\alpha\text{ as }x\to-\infty\text{ and }u(x,y)\to\beta\text{ as }x\to+\infty,\text{ uniformly in }y\in\mathbb{R},
$$
and the lemma follows.
\end{proof}

Now, we will show our last result in this subsection, which ends the study of the equation $(PDE)$ in the case where $A$ is periodic in all variables.

\begin{lemma}\label{R4a}
	 Assume $(\phi_3)$ and $(V_7)$. Then, if $u\in K_\Phi(\alpha,\beta)$ we have that
	 $$
	 \alpha<u(x,y)<\beta\text{ for all }(x,y)\in\mathbb{R}^2.
	 $$ 
\end{lemma}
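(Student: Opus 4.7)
\emph{Plan.} I would argue by contradiction using a strong maximum principle derived from Trudinger's Harnack inequality \cite{Trudinger}. Suppose the set $\mathcal{Z}_\beta := \{(x,y)\in\mathbb{R}^2 : u(x,y)=\beta\}$ is non-empty (the symmetric case $u(x_0,y_0)=\alpha$ is treated analogously using the second bound in $(V_7)$ with $w=u-\alpha$). Since $u\in C^{1,\gamma}_{\mathrm{loc}}(\mathbb{R}^2)$ and $\alpha\le u\le\beta$ almost everywhere, continuity upgrades these bounds to pointwise ones, so every point of $\mathcal{Z}_\beta$ is a global maximum of $u$ and in particular $\nabla u$ vanishes there. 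The set $\mathcal{Z}_\beta$ is obviously closed; I would show it is also open, whence connectedness of $\mathbb{R}^2$ forces $u\equiv\beta$, contradicting $u(x,y)\to\alpha$ as $x\to-\infty$.

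Fix $(x_0,y_0)\in\mathcal{Z}_\beta$ and set $w:=\beta-u$, so that $w\ge 0$, $w\in C^{1,\gamma}_{\mathrm{loc}}$, $w(x_0,y_0)=0$ and $\nabla w(x_0,y_0)=0$. Since $\nabla w=-\nabla u$ and $u$ solves $(PDE)$ weakly (Lemma \ref{R4}), $w$ satisfies
\[
-\Delta_\Phi w \;=\; A(x,y)\,V'(\beta-w) \qquad \text{weakly on } \mathbb{R}^2.
\]
By continuity of $w$ and $\nabla w$ at $(x_0,y_0)$, I would choose $r>0$ so that on $B:=B_r(x_0,y_0)$ both $w\le\lambda$ (with $\lambda$ from $(V_7)$) and $|\nabla w|,\,d_2 w\le\delta$ (with $\delta$ from $(\phi_3)$). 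On $B$, hypothesis $(V_7)$ yields
\[
|A(x,y)\,V'(\beta-w)|\;\le\; d_1\|A\|_{L^\infty(B)}\,\phi(d_2 w)\,w,
\]
while $(\phi_3)$ gives both the operator $\Delta_\Phi$ (since $|\nabla w|\le\delta$) and the right-hand side (since $d_2w\le\delta$) the scaling of the $q$-Laplacian, namely $\phi(t)t\sim t^{q-1}$ for $t\in(0,\delta]$. Hence $w$ is a non-negative weak solution on $B$ of a quasilinear equation falling within the structural class covered by Trudinger's Harnack theorem, with a zero-order term of the critical growth $w^{q-1}$.

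Applying Trudinger's Harnack inequality to the non-negative function $w$ on $B_{r/2}(x_0,y_0)\subset B$ gives
\[
\sup_{B_{r/2}(x_0,y_0)} w \;\le\; C_H \inf_{B_{r/2}(x_0,y_0)} w \;=\; 0,
\]
so $w\equiv 0$ on $B_{r/2}(x_0,y_0)$, proving $\mathcal{Z}_\beta$ is open and producing the desired contradiction. The main technical obstacle is verifying that, once restricted to the small-value and small-gradient regime granted by the $C^1$-regularity of $u$ at its interior maxima, the equation for $w$ genuinely satisfies the structural hypotheses of Trudinger's Harnack inequality with the correct $q$-Laplacian growth on the principal part and the compatible growth on the zero-order term; this reduction is precisely the role of assumptions $(\phi_3)$ and $(V_7)$.
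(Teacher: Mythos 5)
Your proposal follows essentially the same route as the paper: both set $w=\beta-u$ (resp.\ $u-\alpha$ for the lower bound) and apply Trudinger's Harnack inequality, using $(\phi_3)$ and $(V_7)$ to put the equation into the required structural form. The point you flag at the end as ``the main technical obstacle'' --- that $\phi$ only has the $q$-Laplacian scaling on $(0,\delta]$, whereas Trudinger's theorem requires a globally defined vector field with that structure for all $p$ --- is exactly what the paper handles and your sketch leaves open: the paper replaces $\phi$ by a truncation $\tilde\phi$ agreeing with $\phi$ on $[0,R]$, with $R>\max\{\|\nabla u\|_{L^\infty(\mathcal{O})},\delta\}$ on a chosen compact set $\mathcal{O}$, and continued as $\phi(R)t^{q-2}/R^{q-2}$ beyond $R$. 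This yields $G(z,t,p)=\tilde\phi(|p|)p/\gamma_2$ with $|G|\le(\gamma_1/\gamma_2)|p|^{q-1}$ and $p\cdot G\ge|p|^q$ for \emph{all} $p$, while $w$ still solves the modified equation weakly on $\mathcal{O}$ because the two operators coincide along $\nabla w$. Your restriction to a small ball where $|\nabla w|\le\delta$ serves the same purpose, but to literally invoke the theorem you still must write down the global extension. The remaining differences are cosmetic: you use an open/closed/connectedness argument while the paper picks a single compact set containing both a point where $u=\beta$ and one where $u<\beta$ and obtains a direct contradiction; both are fine.
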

\begin{proof}
Let $u\in K_\Phi(\alpha,\beta)$ and observe that $\alpha\leq u\leq \beta$ on $\mathbb{R}^2$. In what follows, we will show that $u(x,y)<\beta$ for any $(x,y)\in\mathbb{R}^2$. Indeed, assume for the sake of contradiction that there exists $(x_0,y_0)\in\mathbb{R}^2$ such that $u(x_0,y_0)=\beta$. Therefore, by the geometry of $u$, we can consider a compact set $\mathcal{O}$ contained in $\mathbb{R}^2$ such that there exists $(x_1,y_1)\in\mathcal{O}$ with $u(x_1,y_1)<\beta$. Having that in mind, setting the function $\tilde{\phi}:(0,+\infty)\rightarrow (0,+\infty)$ by
	$$
	\tilde{\phi}(t)=\left\{\begin{array}{ll}
		\phi(t),&\mbox{if}\quad t\in(0,R],\\
		\dfrac{\phi(R)t^{q-2}}{R^{q-2}}, &\mbox{if}\quad t\in(R,+\infty),
	\end{array}\right.
	$$ 
	where $R>\max\{\|\nabla u\|_{L^{\infty}(\mathcal{O})},\delta\}$ and the constants $q$ and $\delta$ were given in $(\phi_3)$, a direct computation implies that there are positive real numbers $\gamma_1$ and $\gamma_2$, which dependent on $\delta$, $q$, $R$, $c_1$ and $c_2$, such that 
	\begin{equation*}\label{36}
		\tilde{\phi}(t)t\leq\gamma_1t^{q-1}~\text{ and }~\tilde{\phi}(t)t^2\geq\gamma_2 t^q~\text{ for all }~ t\geq 0.
	\end{equation*} 
	Using the function $\tilde{\phi}$, let us define the vector measurable function  $G:\mathbb{R}^2\times\mathbb{R}\times\mathbb{R}^2\to\mathbb{R}^2$  by   
	$$
	G(z,t,p)=\frac{\tilde{\phi}(|p|)p}{\gamma_2},
	$$ 
	which satisfies  
	$$
	|G(z,t,p)|\leq \frac{\gamma_1}{\gamma_2}|p|^{q-1}~\text{ and }~
	pG(z,t,p)\geq |p|^q~\text{ for all }~ (z,t,p)\in\mathbb{R}^2\times\mathbb{R}\times\mathbb{R}^2.
	$$ 
	Furthermore, we will also consider the scalar measurable function $B:\mathbb{R}^2\times\mathbb{R}\times\mathbb{R}^2\to\mathbb{R}$ given by  
	$$
	B(z,t,p)=\frac{A(z)V'(\beta-t)}{\gamma_2}.
	$$
	Now, combining $(\phi_3)$ with $(V_7)$, it is possible to ensure that for each $M>0$ there exists $C_M>0$ satisfying 
	$$
	|B(z,t,p)|\leq C_M|t|^{q-1}~~\text{ for all }~~(z,t,p)\in\mathbb{R}^2\times(-M,M)\times\mathbb{R}^2.
	$$ 
	All these information are necessary to guarantee that $G$ and $B$ fulfill the structure required in the Harnack type inequality found in Trudinger \cite[Theorem 1.1]{Trudinger}. So, setting $v(z)=\beta-u(z)$ for $z\in\mathbb{R}^2$, we infer that $v$ is a weak solution of the quasilinear equation 
	$$
	div ~G(z,v,\nabla v)+B(z,v,\nabla v)=0~~\text{in}~~\mathcal{O}. 
	$$ 
Employing \cite[Theorem 1.1]{Trudinger}, we deduce that $v=0$ on $\mathcal{O}$, that is, $u =\beta$ on $\mathcal{O}$, which contradicts the fact that $(x_1,y_1)\in\mathcal{O}$ with $u(x_1,y_1)<\beta$. Likewise, we can apply a similar argument to show that $u(x,y)>\alpha$ for any $(x,y)\in\mathbb{R}^2$, and hence the proof is completed.
\end{proof}

\subsection{The case asymptotic at infinity to a periodic function}\label{Sb2}

In this subsection we will study the existence of a heteroclinic solution for $(PDE)$ with $\epsilon=1$ and $A$ belongs to Class 2, that is, $A$ is asymptotic at infinity to a periodic function $A_p$. Moreover, unless otherwise indicated, we will consider here the conditions $(\phi_1)$-$(\phi_2)$ on $\phi$ and $(V_1)$-$(V_3)$ on $V$. The fact that we are assuming that the function $A$ is only asymptotically periodic with respect to $x$ brings a lot of difficulties and some arguments explored in the periodic case do not work anymore. 

In this section, let us consider the functional $I_p:W^{1,\Phi}_{\text{loc}}(\Omega)\to\mathbb{R}\cup\{+\infty\}$ by 
$$
I_p(w)=\sum_{j\in\mathbb{Z}}a_{p,j}(w),~~w\in W^{1,\Phi}_{\text{loc}}(\Omega),
$$
where 
$$
a_{p,j}(w)=\iint_{\Omega_j}\left(\Phi(|\nabla w|)+A_p(x,y)V(w)\right)dxdy.
$$
Moreover, we use $c_{p,\Phi}(\alpha,\beta)$ to denote the real number given by 
$$
c_{p,\Phi}(\alpha,\beta)=\inf_{w\in\Gamma_\Phi(\alpha,\beta)}I_p(w).
$$
From Subsection \ref{Sb1}, we know that there is $w_0\in\Gamma_\Phi(\alpha,\beta)$ such that 
$I_p(w_0)=c_{p,\Phi}(\alpha,\beta)$, and so,
\begin{equation}\label{001}
	c_\Phi(\alpha,\beta)\leq I(w_0)<I_p(w_0)=c_{p,\Phi}(\alpha,\beta).
\end{equation}
The inequality \eqref{001} establishes an important relation between $c_\Phi(\alpha,\beta)$ and $c_{p,\Phi}(\alpha,\beta)$, which will be useful to achieve the objective of this subsection. With these information, we are ready to prove the main result of this subsection.

\begin{proposition}\label{R5}
	There is $u\in\Gamma_\Phi(\alpha,\beta)$ such that $I(u)=c_\Phi(\alpha,\beta)$ satisfying $\alpha\leq u(x,y)\leq\beta$ almost everywhere in $\Omega$.
\end{proposition}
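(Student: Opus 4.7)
The plan is to follow the structure of Proposition \ref{R2} and to use the strict inequality $c_{\Phi}(\alpha,\beta)<c_{p,\Phi}(\alpha,\beta)$ from \eqref{001} to compensate for the loss of translation invariance of $I$.

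Start with a minimizing sequence $(u_n)\subset\Gamma_{\Phi}(\alpha,\beta)$ and truncate so that $\alpha\le u_n\le\beta$, which preserves minimality since $V\ge 0$ vanishes at $\alpha,\beta$ and the $\Phi$-energy only decreases. Applying Lemma \ref{A4} and a diagonal extraction yields, along a subsequence, $u_n\rightharpoonup u$ in $W^{1,\Phi}_{\text{loc}}(\Omega)$, $u_n\to u$ in $L^{\Phi}_{\text{loc}}(\Omega)$ and a.e., with $\alpha\le u\le\beta$ a.e. Weak lower semicontinuity on each strip $(-j,j)\times(0,1)$, followed by sending $j\to\infty$, gives $I(u)\le c_{\Phi}(\alpha,\beta)$.

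To show $u\in\Gamma_{\Phi}(\alpha,\beta)$, introduce the integers $k_n$ as in \eqref{2}: the largest $k_n\in\mathbb{Z}$ such that $\iint_{\Omega_{k-1}}\Phi(|u_n-\alpha|)dxdy\le\delta$ for all $k\le k_n$ while $\iint_{\Omega_{k_n}}\Phi(|u_n-\alpha|)dxdy>\delta$, with $\delta<\Phi(\beta-\alpha)$. If $(k_n)$ is bounded, the $L^{\Phi}_{\text{loc}}$-convergence transfers this normalization to $u$; and since $a_j(u)\to 0$ as $|j|\to\infty$ together with $A_0\le A\le\|A\|_{\infty}$ give $a_0(\tau_k u)\to 0$ as $|k|\to\infty$, the weak $W^{1,\Phi}(\Omega_0)$ limits $u^*,u^{**}$ of $\tau_k u$ satisfy $a_0(u^*)=a_0(u^{**})=0$, hence lie in $\{\alpha,\beta\}$ a.e. on $\Omega_0$. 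The normalization and $\delta<\Phi(\beta-\alpha)$ then fix $u^*=\alpha$ and $u^{**}=\beta$, exactly as in Proposition \ref{R2}.

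The main obstacle, and the genuinely new aspect of the proof, is ruling out $k_n\to-\infty$ (the case $k_n\to+\infty$ being symmetric). Consider the shifted sequence $v_n=\tau_{k_n}u_n\in\Gamma_{\Phi}(\alpha,\beta)$. Because $k_n\in\mathbb{Z}$ and $A_p$ is $1$-periodic in $x$, a change of variable gives $I_p(v_n)=I_p(u_n)$, whence
$$
I_p(v_n)=I(u_n)+\iint_{\Omega}(A_p-A)V(u_n)\,dxdy.
$$
The key claim is $\iint(A_p-A)V(u_n)\,dxdy\to 0$: the transition region of $u_n$ (where $V(u_n)$ is essentially supported, in view of the normalization) drifts to $-\infty$ with $k_n$ and so into the region where $|A-A_p|$ is arbitrarily small, while the uniform bound $\iint V(u_n)\le I(u_n)/A_0\le M/A_0$ controls the total mass of $V(u_n)$; splitting at $\{|x|<R\}\cup\{|x|\ge R\}$, the first piece is small because $u_n$ is pointwise close to $\beta$ throughout $\{|x|<R\}$ once $|k_n|$ is large, and the second is small by the $L^{\infty}$-tail decay of $A_p-A$. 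Consequently $\liminf I_p(v_n)=c_{\Phi}(\alpha,\beta)$, and since $v_n\in\Gamma_{\Phi}(\alpha,\beta)$, this contradicts $c_{p,\Phi}(\alpha,\beta)>c_{\Phi}(\alpha,\beta)$ from \eqref{001}. Therefore $(k_n)$ is bounded, $u\in\Gamma_{\Phi}(\alpha,\beta)$, and $I(u)=c_{\Phi}(\alpha,\beta)$ follows.
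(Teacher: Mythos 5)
Your overall strategy---reduce to the periodic case by exploiting $c_\Phi(\alpha,\beta)<c_{p,\Phi}(\alpha,\beta)$ when the normalizing shifts $k_n$ escape to infinity---is reasonable and is in the same spirit as the paper's argument, but the key analytic claim in the $k_n\to-\infty$ branch is unjustified and, as stated, false. You assert that $u_n$ is ``pointwise close to $\beta$ throughout $\{|x|<R\}$ once $|k_n|$ is large,'' and deduce $\iint_{\{|x|<R\}}(A_p-A)V(u_n)\,dxdy\to0$. The normalization only controls the strips $\Omega_{k-1}$ with $k\le k_n$, i.e.\ the region $x<k_n$, where $u_n$ stays $L^\Phi$-close to $\alpha$; once $k_n<-R$, the compact set $\{|x|<R\}$ lies entirely to the \emph{right} of $k_n$, where the normalization says nothing about $u_n$. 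The global bound $\iint_\Omega V(u_n)\le M/A_0$ bounds the total $V$-mass but does not prevent a fixed positive amount of it from sitting over $\{|x|<R\}$ for all $n$ --- e.g.\ if the minimizing sequence has a slowly spread-out transition or a spurious excursion back toward $\alpha$ near $x=0$. Since $A_p-A$ is continuous, strictly positive, and bounded below on $\{|x|\le R\}$, this would keep $\iint(A_p-A)V(u_n)$ away from zero. Ruling this out is exactly the content of a concentration argument (the paper's Step 1 and the associated cutoff Step 2 in the proof of Claim 2), and your proposal contains no substitute for it. A smaller, related issue: even after concentration, the closeness to $\beta$ would be in the $L^\Phi$-per-strip sense, not pointwise.

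There is a second, softer gap in the bounded-$k_n$ branch. You write that once the normalization transfers to $u$, the identifications $u^*=\alpha$ and $u^{**}=\beta$ follow ``exactly as in Proposition \ref{R2}.'' In Proposition \ref{R2} the identification $u^{**}=\beta$ is the hard part and relies on the $x$-periodicity of $A$ twice: in the identity $a_k(U_n)=a_0(x(\tau_k u_n-\alpha)+\alpha)$ and in $\sum_t a_0(\tau_t u_i)=\sum_t a_t(u_i)$ used in Part 2. For Class 2 neither equality holds, so the argument needs to be redone, not invoked verbatim. (It can be salvaged with $A_0\le A\le\|A\|_\infty$ at the cost of multiplicative constants, or, as the paper does, by passing everything through the periodic functional $I_p$.) For comparison, the paper works with the translates $\tau_k u$ of the weak limit $u$ rather than normalizing the minimizing sequence: assuming $\tau_k u\to\alpha$ as $k\to+\infty$, it selects a diagonal sequence $\tau_{k_i}u_{n_i}$ with $k_i\to+\infty$, extracts a weak limit $w$ with $I_p(w)\le c_\Phi(\alpha,\beta)$, and then shows $w\in\Gamma_\Phi(\alpha,\beta)$ via a concentration/cutoff argument, contradicting \eqref{001}. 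If you insert an analogous concentration lemma to localize the $V$-mass of $u_n$ near $x=k_n$, your shift-and-compare argument can be made rigorous; as written it is not.
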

\begin{proof}
First of all, note that there exists a minimizing sequence $(u_n)\subset\Gamma_\Phi(\alpha,\beta)$ for $I$ satisfying  
$$
\alpha\leq u_n(x,y)\leq\beta,~\forall n\in\mathbb{N}\text{ and }(x,y)\in\Omega.
$$
Moreover, there are $u\in W^{1,\Phi}_{\text{loc}}(\Omega)$ and a subsequence of $(u_n)$, still denoted by $(u_n)$, such that
\begin{equation}\label{37}
	u_n\rightharpoonup u~~\text{in}~~W_{\text{loc}}^{1,\Phi}(\Omega),
\end{equation}
\begin{equation}\label{38}
	u_n\to u~~\text{in}~~L^\Phi_{\text{loc}}(\Omega)
\end{equation}
and
\begin{equation}\label{39}
	u_n(x,y)\rightarrow u(x,y)\text{ a.e. in } \Omega.
\end{equation}
From \eqref{37}-\eqref{39},   
\begin{equation}\label{003}
	I(u)\leq c_\Phi(\alpha,\beta)
\end{equation}
and
$$
\alpha\leq u(x,y)\leq \beta\text{ a.e. in } \quad \Omega.
$$ 
Now our goal is to show that $u\in\Gamma_\Phi(\alpha,\beta)$. To achieve this goal, similar to the proof of Proposition \ref{R2}, we have that $(\tau_{k}u)_{k>0}$ is a bounded sequence in $W^{1,\Phi}(\Omega_0)$. Thereby, for some subsequence of $(\tau_ku)$, still denoted by itself, there is $u^*\in W^{1,\Phi}(\Omega_0)$ such that  
$$
\tau_ku\rightharpoonup u^*\text{ in }W^{1,\Phi}(\Omega_0)~\text{ as }~k\to+\infty,
$$
$$
\tau_ku\to u^*~\text{ in }~L^\Phi(\Omega_0)\text{ as }k\to+\infty
$$
and 
$$
\tau_ku(x,y)\to u^*(x,y)\text{ a.e on }\Omega_0\text{ as }k\to+\infty.
$$
We claim that $u^*= \alpha$ or $u^*= \beta$ a.e. in $\Omega_0$. Indeed, since $I(u)\leq c_\Phi(\alpha,\beta)$, we infer that $a_k(u)$ goes to 0 as $k$ goes to $+\infty$, and so, by change of variable, 
$$
\iint_{\Omega_0}\left(\Phi(|\nabla \tau_ku|)+A(x+k,y)V(\tau_{k}u)\right)dxdy\to0\text{ as }k\to+\infty.
$$
Consequently, by $(A_1)$,
$$
\iint_{\Omega_0}\left(\Phi(|\nabla \tau_ku|)+A_0V(\tau_{k}u)\right)dxdy\to0\text{ as }k\to+\infty,
$$
from where it follows that
$$
\iint_{\Omega_0}\left(\Phi(|\nabla u^*|)+A_0V(u^*)\right)dxdy=0.
$$
By the assumptions on $\Phi$ and $V$, we derive that $u^*= \alpha$ or $u^*= \beta$ a.e. in $\Omega_0$. Next, we claim that 
\begin{equation}\label{40}
	u^*= \beta\text{ a.e in } \Omega_0.
\end{equation}
To establish the claim above, let us assume by contradiction that $u^*= \alpha$ a.e. in $\Omega_0$. So, as a consequence, we will prove that given $\delta\in (0,\Phi(\beta-\alpha))$ there are $(u_{n_i})\subset (u_n)$, $(k_i)\subset\mathbb{N}$ and $i_*\in\mathbb{N}$ such that 
\begin{equation}\label{41}
	i_*<k_i\text{ for all }i\in\mathbb{N},  k_i\to+\infty\text{ and }n_i\to+\infty\text{ as }i\to+\infty,
\end{equation}
\begin{equation}\label{42}
	\iint_{\Omega_0}\Phi(|\tau_{j}u_{n_i}-\alpha|)dxdy<\delta\text{ and }\iint_{\Omega_0}\Phi(|\tau_{k_i}u_{n_i}-\alpha|)dxdy\geq\delta~\forall j\in[i_*,k_i-1]\cap\mathbb{N}.
\end{equation} 
Indeed, since $\tau_{k}u$ goes to $\alpha$ in $L^{\Phi}(\Omega_0)$ as $k$ goes to $+\infty$, given $\delta\in(0,\Phi(\beta-\alpha))$, there is $i_*=i_*(\delta)\in\mathbb{N}$ satisfying 
\begin{equation}\label{44}
	\iint_{\Omega_0}\Phi(|\tau_{k}u-\alpha|)dxdy<\frac{\delta}{2^{m+1}}~~\forall k\geq i_*.
\end{equation}
In particular, 
\begin{equation}\label{43}
	\iint_{\Omega_0}\Phi(|\tau_{i_*}u-\alpha|)dxdy<\frac{\delta}{2^{m+1}}.
\end{equation}
Gathering \eqref{38} and \eqref{43} with the fact that $\Phi\in\Delta_2$, we find $n_1\in\mathbb{N}$ satisfying 
$$
\iint_{\Omega_0}\Phi(|\tau_{i_*}u_{n_1}-\alpha|)dxdy<\delta.
$$
Thereby, since $u_{n_1}\in\Gamma_\Phi(\alpha,\beta)$, we may fix $k_1\geq i_*+1$ as the first natural number such that
$$
\iint_{\Omega_0}\Phi(|\tau_{j}u_{n_1}-\alpha|)dxdy<\delta\text{ and }\iint_{\Omega_0}\Phi(|\tau_{k_1}u_{n_1}-\alpha|)dxdy\geq\delta~~\forall j\in[i_*,k_1-1]\cap\mathbb{N}.
$$
On the other hand, according to \eqref{44},
$$
\iint_{\Omega_0}\Phi(|\tau_{i_*}u-\alpha|)dxdy,~\iint_{\Omega_0}\Phi(|\tau_{i_*+1}u-\alpha|)dxdy<\frac{\delta}{2^{m+1}}.
$$
Hence, in the same manner we can see that there is $n_2\in\mathbb{N}$ such that $n_2>n_1$ and 
$$
\iint_{\Omega_0}\Phi(|\tau_{i_*}u_{n_2}-\alpha|)dxdy,~\iint_{\Omega_0}\Phi(|\tau_{i_*+1}u_{n_2}-\alpha|)dxdy<\delta.
$$
Using the fact that $u_{n_2}\in\Gamma_\Phi(\alpha,\beta)$, we can find $k_2\geq i_*+2$ as the first natural number satisfying 
$$
\iint_{\Omega_0}\Phi(|\tau_{j}u_{n_2}-\alpha|)dxdy<\delta\text{ and }\iint_{\Omega_0}\Phi(|\tau_{k_2}u_{n_2}-\alpha|)dxdy\geq\delta,~~\forall j\in[i_*,k_2-1]\cap\mathbb{N}.
$$
Repeating the above argument, there are sequences $(u_{n_i})\subset (u_n)$ and $(k_i)\subset\mathbb{N}$ such that $k_i\geq i_*+i$ satisfying \eqref{41} and \eqref{42}. So, for some subsequence, there is $w\in W^{1,\Phi}_{\text{loc}}(\Omega)$ such that
\begin{equation}\label{45}
	\tau_{k_i}u_{n_i}\rightharpoonup w\text{ in }W^{1,\Phi}_{\text{loc}}(\Omega)~\text{ as }~i\to+\infty,
\end{equation}
\begin{equation}\label{46}
	\tau_{k_i}u_{n_i}\to w~\text{ in }~L^\Phi_{\text{loc}}(\Omega)\text{ as }i\to+\infty,
\end{equation}
\begin{equation}\label{47}
	\tau_{k_i}u_{n_i}(x,y)\to w(x,y)\text{ a.e. in }\Omega\text{ as }i\to+\infty
\end{equation}
and
\begin{equation}\label{48}
	\alpha\leq w(x,y)\leq \beta\text{ a.e. in }\Omega.
\end{equation} 
Now, setting the functional  
$$
a_j^i(v)=\iint_{\Omega_j}\left(\Phi(|\nabla v|)+A(x+k_i,y)V(v)\right)dxdy,~~v\in W^{1,\Phi}_{\text{loc}}(\Omega),~i\in\mathbb{N},~j\in\mathbb{Z},
$$
a simple change of variables gives us
$$
a_j^i(\tau_{k_i}u_{n_i})=a_{j+k_i}(u_{n_i}),
$$
and so, 
\begin{equation}\label{49}
	\sum_{j\in\mathbb{Z}}a_j^i(\tau_{k_i}u_{n_i})=\sum_{j\in\mathbb{Z}}a_{j}(u_{n_i})=I(u_{n_i}),~~\forall i\in\mathbb{N}.
\end{equation}
From \eqref{49}, one has 
\begin{equation}\label{50}
	a_{p,0}(\tau_kw)\to 0~\text{ as }~|k|\to+\infty. 
\end{equation}
To see this, it suffices to show that $I_p(w)\leq c_\Phi(\alpha,\beta)$. Indeed, combining the fact that $A(x+k_i,y)$ goes to $A_p(x,y)$ as $i$ goes to $+\infty$ with \eqref{47}, one gets  
$$
A(x+k_i,y)V(\tau_{k_i}u_{n_i}(x,y))\to A_p(x,y)V(w(x,y))\text{ a.e. in }\Omega.
$$  	
Therefore, the Fatou’s Lemma and \eqref{45} provide 
$$
\sum_{-j}^{j}a_{p,j}(w)\leq \liminf_{i\to+\infty}\sum_{-j}^{j}a_j^i(\tau_{k_i}u_{n_i}),~~\forall j\in\mathbb{N}.
$$
As $j$ is arbitrary, \eqref{49} guarantees that
\begin{equation}\label{002}
	I_p(w)\leq \liminf_{i\to+\infty}I(u_{n_i})=c_\Phi(\alpha,\beta),
\end{equation}
and \eqref{50} is proved. Thereby, passing
to a subsequence if necessary, a direct computation shows that
$$
\tau_kw\to \alpha\text{ or }\beta\text{ in }L^\Phi(\Omega_0)\text{ as }k\to-\infty
$$
and 
$$
\tau_kw\to \alpha\text{ or }\beta\text{ in }L^\Phi(\Omega_0)\text{ as }k\to+\infty.
$$
Our goal now is to ensure that $w\in\Gamma_\Phi(\alpha,\beta)$.
\\ \textbf{Claim 1:} $\tau_kw\to \alpha\text{ in }L^\Phi(\Omega_0)\text{ as }k\to-\infty.$

Indeed, note first that for each $j\in\mathbb{N}$, there is $i_0=i_0(j)\in\mathbb{N}$ such that $k_i-1\geq k_i-j\geq i_*$ for all $i\geq i_0$, where $i_*\in\mathbb{N}$ was given in \eqref{41}. According to \eqref{42},
$$
\iint_{\Omega_0}\Phi(|\tau_{k_i-j}u_{n_i}-\alpha|)dxdy<\delta,~~\forall j\in\mathbb{N},
$$
that is, 
$$
\iint_{\Omega_{-j}}\Phi(|\tau_{k_i}u_{n_i}-\alpha|)dxdy<\delta,~~\forall j\in\mathbb{N}.
$$
Invoking \eqref{46}, we can increase $i$ if necessary to obtain
$$
\iint_{\Omega_0}\Phi(|\tau_{-j}w-\alpha|)dxdy\leq\delta,~\forall j\in\mathbb{N}.
$$
Our claim is proved by noting that $\delta\in(0,\Phi(\beta-\alpha))$. 
\\
\textbf{Claim 2:} $\tau_kw\to\beta\text{ in }L^\Phi(\Omega_0)\text{ as }k\to+\infty.$

Assume by contradiction that $\tau_kw\to\alpha\text{ in }L^\Phi(\Omega_0)\text{ as }k\to+\infty.$ Let us break down the proof of Claim 2 into two steps.
\\ 
\textbf{Step 1:} There are $\epsilon_0>0$ and  $i_0\in\mathbb{N}$ such that 
\begin{equation}\label{52}
	\int_{k_i-1}^{k_i+1}\int_0^1\left(\Phi(|\nabla u_{n_i}|)+V(u_{n_i})\right)dxdy\geq\frac{\epsilon_0}{\tilde{A}},~~\forall i\geq i_0,
\end{equation}
where $\tilde{A}=\min\{1,A_0\}$.

Indeed, if this does not occur, then there is a subsequence $(\tau_{k_{i_j}}u_{n_{i_j}})$ of $(\tau_{k_i}u_{n_i})$ such that 
$$
\int_{-1}^{1}\int_0^1\left(\Phi(|\nabla \tau_{k_{i_j}}u_{n_{i_j}}|)+V(\tau_{k_{i_j}}u_{n_i})\right)dxdy\to0\text{ as }j\to+\infty.
$$
Recalling that $(u_n)$ is bounded in $W^{1,\Phi}_{\text{loc}}(\Omega)$, then going to a subsequence if necessary, there exists $v\in W^{1,\Phi}((-1,1)\times(0,1))$ such that 
\begin{equation}\label{51*}
\tau_{k_{i_j}}u_{n_{i_j}}\rightharpoonup v\text{ in }W^{1,\Phi}((-1,1)\times (0,1))\text{ and }\tau_{k_{i_j}}u_{n_{i_j}}\to v\text{ in }L^{\Phi}((-1,1)\times (0,1)).
\end{equation}
From the assumptions on $\Phi$ and $V$, we have $v=\alpha$ or $v=\beta$ a.e. in $(-1,1)\times(0,1)$. On the other hand, from \eqref{42}, 
\begin{equation}\label{51}
	\iint_{\Omega_{-1}}\Phi(|\tau_{k_i}u_{n_i}-\alpha|)dxdy<\delta\text{ and }\iint_{\Omega_0}\Phi(|\tau_{k_i}u_{n_i}-\alpha|)dxdy\geq\delta,~~\forall i\in\mathbb{N}.
\end{equation}
Finally, taking the limit of $k_i \to +\infty$ in  \eqref{51} and using the limit \eqref{51*} we find a contradiction, finishing the proof of Step 1. 

In what follows, fixing $\epsilon \in (0,\frac{\epsilon_0}{2})$ and increasing $i_0$ if necessary, we obtain  
\begin{equation}\label{53}
	I(u_{n_i})\leq c_\Phi(\alpha,\beta)+\frac{\epsilon}{4},~~\forall i\geq i_0.
\end{equation}
\\
\textbf{Step 2:} There exist $j\in\mathbb{N}$ and $i\geq i_0$ large enough satisfying 
	\begin{equation}\label{54}
		a_{p,j}\left((\tau_{k_i}u_{n_i}-\alpha)(x-j)+\alpha\right)\leq \frac{\epsilon}{2}.
\end{equation}

The proof of Step 2 follows as in the proof of Proposition \ref{R2}, and so, it will be omitted. In the sequel, let us consider $j\in\mathbb{N}$ and $i\geq i_0$ as in Step 2. Setting the function
$$ 
U_{j,i}(x,y)=\left\{\begin{array}{lll}
	\alpha,&\mbox{if}\quad x\leq j&\mbox{and}\quad y\in (0,1),\\ 
	(\tau_{k_i}u_{n_i}(x,y)-\alpha)(x-j)+\alpha,&\mbox{if}\quad j\leq x\leq j+1&\mbox{and}\quad y\in (0,1),\\
	\tau_{k_i}u_{n_i}(x,y),&\mbox{if}\quad x>j+1&\mbox{and}\quad y\in (0,1),
\end{array}\right.
$$
it is simple to check that $U_{j,i}\in\Gamma_\Phi(\alpha,\beta)$ and
\begin{equation}\label{55}
	c_{p,\Phi}(\alpha,\beta)\leq I_p(U_{j,i})=a_{p,j}(U_{j,i})+\sum_{t=j+1}^{+\infty}a_{p,t}(\tau_{k_i}u_{n_i})=a_{p,j}(U_{j,i})+\sum_{t=j+1+k_i}^{+\infty}a_{p,t}(u_{n_i}).
\end{equation}
We claim that increasing $i_0$ if necessary, one gets 
\begin{equation}\label{56}
	\sum_{t=j+1+k_i}^{+\infty}a_{p,t}(u_{n_i})\leq \sum_{t=j+1+k_i}^{+\infty}a_{t}(u_{n_i})+\frac{\epsilon}{4}. 
\end{equation}
Indeed, since the function $A$ belongs to Class 2, we infer that there is $R>0$ such that
$$
A_p(x,y)-A(x,y)\leq \frac{A_0\epsilon}{4C},~~\forall |x|\geq R\text{ and }\forall y\in(0,1),
$$
where $C>0$ is a constant such that $I(u_n)\leq C$ for all $n\in\mathbb{N}$. Consequently, 
$$
\int_{R}^{+\infty}\int_0^1\left(A_p(x,y)-A(x,y)\right)V(u_{n_i})dxdy\leq\frac{A_0\epsilon}{4C}\int_{R}^{+\infty}\int_0^1V(u_{n_i})dxdy\leq\frac{\epsilon}{4},
$$
and therefore, increasing $i$ if necessary the last inequality is sufficient to justify \eqref{56}. In view of \eqref{55} and \eqref{56}, one has 
\begin{equation}\label{57}
	c_{p,\Phi}(\alpha,\beta)\leq a_{p,j}(U_{j,i})+\sum_{t=j+1+k_i}^{+\infty}a_{t}(u_{n_i})+\frac{\epsilon}{4}.
\end{equation}
On the other hand, according to Step 1,  
$$
\sum_{t=j+1+k_i}^{+\infty}a_{t}(u_{n_i})+\epsilon_0\leq \sum_{t=j+1+k_i}^{+\infty}a_{t}(u_{n_i})+\tilde{A}\int_{k_i-1}^{k_i+1}\int_0^1\left(\Phi(|\nabla u_{n_i}|)+V(u_{n_i})\right)dxdy\leq I(u_{n_i}),
$$
which together with \eqref{57} yields that
$$
c_{p,\Phi}(\alpha,\beta)\leq a_{p,j}(U_{j,i})+I(u_{n_i})-\epsilon_0+\frac{\epsilon}{4}.
$$
This together with \eqref{53} leads to
$$
c_{p,\Phi}(\alpha,\beta)\leq a_{p,j}(U_{j,i})+c_\Phi(\alpha,\beta)+\frac{\epsilon}{2}-\epsilon_0.
$$
Recalling that $\epsilon \in (0,\frac{\epsilon_0}{2})$ and using \eqref{54}, we arrive at
$$
c_{p,\Phi}(\alpha,\beta)\leq c_\Phi(\alpha,\beta)+\epsilon-\epsilon_0<c_\Phi(\alpha,\beta)-\frac{\epsilon_0}{2},
$$
contradicting \eqref{001}. This proves the Claim 2.

Finally, by virtue of Claims 1 and 2, we infer that $w\in\Gamma_\Phi(\alpha,\beta)$. Furthermore,  from \eqref{002} we also have 
$$
c_{p,\Phi}(\alpha,\beta)\leq I_p(w)\leq c_\Phi(\alpha,\beta),
$$
obtaining a new contradiction, and our claim \eqref{40} is proved. As a byproduct, 
\begin{equation}\label{571}
	\tau_ku\to\beta~\text{ in }~L^\Phi(\Omega_0)\text{ as }k\to+\infty.
\end{equation}
A similar argument works to prove that
\begin{equation}\label{572}
	\tau_ku\to\alpha~\text{ in }~L^\Phi(\Omega_0)\text{ as }k\to-\infty.
\end{equation}
Combining \eqref{571} and \eqref{572} with \eqref{003} we get precisely the assertion of the proposition.
\end{proof}

Considering here $K_\Phi(\alpha,\beta)$ as described in Subsection \ref{Sb1}, the same argument explored in that subsection guarantees that $K_\Phi(\alpha,\beta)$ is a non-empty set and allows us to write the following result.

\begin{lemma}\label{R5a}
	There exists a weak solution $u$ of $(PDE)$ with $\epsilon=1$, such that $u \in K_\Phi(\alpha,\beta)\cap C^{1,\gamma}_{\text{loc}}(\mathbb{R}^2)$ for some $\gamma\in (0,1)$ and it is heteroclinic solution from $\alpha$ to $\beta$ and 1-periodic in $y$. Moreover, under conditions $(\phi_3)$ and $(V_7)$ we have that 
	$$
	\alpha<u(x,y)<\beta\text{ for all }(x,y)\in\mathbb{R}^2.
	$$
\end{lemma}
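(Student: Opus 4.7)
The plan is to transplant the three-step argument of Subsection \ref{Sb1} (Lemmas \ref{R3}--\ref{R4a}) into the asymptotically periodic setting, using Proposition \ref{R5} as the replacement for Proposition \ref{R2}. The key observation is that $(A_2)$ and $(A_3)$ are part of the definition of Class~2, and every remaining ingredient of Subsection \ref{Sb1}---the reflection trick, the Euler--Lagrange derivation, the Lieberman regularity, and the Trudinger Harnack comparison---is purely local and therefore insensitive to the asymptotic behavior of $A$ at infinity.

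First, to see that $K_\Phi(\alpha,\beta)$ is non-empty, I would start from the minimizer $u\in\Gamma_\Phi(\alpha,\beta)$ supplied by Proposition \ref{R5}, write $I(u)=I_1(u)+I_2(u)$ with
\[
I_1(w)=\iint_{\mathbb{R}\times(0,1/2)}\mathcal{L}(w)\,dxdy,\qquad I_2(w)=\iint_{\mathbb{R}\times(1/2,1)}\mathcal{L}(w)\,dxdy,
\]
select the half with smaller energy, and reflect it across $y=1/2$ to produce a new function $v$. The evenness in $y$ from $(A_2)$ combined with the $1$-periodicity in $y$ from $(A_3)$ guarantees that $v\in\Gamma_\Phi(\alpha,\beta)$, $I(v)=c_\Phi(\alpha,\beta)$, $v(x,0)=v(x,1)$ for every $x\in\mathbb{R}$ and $\alpha\leq v\leq\beta$ a.e., so $v\in K_\Phi(\alpha,\beta)$.

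Next, given $u\in K_\Phi(\alpha,\beta)$, I would extend $u$ periodically in $y$ with period $1$ to all of $\mathbb{R}^2$. For any $\psi\in C_0^\infty(\mathbb{R}^2)$ the perturbation $u+t\psi$ still lies in $\Gamma_\Phi(\alpha,\beta)$, so minimality yields
\[
\iint_\Omega\bigl(\phi(|\nabla u|)\nabla u\cdot\nabla\psi+A(x,y)V'(u)\psi\bigr)\,dxdy=0.
\]
Using the $y$-periodicity of $u$ and of $A$ (from $(A_3)$) to tile the support of $\psi$ by $y$-translates of $\Omega$, this identity extends to the whole plane, exactly as in the proof of \cite[Theorem 1.1]{Alves1}. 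The regularity $u\in C^{1,\gamma}_{\text{loc}}(\mathbb{R}^2)$ then follows from Lieberman \cite[Theorem 1.7]{Lieberman} under $(\phi_2)$, and combining this with the $L^\Phi(\Omega_0)$ convergences $\tau_k u\to\alpha,\beta$ and the interior $C^{1,\gamma}$-control coming from the finite-energy property gives the uniform-in-$y$ heteroclinic asymptotics, as in Lemma \ref{R4}.

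For the strict inequalities under $(\phi_3)$ and $(V_7)$ I would copy the Harnack argument of Lemma \ref{R4a} essentially verbatim: assume $u(x_0,y_0)=\beta$ at some point, choose a compact neighborhood $\mathcal{O}$ also containing a point where $u<\beta$ (available thanks to the heteroclinic asymptotics just proved), truncate $\phi$ to a polynomially growing $\tilde{\phi}$, set $v=\beta-u$, and verify the structural bounds on $G(z,v,\nabla v)=\tilde{\phi}(|\nabla v|)\nabla v/\gamma_2$ and $B(z,v,\nabla v)=A(z)V'(\beta-v)/\gamma_2$ required by \cite[Theorem 1.1]{Trudinger}. The Harnack inequality forces $v\equiv 0$ on $\mathcal{O}$, a contradiction; the lower bound $u>\alpha$ follows symmetrically. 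The main potential snag I anticipate is the density/extension step for the weak formulation, but because $\psi$ has compact support only finitely many $y$-translates of $\Omega$ are needed to cover it, so no genuinely new estimate enters and everything else is a direct quotation of Subsection \ref{Sb1}.
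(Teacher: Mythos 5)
Your proposal is correct and follows exactly the route the paper intends: the paper itself states only that ``the same argument explored in that subsection guarantees that $K_\Phi(\alpha,\beta)$ is a non-empty set and allows us to write the following result,'' and your step-by-step transplantation of Lemmas \ref{R3}--\ref{R4a} with Proposition \ref{R5} in place of Proposition \ref{R2} is precisely what that remark envisions. Your observation that $(A_2)$--$(A_3)$ belong to Class~2 and that the reflection, Euler--Lagrange, Lieberman, and Trudinger steps are all local (hence indifferent to the asymptotic behavior of $A$ in $x$) correctly identifies why no new estimate is needed.
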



\subsection{The proof of Theorem \ref{T4}}\label{Sb3}
This theorem is an immediate consequence of the Lemmas \ref{R4}, \ref{R4a} and \ref{R5a}.  


\subsection{The proof of Theorem \ref{T5}}\label{Sb4}

The goal of this section is to establish the proof of Theorem \ref{T5}. In this specific case, we are considering that the function $A$ belongs to Class 3. In \cite{Alves0}, Alves called this class of {\it Rabinowitz’s condition}, because an assumption like that has been introduced by Rabinowitz \cite[Theorem 4.33]{Rabinowitz2} to build up a variational framework to study the existence of solution for a partial differential equation of the type
$$
-\epsilon^2\Delta u+A(x)u=f(u)~\text{ in }~\mathbb{R}^N,
$$
where $\epsilon>0$, $f:\mathbb{R}\to\mathbb{R}$ is a continuous function with subcritical growth and $A:\mathbb{R}^N\to\mathbb{R}$ is a continuous
function satisfying
$$
0<\inf_{x\in\mathbb{R}^N}A(x)<\liminf_{|x|\to+\infty} A(x).
$$
Now we will mainly focus on some preliminary results that are crucial in our approach. As a beginning, let us denote by   $I_{\epsilon},I_{\infty}:W^{1,\Phi}_{\text{loc}}(\Omega)\to\mathbb{R}\cup \{+\infty\}$ the following functionals 
$$
I_{\epsilon}(v)=\sum_{j\in\mathbb{Z}}a_{\epsilon,j}(v)~\text{ and }~I_{\infty}(v)=\sum_{j\in\mathbb{Z}}a_{\infty,j}(v),
$$
where  
$$
a_{\epsilon,j}(v)=\iint_{\Omega_{j}}\left(\Phi(|\nabla v|)+A(\epsilon x,y)V(v)\right)dxdy
$$ 
and
$$
a_{\infty,j}(v)=\iint_{\Omega_{j}}\left(\Phi(|\nabla v|)+A_{\infty}V(v)\right)dxdy.
$$ 
Moreover, we indicate by $c_{\epsilon,\Phi}(\alpha,\beta)$ and $c_{\infty,\Phi}(\alpha,\beta)$ the real numbers  
$$
c_{\epsilon,\Phi}(\alpha,\beta)=\inf_{v\in\Gamma_\Phi(\alpha,\beta)} I_\epsilon(v)~\text{ and }~c_{\infty,\Phi}(\alpha,\beta)=\inf_{v\in\Gamma_\Phi(\alpha,\beta)}I_\infty(v).
$$ 

Here we would like to emphasize that throughout this subsection the potential $V$ satisfies the conditions $(V_1)$-$(V_3)$. The next lemma establishes an important relation between the real numbers $c_{\epsilon,\Phi}(\alpha,\beta)$ and $c_{\infty,\Phi}(\alpha,\beta)$, which will play an essential rule in our approach. 

\begin{lemma}\label{R6}
	According to the notation above,  
	$$
	\limsup_{\epsilon\rightarrow 0^+}c_{\epsilon,\Phi}(\alpha,\beta)<c_{\infty,\Phi}(\alpha,\beta).
	$$
\end{lemma}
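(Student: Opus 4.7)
The plan is to use a minimizer of $I_\infty$, suitably truncated in $x$, as a test function for $c_{\epsilon,\Phi}(\alpha,\beta)$ and to exploit the strict gap $A(0,y)<A_\infty$ furnished by Class 3.

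First I would observe that $I_\infty$ has the constant potential $A_\infty$, so it falls into the periodic framework of Subsection \ref{Sb1} (with $A_p\equiv A_\infty$). Proposition \ref{R2} combined with Lemma \ref{R4} then produces a minimizer $w_0\in\Gamma_\Phi(\alpha,\beta)$ with $I_\infty(w_0)=c_{\infty,\Phi}(\alpha,\beta)$, $\alpha\leq w_0\leq\beta$, $w_0\in C^{1,\gamma}_{\text{loc}}(\mathbb{R}^2)$, and which is heteroclinic from $\alpha$ to $\beta$. The continuity of $w_0$ together with the heteroclinic asymptotics forces $\{w_0\in(\alpha,\beta)\}$ to be a non-empty open set, whence
$$
K:=\iint_{(-1,1)\times(0,1)} V(w_0)\,dx\,dy>0.
$$

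Next I would truncate $w_0$ to localise the support of $V$. For each integer $N\geq 2$, define $w_N$ to equal $\alpha$ for $x\leq -N$, $w_0$ on $\{|x|\leq N-1\}\times(0,1)$, $\beta$ for $x\geq N$, and the obvious linear interpolation in $x$ on the two transition strips of width one (the endpoint values $w_0(-N+1,y)$, $w_0(N-1,y)$ approach $\alpha,\beta$ uniformly in $y$ by the heteroclinic property). Then $w_N\in\Gamma_\Phi(\alpha,\beta)$, the support of $V(w_N)$ is contained in $\{|x|\leq N\}\times(0,1)$, and $I_\infty(w_N)\to I_\infty(w_0)=c_{\infty,\Phi}(\alpha,\beta)$ as $N\to\infty$, once one checks that the two interpolation strips contribute a vanishing amount.

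Using $w_N$ as a test function for $c_{\epsilon,\Phi}$, a direct computation gives
$$
I_\epsilon(w_N)-I_\infty(w_N)=\iint_{\{|x|\leq N\}\times(0,1)}\bigl[A(\epsilon x,y)-A_\infty\bigr]V(w_N)\,dx\,dy.
$$
Since the domain of integration is compact in $x$ and $A$ is continuous on $\mathbb{R}^2$, one has $A(\epsilon x,y)\to A(0,y)$ uniformly on this set as $\epsilon\to 0^+$. Setting $\rho:=A_\infty-\sup_{y\in[0,1]}A(0,y)>0$ (Class 3) and using $\iint V(w_N)\,dx\,dy\geq K$ for $N\geq 2$, one obtains
$$
\lim_{\epsilon\to 0^+} I_\epsilon(w_N)=I_\infty(w_N)+\iint[A(0,y)-A_\infty]\,V(w_N)\,dx\,dy\leq I_\infty(w_N)-\rho K.
$$
Fixing $N$ large enough that $I_\infty(w_N)<c_{\infty,\Phi}(\alpha,\beta)+\rho K/2$ then yields
$$
\limsup_{\epsilon\to 0^+} c_{\epsilon,\Phi}(\alpha,\beta)\leq c_{\infty,\Phi}(\alpha,\beta)-\frac{\rho K}{2},
$$
which is the desired strict inequality.

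I expect the main technical point to be the verification that $I_\infty(w_N)\to I_\infty(w_0)$: because of the $\Phi$-Laplacian (rather than $L^2$) setting, the vanishing of the interpolation-strip contribution relies on the $\Delta_2$-condition on $\Phi$ together with the heteroclinic decay $w_0(\pm N,\cdot)\to\alpha,\beta$ and the fact that $\nabla w_0\in L^\Phi_{\text{loc}}$. Once this step is settled, the remainder is a routine comparison of test-function energies, and the truncation serves only to bypass any implicit boundedness hypothesis on $A$ at infinity.
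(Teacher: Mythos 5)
The overall strategy --- truncating the $I_\infty$-minimizer to obtain a test function $w_N$ with $V(w_N)$ supported in $\{|x|\le N\}$, then comparing $I_\epsilon(w_N)$ with $I_\infty(w_N)$ via the uniform limit $A(\epsilon x,y)\to A(0,y)$ and the gap $\rho=A_\infty-\sup_y A(0,y)>0$ --- is the right one, and it is essentially the argument the paper invokes (via \cite[Lemma 4.1]{Alves0}). Two points, however, do not hold up as stated.

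First, you assert $K:=\iint_{(-1,1)\times(0,1)}V(w_0)\,dxdy>0$ from the fact that $\{w_0\in(\alpha,\beta)\}$ is a non-empty open set, but this set need not meet $(-1,1)\times(0,1)$: the minimizer obtained from Proposition \ref{R2} is only normalized so that $\iint_{\Omega_0}\Phi(|w_0-\alpha|)\ge\delta$, which is compatible with $w_0\equiv\beta$ on that window. The fix is immediate: by continuity and the heteroclinic asymptotics there is some $(x_0,y_0)$ with $w_0(x_0,y_0)\in(\alpha,\beta)$, so one should take $K=\iint_{(-T,T)\times(0,1)}V(w_0)\,dxdy$ for $T$ large enough that this is positive and restrict to $N\ge T+1$ (or, since $A_\infty$ is constant, normalize $w_0$ by an integer translation as in Lemma \ref{R1}).

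Second, the interpolation you describe joins the boundary traces $w_0(\pm(N-1),y)$ linearly in $x$ to the constants $\alpha,\beta$; on each transition strip this produces $\partial_y w_N(x,y)=(x+N)\,\partial_y w_0(-N+1,y)$ (and symmetrically on the right), and neither the uniform decay $w_0(\pm N,\cdot)\to\alpha,\beta$ nor $\nabla w_0\in L^\Phi_{\text{loc}}$ controls the trace integral $\int_0^1\Phi(|\partial_y w_0(\pm(N-1),y)|)\,dy$. You should instead use the cutoff the paper itself employs in the construction of $U_n$ in Proposition \ref{R2}, namely $w_N(x,y)=\alpha+(x+N)\bigl(w_0(x,y)-\alpha\bigr)$ on $[-N,-N+1]$ and its mirror on the right. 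Then by Lemma \ref{A5}-(a), $\Phi(|\nabla w_N|)\le 4^m\bigl(\Phi(|\partial_x w_0|)+\Phi(|w_0-\alpha|)+\Phi(|\partial_y w_0|)\bigr)$ on the strip, and the integrals of all three terms over $\Omega_{-N}$ vanish as $N\to\infty$ because $I_\infty(w_0)<\infty$ and $w_0\in\Gamma_\Phi(\alpha,\beta)$; the potential term on the strip is handled similarly since $|w_N-\alpha|\le|w_0-\alpha|$. With these two corrections the rest of your computation is fine.
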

\begin{proof}
The proof is similar to that discussed in the proof of \cite[Lemma 4.1]{Alves0} and its proof is omitted. 
\end{proof}

We are now ready to prove the following result.

\begin{proposition}\label{R7}
	There exists $\epsilon_0>0$ such that for each $\epsilon\in(0,\epsilon_0)$ there is $u_\epsilon\in\Gamma_\Phi(\alpha,\beta)$ satisfying $I_\epsilon(u_\epsilon)=c_{\epsilon,\Phi}(\alpha,\beta)$ and $\alpha\leq u_\epsilon(x,y)\leq\beta$ almost everywhere $(x,y)\in\Omega$.
\end{proposition}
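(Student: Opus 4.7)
The plan is to mirror, line by line, the proof of Proposition \ref{R5}, replacing the asymptotically periodic functional $I_p$ by the constant-coefficient functional $I_\infty$ and using Lemma \ref{R6} in place of the strict inequality \eqref{001}. Accordingly, first I fix $\epsilon_0>0$ from Lemma \ref{R6} so that
$$
c_{\epsilon,\Phi}(\alpha,\beta)<c_{\infty,\Phi}(\alpha,\beta),\quad \forall \epsilon\in(0,\epsilon_0),
$$
and work throughout with $\epsilon$ in this range. Taking a minimizing sequence $(u_n)\subset\Gamma_\Phi(\alpha,\beta)$ for $I_\epsilon$ and replacing it by its pointwise truncation $\max\{\alpha,\min\{u_n,\beta\}\}$, I may assume $\alpha\leq u_n\leq\beta$ on $\Omega$. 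Boundedness in $W^{1,\Phi}_{\text{loc}}(\Omega)$ and a diagonal argument based on Lemma \ref{A4} produce $u_\epsilon\in W^{1,\Phi}_{\text{loc}}(\Omega)$ with $u_n\rightharpoonup u_\epsilon$ in $W^{1,\Phi}_{\text{loc}}(\Omega)$, $u_n\to u_\epsilon$ in $L^\Phi_{\text{loc}}(\Omega)$ and a.e.\ on $\Omega$; weak lower semicontinuity of $I_\epsilon$ then gives $I_\epsilon(u_\epsilon)\leq c_{\epsilon,\Phi}(\alpha,\beta)$ and $\alpha\leq u_\epsilon\leq \beta$ a.e.

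The main obstacle, and exactly where Lemma \ref{R6} enters, is showing $u_\epsilon\in\Gamma_\Phi(\alpha,\beta)$. Because $I_\epsilon(u_\epsilon)<+\infty$, one has $a_{\epsilon,k}(u_\epsilon)\to 0$ as $|k|\to+\infty$, and $(\tau_k u_\epsilon)$ is bounded in $W^{1,\Phi}(\Omega_0)$. Along subsequences I extract weak/$L^\Phi$/pointwise limits $u^{\pm}$ as $k\to\pm\infty$. Using $A(\epsilon(x+k),y)\geq A_0>0$ from $(A_1)$ together with Fatou, the limits must satisfy
$$
\iint_{\Omega_0}\bigl(\Phi(|\nabla u^{\pm}|)+A_0 V(u^{\pm})\bigr)\,dx\,dy=0,
$$
forcing $u^{\pm}\in\{\alpha,\beta\}$ a.e.\ on $\Omega_0$. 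One must then exclude $u^{-}=\beta$ and $u^{+}=\alpha$; I treat $u^{+}=\alpha$ since the other case is symmetric.

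Assuming by contradiction $u^{+}=\alpha$, I extract indices $k_i\to+\infty$, $n_i\to+\infty$ exactly as in the construction of $(u_{n_i})$, $(k_i)$ in the proof of Proposition \ref{R5} (eqs. \eqref{41}--\eqref{42}). The analogues of Step 1 and Step 2 there then furnish $\epsilon_0>0$, $j\in\mathbb{N}$ and $i$ large together with the gluing
$$
U_{j,i}(x,y)=\begin{cases}\alpha,& x\leq j,\\ (\tau_{k_i}u_{n_i}(x,y)-\alpha)(x-j)+\alpha,& j\leq x\leq j+1,\\ \tau_{k_i}u_{n_i}(x,y),& x>j+1,\end{cases}
$$
which lies in $\Gamma_\Phi(\alpha,\beta)$. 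The main modification compared with Proposition \ref{R5} is in the tail: by the liminf condition of Class 3, for any $\eta>0$ one has $A_\infty\leq A(\epsilon(x+k_i),y)+\eta$ whenever $i$ and $|x|$ are large, and since $V(u_{n_i})$ integrates to something controlled by $I_\epsilon(u_{n_i})/A_0$, the substitution of $A_\infty$ for $A(\epsilon x,y)$ in the tail costs only a small error. Combining this with the inequality $I_\epsilon(u_{n_i})\leq c_{\epsilon,\Phi}(\alpha,\beta)+\tilde\epsilon/4$ (choosing $\tilde\epsilon<\epsilon_0/2$) and with the transition lower bound $a_{\epsilon,k_i-1}(u_{n_i})+a_{\epsilon,k_i}(u_{n_i})\geq \epsilon_0/\tilde A$, the estimate
$$
c_{\infty,\Phi}(\alpha,\beta)\leq I_\infty(U_{j,i})\leq a_{\infty,j}(U_{j,i})+I_\epsilon(u_{n_i})-\bigl(a_{\epsilon,k_i-1}(u_{n_i})+a_{\epsilon,k_i}(u_{n_i})\bigr)+o(1)
$$
yields $c_{\infty,\Phi}(\alpha,\beta)\leq c_{\epsilon,\Phi}(\alpha,\beta)-\epsilon_0/2$, contradicting Lemma \ref{R6}. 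Hence $u^{+}=\beta$ and analogously $u^{-}=\alpha$, so $u_\epsilon\in\Gamma_\Phi(\alpha,\beta)$. The equality $I_\epsilon(u_\epsilon)=c_{\epsilon,\Phi}(\alpha,\beta)$ then follows by the standard $\epsilon$-approximation argument at the end of Proposition \ref{R2}.
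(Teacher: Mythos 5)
Your framework is the right one (Lemma \ref{R6} in place of \eqref{001}, and the opening steps --- minimizing sequence, truncation to $[\alpha,\beta]$, diagonal extraction, weak lower semicontinuity, identification of $u^{\pm}$ with $\alpha$ or $\beta$ --- match the paper's), but there is a genuine gap in the contradiction argument: you never introduce the diagonal limit $w_\epsilon$ of $(\tau_{k_i}u_{n_i})$, and without it the Step 2 estimate you invoke is unjustified and the case analysis is incomplete.

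Concretely, the gluing $U_{j,i}$ must be performed at some $j\geq 0$ so that the tail of $I_\infty(U_{j,i})$ starts past the transition strip $\Omega_{k_i-1}\cup\Omega_{k_i}$, from which Step 1 subtracts $\tilde\epsilon_0/\tilde A$. Step 2 then requires $a_{\infty,j}\bigl((\tau_{k_i}u_{n_i}-\alpha)(x-j)+\alpha\bigr)\leq\tilde\epsilon/2$, i.e.\ that $u_{n_i}$ be close to $\alpha$ on $\Omega_{j+k_i}$ with $j+k_i\geq k_i$. But \eqref{42} only controls $u_{n_i}$ on $\Omega_\ell$ for $\ell\in[i_*,k_i-1]$, and at $\ell=k_i$ the $L^\Phi$ mass is $\geq\delta$ by construction. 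Your hypothesis $u^+=\alpha$ controls $\tau_ku_\epsilon$ for each fixed large $k$, and $u_n\to u_\epsilon$ in $L^\Phi_{\text{loc}}$ controls $u_n$ on each fixed compact set, but neither gives the joint control over $u_{n_i}$ on $\Omega_{j+k_i}$ as both indices blow up together. The paper obtains exactly this through an inner contradiction: take $w_\epsilon=\lim_i\tau_{k_i}u_{n_i}$ in $W^{1,\Phi}_{\text{loc}}(\Omega)$, prove $I_\infty(w_\epsilon)\leq c_{\epsilon,\Phi}(\alpha,\beta)$ using Fatou and $\liminf_iA(\epsilon x+\epsilon k_i,y)=A_\infty$, verify $\tau_kw_\epsilon\to\alpha$ as $k\to-\infty$, and dichotomize on $\tau_kw_\epsilon$ as $k\to+\infty$. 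If the limit is $\alpha$, that is the data under which Step 2 runs (with $w_\epsilon$ playing the role of $u$ in Proposition \ref{R2} and $\tau_{k_i}u_{n_i}$ the role of $u_n$), and the gluing contradicts Lemma \ref{R6}. If the limit is $\beta$, then $w_\epsilon\in\Gamma_\Phi(\alpha,\beta)$, so $c_{\infty,\Phi}(\alpha,\beta)\leq I_\infty(w_\epsilon)\leq c_{\epsilon,\Phi}(\alpha,\beta)$, again contradicting Lemma \ref{R6}. Your write-up handles only a fragment of the first branch, omits the second, and never establishes $I_\infty(w_\epsilon)\leq c_{\epsilon,\Phi}(\alpha,\beta)$, so the argument does not close.
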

\begin{proof}
The idea here is to use a variant of the proof of Proposition \ref{R5} to establish the proposition. First of all, thanks to Lemma \ref{R6} we may fix $\epsilon_0>0$ small enough verifying
\begin{equation}\label{66}
	c_{\epsilon,\Phi}(\alpha,\beta)<c_{\infty,\Phi}(\alpha,\beta),~~\forall \epsilon\in (0,\epsilon_{0}).
\end{equation}
Now, arguing as in Subsection \ref{Sb1}, for each $\epsilon\in(0,\epsilon_0)$ there exist a minimizing sequence $(u_n)\subset\Gamma_\Phi(\alpha,\beta)$ for $I_\epsilon$ and $u_\epsilon\in W^{1,\Phi}_{\text{loc}}(\Omega)$ such that
$$
\alpha\leq u_n(x,y)\leq\beta,~\forall (x,y)\in\Omega~\text{and}~\forall n\in\mathbb{N},
$$ 
$$
u_n\rightharpoonup u_\epsilon\text{ in }W^{1,\Phi}_{\text{loc}}(\Omega),
$$
$$
u_n\rightarrow u_\epsilon\text{ in }L^{\Phi}_{\text{loc}}(\Omega),
$$
$$
u_n(x,y)\rightarrow u_\epsilon(x,y)~~\text{a.e in }\Omega,
$$ 
$$
\alpha\leq u_\epsilon(x,y)\leq\beta~~\text{a.e in } \Omega
$$
and
\begin{equation}\label{67}
	I_{\epsilon}(u_\epsilon)\leq c_{\epsilon,\Phi}(\alpha,\beta).
\end{equation}
By a similar argument  to the one used in the proof of Proposition \ref{R5}, there are $u^*_{\epsilon}\in W^{1,\Phi}(\Omega_0)$ and a subsequence of $(\tau_{k}u_{\epsilon})$, still denoted by itself, such that 
$$
\tau_ku_\epsilon\rightharpoonup u^*_\epsilon\text{ in }W^{1,\Phi}(\Omega_0)~\text{ as }~k\to+\infty,
$$
$$
\tau_ku_\epsilon\to u^*_\epsilon~\text{ in }~L^\Phi(\Omega_0)\text{ as }k\to+\infty
$$
and 
$$
\tau_ku_\epsilon(x,y)\to u^*_\epsilon(x,y)\text{ a.e in }\Omega_0\text{ as }k\to+\infty,
$$
where $u^*_\epsilon= \alpha$ or $u^*_\epsilon= \beta$ a.e. in $\Omega_0$. As in the proofs of Propositions \ref{R2} and \ref{R5}, we want to show that $u_\epsilon\in\Gamma_\Phi(\alpha,\beta)$. Toward that end, we show that $u^*_\epsilon= \beta$ a.e. in $\Omega_0$. The argument is similar to that developed in Proposition \ref{R5}, but we present the proof in detail for the reader’s convenience. Indeed, arguing by contradiction, assume that $u^*_\epsilon=\alpha$ a.e. in $\Omega_0$. Thus, given $\delta\in (0,\Phi(\beta-\alpha))$ there exist $i_*\in\mathbb{N}$, a sequence $(k_i)\subset\mathbb{N}$ and a subsequence $(u_{n_i})$ of $(u_n)$ such that $i_*<k_i$ for all $i\in\mathbb{N}$, $k_i\to+\infty$ and $n_i\to+\infty$ as $i\to+\infty$ and 
\begin{equation*}
	\iint_{\Omega_0}\Phi(|\tau_{j}u_{n_i}-\alpha|)dxdy<\delta\text{ and }\iint_{\Omega_0}\Phi(|\tau_{k_i}u_{n_i}-\alpha|)dxdy\geq\delta~\forall j\in[i_*,k_i-1]\cap\mathbb{N}.
\end{equation*} 
Consequently, considering the sequence $(\tau_{k_i}u_{n_i})$, for some subsequence, there exists $w_\epsilon\in W^{1,\Phi}_{\text{loc}}(\Omega)$ satisfying
\begin{equation*}
	\tau_{k_i}u_{n_i}\rightharpoonup w_\epsilon\text{ in }W^{1,\Phi}_{\text{loc}}(\Omega)~\text{ as }~i\to+\infty,
\end{equation*}
\begin{equation*}
	\tau_{k_i}u_{n_i}\to w_\epsilon~\text{ in }~L^\Phi_{\text{loc}}(\Omega)\text{ as }i\to+\infty
\end{equation*}
and
\begin{equation*}
	\alpha\leq w_\epsilon(x,y)\leq \beta\text{ a.e. in }\Omega.
\end{equation*} 
Setting the functional 
$$
a^i_{\epsilon,j}(v)=\iint_{\Omega_j}\left(\Phi(|\nabla v|)+A(\epsilon x+\epsilon k_i,y)V(v)\right)dxdy,~~v\in W^{1,\Phi}_{\text{loc}}(\Omega),~i\in\mathbb{N}\text{ and }j\in\mathbb{Z},
$$
it is easy to check that  
\begin{equation*}
	\sum_{j\in\mathbb{Z}}a_{\epsilon,j}^i(\tau_{k_i}u_{n_i})=\sum_{j\in\mathbb{Z}}a_{\epsilon,j}(u_{n_i})=I_\epsilon(u_{n_i}),~~\forall i\in\mathbb{N}.
\end{equation*}
This fact together with the limit below 
$$
\liminf_{i\to+\infty}A(\epsilon x+\epsilon k_i,y)=A_\infty
$$  
implies that
\begin{equation}\label{59}
	I_\infty(w_\epsilon)\leq c_{\epsilon,\Phi}(\alpha,\beta),	
\end{equation}
and so, $a_{\infty,j}(w_\epsilon)$ goes to $0$ as $j$ goes to $\pm\infty$. So, by passing to a subsequence if necessary, it is easy to see that
$$
\tau_kw_\epsilon\to \alpha\text{ or }\beta\text{ in }L^\Phi(\Omega_0)\text{ as }k\to\pm\infty.
$$
The same ideas explored in the proof of Claim 1 of Proposition \ref{R5} ensures that
$$
\tau_kw_\epsilon\to\alpha\text{ in }L^\Phi(\Omega_0)\text{ as }k\to-\infty.
$$
Next, we are going to prove that  
\begin{equation}\label{60}
	\tau_kw_\epsilon\to\beta\text{ in }L^\Phi(\Omega_0)\text{ as }k\to+\infty.
\end{equation}
Assume for contradiction that \eqref{60} is not true. Arguing as in the proof of Proposition \ref{R5}, it follows that there are $\tilde{\epsilon}_0>0$, $j\in\mathbb{N}$ and $i\in\mathbb{N}$ large enough such that for some fixed $\tilde{\epsilon}\in(0,\frac{\tilde{\epsilon}_0}{2})$ one has
\begin{equation}\label{61}
	\int_{k_i-1}^{k_i+1}\int_0^1\left(\Phi(|\nabla u_{n_i}|)+V(u_{n_i})\right)dxdy\geq\frac{\tilde{\epsilon}_0}{\tilde{A}},
\end{equation}
\begin{equation}\label{62}
	I_\epsilon(u_{n_i})\leq c_{\epsilon,\Phi}(\alpha,\beta)+\frac{\tilde{\epsilon}}{4}
\end{equation}
and 
\begin{equation}\label{63}
	a_{\infty,j}\left((\tau_{k_i}u_{n_i}-\alpha)(x-j)+\alpha\right)\leq \frac{\tilde{\epsilon}}{2}.
\end{equation}
Using the function $U_{j,i}\in\Gamma_\Phi(\alpha,\beta)$ given by
$$ 
U_{j,i}(x,y)=\left\{\begin{array}{lll}
	\alpha,&\mbox{if}\quad x\leq j&\mbox{and}\quad y\in (0,1),\\ 
	(\tau_{k_i}u_{n_i}(x,y)-\alpha)(x-j)+\alpha,&\mbox{if}\quad j\leq x\leq j+1&\mbox{and}\quad y\in (0,1),\\
	\tau_{k_i}u_{n_i}(x,y),&\mbox{if}\quad x>j+1&\mbox{and}\quad y\in (0,1),
\end{array}\right.
$$
we derive that
\begin{equation}\label{64}
	c_{\infty,\Phi}(\alpha,\beta)\leq I_\infty(U_{j,i})=a_{\infty,j}(U_{j,i})+\sum_{t=j+1+k_i}^{+\infty}a_{\infty,t}(u_{n_i}).
\end{equation}
Now, since the function $A$ belongs to Class 3, increasing $i$ if necessary, an easy computation shows that 
\begin{equation}\label{65}
	\sum_{t=j+1+k_i}^{+\infty}a_{\infty,t}(u_{n_i})\leq \sum_{t=j+1+k_i}^{+\infty}a_{\epsilon,t}(u_{n_i})+\frac{\tilde{\epsilon}}{4}. 
\end{equation}
Thus, from \eqref{61}-\eqref{65}, 
$$
c_{\infty,\Phi}(\alpha,\beta)\leq a_{\infty,j}(U_{j,i})+I_\epsilon(u_{n_i})-\epsilon_0+\frac{\tilde{\epsilon}}{4}\leq c_{\epsilon,\Phi}(\alpha,\beta)-\frac{\tilde{\epsilon}_0}{2},
$$
contrary to \eqref{66}. Therefore, $w_\epsilon\in\Gamma_\Phi(\alpha,\beta)$ and \eqref{59} leads to
$$
c_{\infty,\Phi}(\alpha,\beta)\leq I_\infty(w_\epsilon)\leq c_{\epsilon,\Phi}(\alpha,\beta),
$$
which again contradicts \eqref{66}. Consequently, we conclude from the study carried out here that $\tau_ku_{\epsilon}\to\beta$ in $L^\Phi(\Omega_0)$ as $k\to+\infty$. By a similar argument, we can conclude that $u_\epsilon\in\Gamma_\Phi(\alpha,\beta)$ for $\epsilon\in(0,\epsilon_0)$. Moreover, by \eqref{67}, we must have $I_\epsilon(w_\epsilon)= c_{\epsilon,\Phi}(\alpha,\beta)$, finishing the proof. 
\end{proof}

Finally, we can now prove our main result of this subsection.

\noindent{\textbf{Proof of Theorem \ref{T5}.}}\\
Initially, we will consider the following set
\begin{equation}\label{80}
	K_{\epsilon,\Phi}(\alpha,\beta)=\left\{u\!\in\!\Gamma_\Phi(\alpha,\beta)\!:\!I_\epsilon(u)\!=\!c_{\epsilon,\Phi}(\alpha,\beta),~u(x,0)=u(x,1)\text{ in }\mathbb{R},~\alpha\leq u\leq\beta\text{ a.e. on }\Omega \right\},
\end{equation}
which consists of points of minima of $I_\epsilon$ on $\Gamma_\Phi(\alpha,\beta)$ that are seen as functions defined on $\mathbb{R}^2$ being 1-periodic on the variable $y$. Next, from Proposition \ref{R7} we can proceed analogously to the proof of Lemma \ref{R3} for show that $K_{\epsilon,\Phi}(\alpha,\beta)$ is non empty whenever $\epsilon\in(0,\epsilon_0)$. Finally, we point out that the Theorem \ref{T5} follows following the same steps of Subsection \ref{Sb1} and the details are left to the reader. \hspace{12.2 cm} $\Box$

\subsection{The proof of Theorem \ref{T6}}\label{Sb5}
We exhibit in this subsection the proof of Theorem \ref{T6}. To build a framework for this theorem and avoid some bothersome technicalities, we will always assume here that the potential $V$ satisfies the conditions $(V_1)$-$(V_3)$, $(V_6)$ and $(V_8)$. Furthermore, we will consider assumptions $(\phi_1)$-$(\phi_2)$ on $\phi$, $\epsilon=1$ and that $A$ belongs to Class 4. Next we consider the following class of admissible functions
$$
\Gamma_{\Phi}^o(\beta)=\left\{ v\in\Gamma_{\Phi}(-\beta,\beta):v(x,y)=-v(-x,y)\text{ a.e. in }\Omega\text{ and }0\leq v(x,y)\leq \beta \text{ for a.e }x\geq 0 \right\}
$$
and the real number
$$
c_{\Phi}^o(\beta)=\inf_{v\in\Gamma_{\Phi}^o(\beta)}I(v),
$$
where $\Gamma_{\Phi}(-\beta,\beta)$ is given as in \eqref{Class}. Now it is important to point out that $\Gamma_{\Phi}^o(\beta)$ is not empty, because the function $\varphi_{-\beta,\beta}$ defined as in \eqref{F1} belongs to $\Gamma_{\Phi}^o(\beta)$ with $I(\varphi_{-\beta,\beta})<+\infty$. Having said that, we shall now explore the conditions $(V_6)$ and $(V_8)$ to show that the following class
\begin{equation}\label{79}
	K_\Phi^o(\beta)=\left\{v\in\Gamma_{\Phi}^o(\beta):I(v)=c_{\Phi}^o(\beta)\text{ and }v(x,0)=v(x,1)\text{ in }\mathbb{R}\right\}	
\end{equation}
is not empty. Hereafter, we will assume that the functions of $K_\Phi^o(\beta)$ are periodically extended in $\mathbb{R}^2$ on the variable $y$. Therefore, $K_\Phi^o(\beta)$ is constituted by (minimal) heteroclinic type solutions of $(PDE)$ with $\epsilon=1$ that are 1-periodic in $y$ and odd in $x$. 

\begin{lemma}\label{R8}
	It holds that $K_\Phi^o(\beta)\neq\emptyset$.
\end{lemma}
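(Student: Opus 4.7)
The plan is to mimic the direct-method strategy of Proposition~\ref{R2} and Lemma~\ref{R3}, but exploit the odd symmetry in $x$ (built into $\Gamma_{\Phi}^{o}(\beta)$) in order to bypass the periodicity of $A$ in $x$, which is not available under Class~4. Let $(u_n)\subset\Gamma_{\Phi}^{o}(\beta)$ be a minimizing sequence for $I$, so that $I(u_n)\to c_{\Phi}^{o}(\beta)<+\infty$. By definition of $\Gamma_{\Phi}^{o}(\beta)$ and the oddness in $x$, we already have $-\beta\leq u_n\leq\beta$ on $\Omega$, so no truncation is needed. From the uniform $L^\infty$ bound and the energy bound, $(u_n)$ is bounded in $W^{1,\Phi}(K)$ for every relatively compact $K\subset\Omega$, so Lemma~\ref{A4} together with a diagonal argument yields $u\in W^{1,\Phi}_{\text{loc}}(\Omega)$ with $u_n\rightharpoonup u$ in $W^{1,\Phi}_{\text{loc}}(\Omega)$, $u_n\to u$ in $L^{\Phi}_{\text{loc}}(\Omega)$, and $u_n\to u$ a.e. The a.e. convergence immediately preserves $u(x,y)=-u(-x,y)$ a.e. and $0\leq u(x,y)\leq\beta$ for a.e.\ $x\geq 0$, and weak lower semicontinuity combined with the Fatou-type arguments used in Proposition~\ref{R2} gives $I(u)\leq c_{\Phi}^{o}(\beta)$.

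The decisive step is to show that $u\in\Gamma_{\Phi}(-\beta,\beta)$, i.e.\ to control the asymptotic behavior of $\tau_k u$. Because $I(u)<+\infty$ we have $a_j(u)\to 0$ as $|j|\to+\infty$. Now, Class~4 supplies the coercivity at infinity: there is $K>0$ with $A_K:=\inf_{|x|\geq K,\,y\in[0,1]}A(x,y)>0$, hence
$$
\iint_{\Omega_j}V(u)\,dxdy\;\leq\;\frac{1}{A_K}\,a_j(u)\;\to\;0\quad\text{as }|j|\to+\infty,
$$
and similarly $\iint_{\Omega_j}\Phi(|\nabla u|)\,dxdy\to 0$. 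Fix any sequence $k_i\to+\infty$; the translates $\tau_{k_i}u$ are bounded in $W^{1,\Phi}(\Omega_0)$, so along a subsequence they converge weakly in $W^{1,\Phi}(\Omega_0)$, strongly in $L^{\Phi}(\Omega_0)$ and a.e.\ to some $w\in W^{1,\Phi}(\Omega_0)$. Weak lower semicontinuity of $\iint_{\Omega_0}\Phi(|\nabla\cdot|)dxdy$ together with continuity of $V$ on $[-\beta,\beta]$ (and $\Phi\in\Delta_2$) force $\iint_{\Omega_0}\Phi(|\nabla w|)dxdy=0$ and $\iint_{\Omega_0}V(w)dxdy=0$, so $w$ is constant and equals $-\beta$ or $\beta$; the bound $0\leq w\leq\beta$ (inherited from $x\geq 0$) excludes $w=-\beta$. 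A Urysohn-type argument shows the limit is the same along every subsequence, so $\tau_k u\to\beta$ in $L^{\Phi}(\Omega_0)$ as $k\to+\infty$. The oddness in $x$ then automatically gives $\tau_k u\to-\beta$ in $L^{\Phi}(\Omega_0)$ as $k\to-\infty$. Thus $u\in\Gamma_{\Phi}^{o}(\beta)$ and, combined with $I(u)\leq c_{\Phi}^{o}(\beta)$, we conclude $I(u)=c_{\Phi}^{o}(\beta)$. I expect this asymptotic step to be the main obstacle, since in the absence of $x$-periodicity we cannot perform the translation trick that drove the analysis of Class~1 and Class~2; the substitute is precisely the Class~4 coercivity at infinity.

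It remains to arrange the $y$-periodicity required by \eqref{79}. Here I would copy the reflection trick of Lemma~\ref{R3}. Writing $I_1(w)=\iint_{\mathbb{R}\times(0,1/2)}\mathcal{L}(w)\,dxdy$ and $I_2(w)=\iint_{\mathbb{R}\times(1/2,1)}\mathcal{L}(w)\,dxdy$, one of the two is $\leq \tfrac12 I(u)$; in the case $I_1(u)\leq I_2(u)$ define
$$
v(x,y)=\begin{cases} u(x,y), & 0\leq y\leq\tfrac{1}{2},\\ u(x,1-y), & \tfrac{1}{2}\leq y\leq 1,\end{cases}
$$
and use $(A_2)$--$(A_3)$ (so $A(x,1-y)=A(x,-y)=A(x,y)$) to get $I_2(v)=I_1(v)=I_1(u)$, whence $I(v)\leq I(u)=c_{\Phi}^{o}(\beta)$. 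The oddness $v(x,y)=-v(-x,y)$, the bound $0\leq v\leq\beta$ for $x\geq 0$, and the asymptotic conditions are clearly preserved by this reflection, so $v\in\Gamma_{\Phi}^{o}(\beta)$ with $I(v)=c_{\Phi}^{o}(\beta)$ and $v(x,0)=v(x,1)$. The opposite case is handled by reflecting through $y=1/2$ from the other side. This places $v$ in $K_{\Phi}^{o}(\beta)$, finishing the proof.
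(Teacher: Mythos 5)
Your proof is correct, and the overall scaffolding (minimizing sequence, local weak compactness, preservation of the odd symmetry and the $[0,\beta]$ bound, weak lower semicontinuity to get $I(u)\leq c_\Phi^o(\beta)$, and the $y$-reflection trick to enforce $u(x,0)=u(x,1)$) follows the same lines as the paper. However, the crucial step of showing $\tau_k u \to \beta$ in $L^\Phi(\Omega_0)$ as $k\to+\infty$ is handled by a genuinely different argument. The paper's proof of Lemma~\ref{R8} argues by contradiction: if $\tau_k u\not\to\beta$, then for some $\epsilon>0$ there are infinitely many disjoint slabs $\Omega_{k_i}$ with $\iint_{\Omega_0}\Phi(|\tau_{k_i}u-\beta|)dxdy\geq\epsilon$; the hypothesis $(V_8)$ is then used to convert $\Phi(|u-\beta|)$ into $V(u)$, which, via the Class~4 coercivity $a_0=\inf_{|x|\geq K}A>0$, forces $I(u)\geq a_0\tilde\mu\sum_{i\geq i_0}\epsilon=+\infty$, contradicting $I(u)\leq c_\Phi^o(\beta)$. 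Your argument instead goes through compactness: from $a_j(u)\to 0$ and the Class~4 lower bound you extract $\iint_{\Omega_j}\Phi(|\nabla u|)\to 0$ and $\iint_{\Omega_j}V(u)\to 0$, pass to a strong $L^\Phi(\Omega_0)$ limit $w$ of $\tau_{k_i}u$ by the Rellich-type embedding, identify $w=\beta$ using $\nabla w=0$, $V(w)=0$ and $w\in[0,\beta]$, and then invoke the every-subsequence principle. The upshot is that your route does not use $(V_8)$ at all in this lemma, which makes it slightly more general; the paper's route is shorter and avoids the compactness/diagonal bookkeeping, but the price is the explicit lower control of $V$ by $\Phi$ near $\beta$ encoded in $(V_8)$. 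Both are valid.
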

\begin{proof}
By some standard computations, one easily verifies that there exists a minimizing sequence $(u_n)\subset\Gamma_{\Phi}^o(\beta)$ for $I$ such that 
$$
0\leq u_n(x,y)\leq\beta,~\forall n\in\mathbb{N}\text{ and }x\geq 0.
$$	
Besides that, there exist $u\in W^{1,\Phi}_{\text{loc}}(\Omega)$ and a subsequence of $(u_n)$, still denoted by $(u_n)$, satisfying
\begin{equation}\label{68}
	u_n\rightharpoonup u~~\text{in}~~W_{\text{loc}}^{1,\Phi}(\Omega),
\end{equation}
\begin{equation}\label{69}
	u_n\to u~~\text{in}~~L^\Phi_{\text{loc}}(\Omega)
\end{equation}
and
\begin{equation}\label{70}
	u_n(x,y)\rightarrow u(x,y)~~\text{a.e. on}~~\Omega.
\end{equation}
We conclude from \eqref{70} that $u(x,y)=-u(-x,y)$ almost everywhere $(x,y)\in\Omega$ and $0\leq u(x,y)\leq \beta$ for almost every $x\geq0$, and finally by \eqref{68}-\eqref{69} it is easy to check that
\begin{equation}\label{71}
	I(u)\leq c_\Phi^o(\beta).
\end{equation}
Now we claim that $u\in\Gamma_{\Phi}^o(\beta)$. To establish our claim, we assume for the sake of contradiction that $\tau_{k}u$ not goes to $\beta$ as $k$ goes to $+\infty$ in $L^\Phi(\Omega_0)$. Thereby, since $\Phi\in\Delta_2$ there are $\epsilon>0$ and a subsequence $(k_{i})$ of natural numbers with $k_i\to+\infty$ such that
\begin{equation}\label{72}
	\iint_{\Omega_0}\Phi\left(|\tau_{k_i}u-\beta|\right)dxdy\geq\epsilon,~~~\forall i\in\mathbb{N}.
\end{equation} 
On the other hand, $(V_1)$-$(V_3)$ and $(V_8)$ yield 
$$
\tilde{\mu}\Phi(|t-\beta|)\leq V(t),~~\forall t\in[0,\beta],
$$
for some $\tilde{\mu}>0$.
Consequently, 
$$
I(u)\geq\sum_{i\in\mathbb{N}}\left(\iint_{\Omega_{k_i}}A(x,y)V(u)dxdy\right)\geq \tilde{\mu}\sum_{i\in\mathbb{N}}\left( \iint_{\Omega_{k_i}}A(x,y)\Phi\left(|u-\beta|\right)dxdy\right),
$$
that is, 
$$
I(u)\geq\tilde{\mu}\sum_{i\in\mathbb{N}}\left( \iint_{\Omega_0}A(x+k_i,y)\Phi\left(|\tau_{k_i}u-\beta|\right)dxdy\right).
$$
Now, fixing $i_0\in\mathbb{N}$ such that $|x+k_i|\geq K$ for any $x\in[0,1]$ and $i\geq i_0$,  the fact that $A$ belongs to Class 4 leads to
$$
I(u)\geq\tilde{\mu}a_0\sum_{i\geq i_0}\left( \iint_{\Omega_0}\Phi\left(|\tau_{k_i}u-\beta|\right)dxdy\right),
$$
where 
$$
a_0=\inf_{|x|\geq K,~y\in[0,1]}A(x,y)>0.
$$
Hence, by \eqref{72}, 
$$
I(u)\geq a_0\tilde{\mu}\sum_{i\geq i_0}\epsilon=+\infty,
$$
contrary to \eqref{71}. For this reason, 
$$
\tau_ku\to\beta\text{ in }L^\Phi(\Omega_0)\text{ as }k\to+\infty,
$$
and therefore, since $u$ is odd in the variable $x$ we conclude that $u\in\Gamma_\Phi^o(\beta)$. This fact combined with \eqref{71} produces that $I(u)=c_{\Phi}^o(\beta)$. Now assumptions $(A_2)$-$(A_3)$ allow us to proceed as in the proof of Lemma \ref{R3} to find a function $v\in\Gamma_\Phi^o(\beta)$ dependent on $u$ such that $v\in K_{\Phi}^o(\beta)$, and the proof is over. 
\end{proof}

We now finish this subsection by proving Theorem \ref{T6} as follows.

\noindent{\textbf{Proof of Theorem \ref{T6}.}}\\
Our proof follows the method developed in \cite{Alves1} and we will do it in detail for the reader’s convenience. To begin with, thanks to Lemma \ref{R8} we can take $u\in K_{\Phi}^o(\beta)$. Here we will first show that
\begin{equation}\label{76}
	\iint_{\Omega_0}\left(\phi\left(|\nabla u|\right)\nabla u\nabla \psi+A(x,y)V'(u)\psi\right)dxdy\geq 0,~\text{ for all }~\psi\in C^{\infty}_0(\mathbb{R}^2),
\end{equation}
which will guarantee that  
\begin{equation*}
	\iint_{\Omega_0}\left(\phi\left(|\nabla u|\right)\nabla u\nabla \psi+A(x,y)V'(u)\psi\right)dxdy= 0,~\text{ for all }~\psi\in C^{\infty}_0(\mathbb{R}^2),
\end{equation*}
implying that $u$ is a weak solution of $(PDE)$.

In what follows, for each $\psi\in C^{\infty}_0(\mathbb{R}^2)$, we will use the fact that $\psi(x,y)=\psi_o(x,y)+\psi_e(x,y)$, where 
$$
\psi_e(x,y)=\frac{\psi(x,y)+\psi(-x,y)}{2}~~~\text{and}~~~\psi_o(x,y)=\frac{\psi(x,y)-\psi(-x,y)}{2}.
$$ 
In addition to these functions, let us consider for $t>0$ the function 
$$
\tilde{\varphi}_t(x,y)=\max\left\{-\beta,\min\{\beta,\varphi_t(x,y)\}\right\}, \quad (x,y) \in \Omega,
$$ 
where 
$$
\varphi_t(x,y)=\left\{\begin{array}{lll}
	u(x,y)+t\psi_o(x,y),&\mbox{if}\quad x\geq 0\text{ and }u(x,y)+t\psi_o(x,y)\geq0,\\
	-u(x,y)-t\psi_o(x,y), &\mbox{if}\quad x\geq 0\text{ and }u(x,y)+t\psi_o(x,y)\leq0, \\-\varphi_t(-x,y),&\mbox{if}\quad x<0.
\end{array}\right.
$$ 
Now, a direct computation shows that $\tilde{\varphi}_t\in \Gamma_{\Phi}^o(\beta)$. Then $(V_6)$ together with $\tilde{\varphi}_t$ yields that
\begin{equation}\label{73}
	I(u+t\psi_o)=I(\varphi_t)\geq I(\tilde{\varphi}_t)\geq c_\Phi^o(\beta)=I(u).		
\end{equation}
On the other hand, by Lemma \ref{A5}-(b),    
\begin{equation}\label{74}
	\begin{split}	
		I(u+t\psi)-I(u+t\psi_o)\geq t\sum_{j\in\mathbb{Z}}\iint_{\Omega_j}&\phi(|\nabla(u+t\psi_o)|)\nabla u\nabla \psi_edxdy\\&+t^2\sum_{j\in\mathbb{Z}}\iint_{\Omega_j}\phi(|\nabla(u+t\psi_o)|)\nabla \psi_o\nabla \psi_edxdy\\&+\sum_{j\in\mathbb{Z}}\iint_{\Omega_j}A(x,y)\left(V(u+t\psi)-V(u+t\psi_o)\right)dxdy.
	\end{split}	
\end{equation}	
Since the functions $\phi(|\nabla (u+t\psi_o)|)\nabla u\nabla \psi_e$ and $\phi(|\nabla (u+t\psi_o)|)\nabla \psi_o\nabla \psi_e$ are odd in the variable $x$, then it is easily seen that   
\begin{equation}\label{75}
	\sum_{j\in\mathbb{Z}}\iint_{\Omega_j}\phi(|\nabla(u+t\psi_o)|)\nabla u\nabla \psi_edxdy=\sum_{j\in\mathbb{Z}}\iint_{\Omega_j}\phi(|\nabla(u+t\psi_o)|)\nabla \psi_o\nabla \psi_edxdy=0,
\end{equation} 
and so, from \eqref{73}-\eqref{75}, 
\begin{equation*}
	I(u+t\psi)-I(u)\geq\sum_{j\in\mathbb{Z}}\iint_{\Omega_j}A(x,y)\left(V(u+t\psi)-V(u+t\psi_o)\right)dxdy.
\end{equation*}
Consequently, as $A(x,y)V'(u)\psi_e$ is odd in the variable $x$, one gets  
\begin{equation}\label{151}
	\begin{split}
		\iint_{\Omega}(\phi(|\nabla u|)&\nabla u\nabla\psi+A(x,y)V'(u)\psi)dxdy=\lim_{t\rightarrow 0^+}\frac{I(u+t\psi)-I(u)}{t}\\&\geq\lim_{t\rightarrow 0^+}\sum_{j\in\mathbb{Z}}\iint_{\Omega_j}A(x,y)\frac{V(u+t\psi)-V(u+t\psi_o)}{t}dxdy\\&\geq \sum_{j\in\mathbb{Z}}\iint_{\Omega_j}A(x,y)V'(u)(\psi-\psi_o)dxdy=\sum_{j\in\mathbb{Z}}\iint_{\Omega_j}A(x,y)V'(u)\psi_edxdy=0, 
	\end{split}	
\end{equation} 
showing that the inequality \eqref{76} occurs for every $\psi\in C^{\infty}_0(\mathbb{R}^2)$. Finally, slightly varying the same ideas discussed in the Subsection \ref{Sb1}, we can conclude the proof of Theorem \ref{T6}.   \hspace{0.8 cm} $\Box$


\section{Heteroclinic solution of the prescribed curvature equation}\label{S3}

Throughout this section, we adapt for our problem the approach explored in \cite{Alves2} to find solutions that are periodic in the variable $y$ and heteroclinic in $x$ from $\alpha$ to $\beta$ to the prescribed mean curvature equation $(E)$. In this section, we will return to indexing $\alpha$ and $\beta$ in $V$, denoted as $V_{\alpha,\beta}$, because here our focus is on choosing suitable values of $\alpha$ and $\beta$. Since the ideas are so close to those of \cite{Alves2}, the presentation will be brief. In the following, we consider for each $L>0$ the quasilinear equation  
$$
-\Delta_{\Phi_L}u+A(\epsilon x,y)V'_{\alpha,\beta}(u)=0~~\text{in}~~\mathbb{R}^2,\eqno{(E)_L}
$$
with $V_{\alpha,\beta} \in \mathcal{V}$, where $\Phi_L:\mathbb{R}\rightarrow[0,+\infty)$ is an $N$-function of the form
$$
\Phi_L(t)=\int_{0}^{|t|}\phi_L(s)sds,
$$
where $\phi_L(t)=\varphi_L(t^2)$ and  $\varphi_L$ is defined by 
$$
\varphi_L(t)=\left\{\begin{array}{lll}
	\dfrac{1}{\sqrt{1+t}},&\mbox{if}\quad t\in[0,L],
	\\
	x_L(t-L-1)^2+y_L,&\mbox{if}\quad t\in[L,L+1],
	\\
	y_L,&\mbox{if}\quad t\in[L+1,+\infty),
\end{array}\right.
$$
with
$$
x_L=\frac{\sqrt{1+L}}{4(1+L)^2}~\text{ and }~y_L=(4L+3)x_L.
$$

We point out that the main purpose of this section is to use the arguments of Sect. \ref{S2} to investigate the existence of a heteroclinic solution $u_{\alpha,\beta}$ from $\alpha$ to $\beta$ for $(E)_L$ that satisfies 
\begin{equation} \label{EQTL} 
\|\nabla u_{\alpha,\beta}\|_{L^{\infty}(\mathbb{R}^2)}\leq \sqrt{L},
\end{equation} 
because this inequality implies that $u_{\alpha,\beta}$ is a heteroclinic solution from $\alpha$ to $\beta$ for $(E)$. Here, we will prove that the inequality \eqref{EQTL} holds when $\max\{|\alpha|,|\beta|\}$ is small enough. In order to do that, a control involving the roots $\alpha$ and $\beta$ of $V_{\alpha,\beta}$ is necessary, and at this point the condition $(V_4)$ applies an important rule in our argument.

The next result is about functions $\phi_L$ and $\Phi_L$, which makes it clear that $\Phi_L$ is an $N$-function.

\begin{lemma}\label{R9}
	For each $L>0$, the functions $\phi_L$ and $\Phi_L$ have the following properties:
	\begin{itemize}
		\item [(a)] $\phi_L$ is $C^1$.
		\item [(b)] $y_L\leq \phi_L(t)\leq 1$ for all $t\geq 0$.
		\item [(c)] $\frac{y_L}{2}t^2\leq \Phi_L(t)\leq \frac{1}{2}t^2$ for any $t\in\mathbb{R}$.
		\item [(d)] $\Phi_L$ is a convex function.
		\item [(e)] $\left(\phi_L(t)t\right)'>0$ for all $t>0$.
	\end{itemize}
\end{lemma}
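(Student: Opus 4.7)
The proof will be by direct calculation, item by item, using the explicit piecewise definition of $\varphi_L$.

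For (a), I would verify $\varphi_L \in C^1([0,+\infty))$ by matching values and derivatives at the junctions $t=L$ and $t=L+1$. At $t=L$, both branches give $\frac{1}{\sqrt{1+L}}$, using $x_L + y_L = (4L+4)x_L = \frac{1}{\sqrt{1+L}}$; the derivatives match as $-\frac{1}{2(1+L)^{3/2}}$, since $-2 x_L = -\frac{1}{2(1+L)^{3/2}}$. At $t = L+1$, values match at $y_L$ and derivatives both vanish. Since $\phi_L(t) = \varphi_L(t^2)$ is a composition of a $C^1$ map with the $C^1$ squaring map, (a) follows.

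For (b), I would show $\varphi_L$ is non-increasing: strictly decreasing on $[0,L]$ from $1$ to $\frac{1}{\sqrt{1+L}}$, strictly decreasing on $[L,L+1]$ from $\frac{1}{\sqrt{1+L}}$ to $y_L$, and constant $y_L$ afterwards. Then $y_L \le \varphi_L(s) \le 1$, and substituting $s = t^2$ gives (b). Item (c) is then immediate from (b) by integrating the two-sided bound $y_L s \le \phi_L(s)s \le s$ on $[0,|t|]$.

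For (e), which is the crux, I would split into three cases according to where $t^2$ lies. On $t^2 \in [0,L]$, $\phi_L(t)t = t/\sqrt{1+t^2}$, so $(\phi_L(t)t)' = (1+t^2)^{-3/2} > 0$. On $t^2 \ge L+1$, $\phi_L(t)t = y_L t$, so $(\phi_L(t)t)' = y_L > 0$. The delicate case is $t^2 \in [L,L+1]$: setting $v = t^2 - L - 1 \in [-1,0]$, a direct computation gives
\[
(\phi_L(t)t)' = 5 x_L v^2 + 4 x_L(L+1)\, v + y_L,
\]
which is the main obstacle. I would analyze this quadratic in $v$: it opens upward, its unconstrained minimum is at $v^* = -\tfrac{2(L+1)}{5}$, and on the interval $[-1,0]$ its minimum value is either $g(v^*) = \tfrac{x_L}{5}(-4L^2+12L+11)$ (when $L \le 3/2$, so $v^* \in [-1,0]$) or $g(-1) = 4 x_L$ (when $L > 3/2$), both of which are strictly positive. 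This yields (e) on the whole interval.

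Finally, (d) follows from (e): $\Phi_L$ is even with $\Phi_L'(t) = \phi_L(t)t$ for $t>0$, which is strictly increasing by (e) and vanishes at $0$; the odd extension of an increasing function through the origin is increasing on all of $\mathbb{R}$, so $\Phi_L'$ is monotone non-decreasing and $\Phi_L$ is convex. The main delicate computation throughout is the quadratic-in-$v$ argument for (e) on the transition interval $[L, L+1]$.
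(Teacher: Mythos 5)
Your proof is correct, and it is in fact more informative than the paper's own treatment: the paper simply defers to the proof of Lemma~3.1 in the cited reference, which is a direct verification of the same sort you carry out here. Your computations check out — the $C^1$ matching of values $(4L+4)x_L=\tfrac{1}{\sqrt{1+L}}$ and slopes $-2x_L=-\tfrac{1}{2(1+L)^{3/2}}$ at $t^2=L$, the monotonicity of $\varphi_L$ for (b) and (c), the reduction of (e) on the transition interval to the quadratic $g(v)=5x_Lv^2+4x_L(L+1)v+y_L$ with the case split at $L=3/2$ giving $g(v^*)=\tfrac{x_L}{5}(-4L^2+12L+11)>0$ (positive since the larger root is $(3+\sqrt{20})/2>3/2$) or $g(-1)=4x_L>0$, and the deduction of convexity from (e) via the odd extension of the strictly increasing, vanishing-at-zero map $t\mapsto\phi_L(t)t$ — are all sound.
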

\begin{proof}
	The argument follow the same lines as the proof of \cite[Lemma 3.1]{Alves2}.
\end{proof}

We would like to point out that our focus now is on examining if the $N$-function $\Phi_L$ is in the settings of Sect. \ref{S2}, that is, if $\phi_L$ satisfies conditions $(\phi_1)$-$(\phi_3)$. Indeed, it is clear that by Lemma \ref{R9}-(e) $\phi_L$ checks $(\phi_1)$, and by Lemma \ref{R9}-(b), it checks $(\phi_3)$ with $q=2$. Moreover, with direct computations one can get that there are real numbers $m_L, l_L>1$ such that $l_L\leq m_L$ and
$$
l_L-1\leq \frac{(\phi_L(t)t)'}{\phi_L(t)}\leq m_L-1~\text{ for any }~t\geq 0,
$$
from which it follows that $\phi_L$ verifies $(\phi_2)$.
As a direct consequence the N-functions $\Phi_L$ and $\tilde{\Phi}_L$ satisfy $\Delta_2$-condition, where $\tilde{\Phi}_L$ is the complementary function associated with $\Phi_L$, which ensures that the space $L^{\Phi_L}$ is reflexive (see for instance Appendix \ref{A}). Actually, the study made in \cite[Lemma 3.2]{Alves2} shows that the space $L^{\Phi_L}$ is exactly $L^2$ space and the norm of $L^{\Phi_L}$ is equivalent to the norm of $L^2$.

Now, assuming for a moment that function $A$ belongs to Class 1 or 2, $\epsilon=1$, and that the potential $V_{\alpha,\beta}\in \mathcal{V}$, the same arguments from Subsections \ref{Sb1} and \ref{Sb2} guarantee that there exist a periodic function $u_{\alpha,\beta}:\mathbb{R}^2\to\mathbb{R}$ on the variable $y$ such that $u_{\alpha,\beta}\in K_{\Phi_L}(\alpha,\beta)$, where
$$
K_{\Phi_L}(\alpha,\beta)=\left\{w\!\in\!\Gamma_{\Phi_L}(\alpha,\beta)\!:\!I(w)\!=\!c_{\Phi_L}\!(\alpha,\beta),~w(x,0)=w(x,1)\text{ in }\mathbb{R},~\alpha\leq w\leq\beta\text{ a.e. on }\Omega\right\}.
$$
Moreover, $u_{\alpha,\beta}$ is a weak solution of equation $(E)_L$ with $\epsilon=1$ in $C^{1,\gamma}_{\text{loc}}(\mathbb{R}^2)$ for some $\gamma\in(0,1)$ that is heteroclinic in $x$ from $\alpha$ to $\beta$. The next lemma is crucial to guarantee the existence of a heteroclinic solution for $(E)$ and its proof is inspired by the proof of \cite[Lemma 3.3]{Alves2}.

\begin{lemma}\label{R10}
There exists $\delta>0$ such that for each $V_{\alpha,\beta} \in \mathcal{V}$ with $\max\left\{|\alpha|,|\beta|\right\}<\delta$, the heteroclinic solution $u_{\alpha,\beta}\in K_{\Phi_L}(\alpha,\beta)$ obtained in the previous section satisfies the estimate below  
		$$
	\|u_{\alpha,\beta}\|_{C^1(B_{1}(z))}<\sqrt{L}, \quad \forall z\in\mathbb{R}^2,
	$$
	where $B_1(z)$ denotes the ball in $\mathbb{R}^2$ of center $z$ and radius 1.
\end{lemma}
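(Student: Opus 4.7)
The plan is to proceed by contradiction and compactness. Suppose the conclusion fails; then one can find sequences $(\alpha_n,\beta_n)$ with $\max\{|\alpha_n|,|\beta_n|\}\to 0$, potentials $V_{\alpha_n,\beta_n}\in\mathcal{V}$, corresponding minimizers $u_n:=u_{\alpha_n,\beta_n}\in K_{\Phi_L}(\alpha_n,\beta_n)$, and centers $z_n=(x_n,y_n)\in\mathbb{R}^2$ such that $\|u_n\|_{C^1(B_1(z_n))}\geq\sqrt{L}$. The idea is to translate each $u_n$ so that the bad ball is moved back to a fixed compact region, extract a $C^1_{\text{loc}}$ limit, and then derive a contradiction from the fact that the pointwise bound $\alpha_n\leq u_n\leq\beta_n$ forces the limit to vanish identically.

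First I would normalize the centers. Since every $u_n$ is $1$-periodic in $y$, a shift of $y_n$ by an integer lets me assume $y_n\in[0,1]$. For the $x$-variable I use the geometry of $A$: in Class~$1$, $A$ is $1$-periodic in $x$, so Lemma~\ref{R1} (applied to the $N$-function $\Phi_L$) allows me to shift by integers in $x$, and I may assume $x_n\in[0,1]$ with the equation unchanged; in Class~$2$, the same argument works if $x_n$ stays bounded, while if $|x_n|\to\infty$ the translated coefficient $A(\cdot+k_n,\cdot)$ converges locally uniformly to the $1$-periodic limit $A_p$, so $\tilde u_n(x,y):=u_n(x+k_n,y)$ solves a perturbation of $(E)_L$ whose coefficient is bounded above and below by positive constants uniformly in $n$.

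Next I would invoke uniform regularity. Fix any $\lambda>0$ and take $n$ so large that $\alpha_n,\beta_n\in(-\lambda,\lambda)$. The inclusion $\alpha_n\leq u_n\leq\beta_n$ gives $\|\tilde u_n\|_{L^\infty(\mathbb{R}^2)}\leq\max\{|\alpha_n|,|\beta_n|\}\to 0$, while $(V_4)$ yields $\|V'_{\alpha_n,\beta_n}(\tilde u_n)\|_{L^\infty}\leq M(\lambda)$, so the right-hand side of $(E)_L$ for $\tilde u_n$ is uniformly bounded. As $\Phi_L$ satisfies $(\phi_1)$--$(\phi_3)$ with structural constants independent of $n$, Lieberman's regularity theorem \cite[Theorem~1.7]{Lieberman} furnishes $\gamma\in(0,1)$ and $C>0$ (both independent of $n$) with $\|\tilde u_n\|_{C^{1,\gamma}(K)}\leq C$ on every compact $K\subset\mathbb{R}^2$. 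Arzel\`a--Ascoli with a diagonal argument then provides a subsequence converging in $C^1_{\text{loc}}(\mathbb{R}^2)$ to some $u^{*}$. Pointwise convergence and $\|\tilde u_n\|_{L^\infty}\to 0$ imply $u^{*}\equiv 0$; on the other hand, passing to the limit in $\|\tilde u_n\|_{C^1(B_1(\tilde z_n))}\geq\sqrt{L}$ (with $\tilde z_n\to \tilde z^{*}\in[0,1]^2$ up to a subsequence) yields $\|u^{*}\|_{C^1(B_1(\tilde z^{*}))}\geq\sqrt{L}>0$, which is absurd.

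The main obstacle is the $x$-translation in the asymptotic case (Class~$2$): the equation is not translation invariant, so one must verify that the limiting coefficient, whether it is $A$ itself (for bounded $x_n$) or $A_p$ (when $|x_n|\to\infty$), still stays bounded below by $A_0$, and that the constants in Lieberman's theorem depend only on the $L^\infty$ bounds of the solution and of the right-hand side and on the structure of $\Phi_L$, not on quantities that could degenerate as $(\alpha_n,\beta_n)\to(0,0)$. Once this uniformity is in place, the compactness scheme closes the argument.
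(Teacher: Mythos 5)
Your contradiction--compactness scheme is sound and captures the core of the paper's argument: translate so the bad ball sits near the origin, use $(V_4)$ and $\alpha_n\leq u_n\leq\beta_n$ to get a uniform $L^\infty$ bound on the source term, invoke Lieberman for uniform $C^{1,\gamma}$ estimates, extract a $C^1_{\mathrm{loc}}$ limit which must vanish because $\|u_n\|_{L^\infty}\to 0$, and contradict the lower bound $\sqrt{L}$ on the $C^1$ norm. Where you diverge from the paper is in the normalization step. You work hard to keep the translated equation in the structured form $(PDE)$ with a coefficient that is either $A$ itself (Class 1, or Class 2 with bounded shifts) or the periodic limit $A_p$ (Class 2 with unbounded shifts), and you correctly flag this as ``the main obstacle.'' The paper sidesteps the issue entirely: it translates by the exact (possibly non-integer, possibly divergent) vector $(r_n,s_n)$ so that the bad ball is centered at the origin, and simply regards $\tilde u_n$ as a weak solution of
$$-\Delta_{\Phi_L}\tilde u_n+B_n=0,\qquad B_n(x,y)=A(x+r_n,y+s_n)\,V'_{\alpha_n,\beta_n}(\tilde u_n(x,y)),$$
without asking $B_n$ to be of the form $A(\cdot)V'(\cdot)$ for some admissible $A$. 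Only the uniform bound $|B_n|\leq M\,\|A\|_{L^\infty(\mathbb{R}^2)}$ enters Lieberman's theorem; neither periodicity of the coefficient, nor its convergence, nor a lower bound $A\geq A_0$ is needed for the regularity estimate. This dispenses with the Class 1 versus Class 2 case split and, in the Class 2 case, with the bounded-versus-unbounded-$x_n$ subcases. Your version works, and in fact your closing paragraph identifies precisely the right sufficient condition (constants depending only on $L^\infty$ bounds of the solution and the source), but committing to that observation from the outset, as the paper does, makes the normalization step trivial.
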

\begin{proof}
If the lemma does not hold, then for each $n \in \mathbb{N}$  there is a potential $V_{\alpha_n,\beta_n}\in \mathcal{V}$ with $
	\max\left\{|\alpha_n|,|\beta_n|\right\}<n^{-1}$ and $z_n=(r_n,s_n)\subset\mathbb{R}^2$ such that  the heteroclinic solution $u_{\alpha_n,\beta_n}\in K_{\Phi_L}(\alpha_n,\beta_n)$ satisfies 
\begin{equation}\label{**}
	\|u_{\alpha_n,\beta_n}\|_{C^1(B_1(r_n,s_n))}\geq \sqrt{L},~~~\forall n\in\mathbb{N}.
\end{equation} 
Now, we note that for each $n\in\mathbb{N}$ the function $\tilde{u}_n$ defined by 
$$
\tilde{u}_n(x,y)=u_{\alpha_n,\beta_n}(x+r_n,y+s_n)~\text{ for }~(x,y)\in\mathbb{R}^2
$$
is a weak solution of the quasilinear equation
$$
-\Delta_{\Phi_L}u+B_n(x,y)=0\text{ in }\mathbb{R}^2,
$$
where 
$$
B_n(x,y)=A(x+r_n,y+s_n)V'_{\alpha_n,\beta_n}(u_{\alpha_n,\beta_n}(x,y)).
$$
Furthermore, the condition $(V_4)$ is crucial to show the following inequality 
\begin{equation}\label{NI}
	|B_n(x,y)|\leq M\|A\|_{L^{\infty}(\mathbb{R}^2)}~~\forall(x,y)\in\mathbb{R}^2\text{ and }\forall n\geq n_0,
\end{equation}
where $n_0\in\mathbb{N}$ and $M$ is a positive constant independent of $n$. Indeed, let us first observe that the condition $\max\left\{|\alpha_n|,|\beta_n|\right\}\to 0$ as $n\to+\infty$ implies that there exists $n_0\in\mathbb{N}$ such that $\alpha_n , \beta_n \in (-1,1)$ for all $n\geq n_0$. Since 
$$
\alpha_n\leq u_{\alpha_n,\beta_n}(x,y)\leq\beta_n,~~ \forall (x,y)\in\mathbb{R}^2~~\text{and}~~\forall n\in\mathbb{N},
$$ 
by condition $(V_4)$, it follows that there exists $M=M(1)>0$ such that  
$$
\left|V_{\alpha_n,\beta_n}'\left(u_{\alpha_n,\beta_n}(x,y)\right)\right|\leq M,~~\forall (x,y)\in\mathbb{R}^2~~\text{and}~~ \forall n\geq n_0,
$$
which is sufficient to guarantee estimate \eqref{NI}. Therefore, the elliptic regularity theory found in \cite[Theorem 1.7]{Lieberman} implies that $\tilde{u}_n\in C^{1,\gamma_0}_{\text{loc}}(\mathbb{R}^2)$, for some $\gamma_0\in(0,1)$, and that there is a positive constant $R$ independent of $n$ verifying
$$
\|\tilde{u}_n\|_{C^{1,\gamma_0}_{\text{loc}}(\mathbb{R}^2)}\leq R~~\forall~n\in\mathbb{N}.
$$
The above estimate allows us to use Arzelà–Ascoli Theorem to find $u\in C^1(B_1(0))$ and a subsequence of $(\tilde{u}_n)$, still denoted by $(\tilde{u}_n)$, such that
$$
\tilde{u}_n\to u~~\text{ in }~~C^1(B_1(0)).
$$
Now since $\|u_{\alpha_n,\beta_n}\|_{L^{\infty}(\mathbb{R}^2)}$ tends to zero as $n\to +\infty$, we obtain that $u=0$ on $B_1(0)$, and so, 
$$
\|\tilde{u}_n\|_{C^1\left(B_1(0)\right)}<\sqrt{L},~~\forall~n\geq n_0,
$$
for some $n_0\in\mathbb{N}$. Therefore, 
$$
\|u_{\alpha_n,\beta_n}\|_{C^1(B_1(r_n,s_n))}<\sqrt{L},~~\forall~n\geq n_0,
$$ 
which contradicts \eqref{**}, and the proof is completed.
\end{proof}

We are finally ready to prove the Theorem \ref{T1}.

\noindent{\textbf{Proof of Theorem \ref{T1}.}}\\ 
To begin with, we claim that given $L>0$ there exists $\delta>0$ such that if $\max\{|\alpha|,|\beta|\}\in(0,\delta)$ we have
\begin{equation}\label{78}
	\|\nabla u_{\alpha,\beta}\|_{L^{\infty}(\mathbb{R}^2)}\leq \sqrt{L}~\text{ for all }~u_{\alpha,\beta}\in K_{\Phi_L}(\alpha,\beta).
\end{equation}
Indeed, for each $(x,y)\in\mathbb{R}^2$ we can choose $z\in\mathbb{R}^2$ verifying $(x,y)\in B_1(z)$. Thanks to Lemma \ref{R10}, there is $\delta>0$ such that for each  pair $(\alpha,\beta)$ of real numbers with $\alpha<\beta$ and $\max\{|\alpha|,|\beta|\}\in(0,\delta)$ one has
$$
\|u_{\alpha,\beta}\|_{C^1(B_{1}(z))}<\sqrt{L}
$$
whenever $u_{\alpha,\beta}\in K_{\Phi_L}(\alpha,\beta)$. Now, from the arbitrariness of $(x,y)$ it is easy to see that our claim is established. Therefore, the estimate \eqref{78} ensures  $u_{\alpha,\beta}$ is a heteroclinic solution of $(E)$. To complete the proof, the fact that $V_{\alpha,\beta}\in C^2(\mathbb{R},\mathbb{R})$ combined with the assumptions $(V_2)$-$(V_3)$ yields that there are $\lambda,d_1,d_2>0$ such that 
$$
|V'_{\alpha,\beta}(t)|\leq d_1|t-\alpha|,~~\forall t\in[\alpha-\lambda,\alpha+\lambda]
$$
and 
$$
|V'_{\alpha,\beta}(t)|\leq d_2|t-\beta|,~~\forall t\in [\beta-\lambda,\beta+\lambda].
$$
Thus, by Lemma \ref{R9}-(b),
$$
|V'_{\alpha,\beta}(t)|\leq \frac{d_1}{y_L}\phi_L(|t-\alpha|)|t-\alpha|,~~\forall t\in[\alpha-\lambda,\alpha+\lambda]
$$
and 
$$
|V'_{\alpha,\beta}(t)|\leq \frac{d_2}{y_L}\phi_L(|t-\beta|)|t-\beta|,~~\forall t\in [\beta-\lambda,\beta+\lambda].
$$
Consequently, $V_{\alpha,\beta}$ satisfies $(V_7)$ with $\phi_L$, and so,  proceeding as in the proof of Lemma \ref{R4a}, we see that $u_{\alpha,\beta}$ verifies
$$
\alpha<u_{\alpha,\beta}(x,y)<\beta\text{ for all }(x,y)\in\mathbb{R}^2,
$$
which is the desired conclusion.\hspace{10 cm} $\square$

Now, let us assume that $A$ belongs to Class 3 and that $V_{\alpha,\beta}\in\mathcal{V}$. Considering the set $K_{\epsilon,\Phi_L}(\alpha,\beta)$ as in \eqref{80}, then we can argue similarly to the proof of Theorem \ref{T5} to obtain that $K_{\epsilon,\Phi_L}(\alpha,\beta)\neq\emptyset$ whenever $\epsilon>0$ is small. With everything, proceeding analogously as in the proof of Lemma \ref{R10} we get the following result.

\begin{lemma}\label{R11}
There exist $\delta>0$ independent of $\epsilon>0$ and $\epsilon_0>0$ such that for each $\epsilon\in(0,\epsilon_0)$ and $V_{\alpha,\beta} \in \mathcal{V}$ with $\max\{|\alpha|,|\beta|\}\in(0,\delta)$, the heteroclinic solution $u_{\epsilon,\alpha,\beta}\in K_{\epsilon,\Phi_L}(\alpha,\beta)$ satisfies the following estimate  
	$$
	\|u_{\epsilon,\alpha,\beta}\|_{C^1(B_{1}(z))}<\sqrt{L}, \quad \forall z\in\mathbb{R}^2.
	$$
\end{lemma}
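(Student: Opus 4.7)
The plan is to argue by contradiction, following the same scheme as in Lemma \ref{R10}, but now carrying the parameter $\epsilon$ along. Suppose the conclusion fails. Then for each $n\in\mathbb{N}$ I can extract some $\epsilon_n\in(0,1/n)$, a potential $V_{\alpha_n,\beta_n}\in\mathcal{V}$ with $\max\{|\alpha_n|,|\beta_n|\}<1/n$, a point $z_n=(r_n,s_n)\in\mathbb{R}^2$, and a heteroclinic minimizer $u_n:=u_{\epsilon_n,\alpha_n,\beta_n}\in K_{\epsilon_n,\Phi_L}(\alpha_n,\beta_n)$ (whose existence is guaranteed by Proposition \ref{R7} with $\Phi_L$ in place of $\Phi$, provided $\epsilon_0>0$ is chosen small enough) such that $\|u_n\|_{C^1(B_1(z_n))}\geq\sqrt{L}$. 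The strategy is then to rescale, obtain a uniform $C^{1,\gamma_0}$ estimate, and pass to the limit to get $\tilde u_n\to 0$ in $C^1(B_1(0))$, which will contradict the above inequality.

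Next, I would translate and set $\tilde{u}_n(x,y)=u_n(x+r_n,y+s_n)$. Each $\tilde u_n$ is a weak solution of
\[
-\Delta_{\Phi_L}\tilde{u}_n+B_n=0 \quad \text{in } \mathbb{R}^2,\qquad B_n(x,y)=A\bigl(\epsilon_n(x+r_n),y+s_n\bigr)\,V'_{\alpha_n,\beta_n}\bigl(\tilde{u}_n(x,y)\bigr).
\]
Since $A$ belongs to Class 3, the condition $\liminf_{|(x,y)|\to+\infty}A(x,y)=A_\infty<+\infty$ together with continuity of $A$ yields $\|A\|_{L^\infty(\mathbb{R}^2)}<+\infty$. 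Because $\alpha_n,\beta_n\in(-1,1)$ for $n$ large and $\alpha_n\leq \tilde u_n\leq\beta_n$, condition $(V_4)$ applied with $\lambda=1$ furnishes a constant $M=M(1)>0$, independent of $n$, such that $\|V'_{\alpha_n,\beta_n}(\tilde u_n)\|_{L^\infty(\mathbb{R}^2)}\leq M$. Hence
\[
\|B_n\|_{L^\infty(\mathbb{R}^2)}\leq M\,\|A\|_{L^\infty(\mathbb{R}^2)} \qquad \text{for all large } n,
\]
a bound which is uniform in both $n$ and $\epsilon_n$. Lieberman's regularity theorem \cite[Theorem 1.7]{Lieberman}, whose constants depend only on the structural parameters of $\Phi_L$ (hence only on $L$) and on $\|B_n\|_{L^\infty}$, then provides $\gamma_0\in(0,1)$ and $R>0$, independent of $n$, such that $\|\tilde u_n\|_{C^{1,\gamma_0}(B_2(0))}\leq R$. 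Arzelà--Ascoli yields a subsequence converging in $C^1(B_1(0))$ to some $u\in C^1(B_1(0))$. Since $\|u_n\|_{L^\infty(\mathbb{R}^2)}\leq\max\{|\alpha_n|,|\beta_n|\}\to 0$, the limit must be $u\equiv 0$, so $\|\tilde u_n\|_{C^1(B_1(0))}<\sqrt L$ for all $n$ large, contradicting $\|u_n\|_{C^1(B_1(r_n,s_n))}\geq\sqrt L$.

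The main obstacle, compared with Lemma \ref{R10}, is ensuring that the constant $\delta$ produced by the contradiction scheme is truly independent of $\epsilon$. The delicate point is that the translation by $(r_n,s_n)$ interacts with the coefficient $A(\epsilon_n\,\cdot\,,\,\cdot)$ through the combination $\epsilon_n(x+r_n)$, whose behavior cannot be controlled as $n$ varies. This is handled by replacing any pointwise asymptotic information on $A$ at infinity with the mere boundedness of $A$ on $\mathbb{R}^2$, which is a consequence of Class 3 together with continuity; this boundedness, combined with $(V_4)$, is precisely what makes the $L^\infty$ bound on $B_n$ (and hence the Lieberman estimate) uniform in $\epsilon_n$. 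The existence threshold $\epsilon_0$ itself is inherited from Proposition \ref{R7}.
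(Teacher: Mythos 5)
Your proof follows exactly the route the paper itself indicates (``proceeding analogously as in the proof of Lemma~\ref{R10}''): contradiction, translation, uniform Lieberman $C^{1,\gamma_0}$ bounds, Arzel\`a--Ascoli, and the observation that $\max\{|\alpha_n|,|\beta_n|\}\to 0$ forces the limit to be identically zero. The key point that condition $(V_4)$ supplies an $n$-independent bound on $V'_{\alpha_n,\beta_n}$ is handled correctly, and you correctly identify that the uniformity in $\epsilon_n$ is what makes $\delta$ independent of $\epsilon$ (because the Lieberman constant sees only $\|B_n\|_{L^\infty}$, not $\epsilon_n$ itself).

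There is, however, one sentence that asserts a false implication: you claim that the Class~3 condition $\liminf_{|(x,y)|\to+\infty}A(x,y)=A_\infty<+\infty$, together with the continuity of $A$, yields $\|A\|_{L^\infty(\mathbb{R}^2)}<+\infty$. This does not follow: a $\liminf$ being finite only controls $A$ from below along some directions at infinity and gives no upper bound (consider $A(x,y)=1+|x|\,(1+\cos\pi x)$, which is continuous, bounded below, periodic in $y$ trivially, has $\liminf_{|x|\to\infty}A=1$, yet is unbounded). The argument really does require $A\in L^\infty(\mathbb{R}^2)$, since the translated coefficient $A(\epsilon_n(x+r_n),y+s_n)$ can sample $A$ anywhere on $\mathbb{R}\times[0,1]$. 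This is admittedly a point the paper itself glosses over for Class~3 (for Classes~1 and~2, periodicity respectively domination by a periodic $A_p$ give boundedness automatically); the paper evidently intends either that $\liminf$ be read as $\lim$, or that $A\in L^\infty$ be implicitly part of Class~3. You should state the boundedness of $A$ as a hypothesis you are using rather than derive it from the $\liminf$ condition, since the derivation is invalid.
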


We now present the proof of Theorem \ref{T2}.

\noindent{\textbf{Proof of Theorem \ref{T2}.}}\\
The proof can be done via a comparison argument like that of the proof Theorem \ref{T1} and we omit it here. Details are left to the reader. \hspace{8,3 cm} $\square$

For the final exhibition of these ideas, let's assume that the function $A$ belongs to Class 4, $\alpha=-\beta$, $\epsilon=1$ and that each $V_{-\beta,\beta}\in\mathcal{V} \cap C^2(\mathbb{R},\mathbb{R})$ also satisfies the conditions $(V_5)$ and $(V_6)$. We want to point out that condition $(V_5)$ implies that the potential $V_{-\beta,\beta}$ satisfies $(V_8)$ with $\Phi_L$. In fact, note that by $(V_5)$ there are $\rho>0$ and $\theta\in (0,\frac{\beta}{2})$ such that
\begin{equation}\label{***}
	\rho|t-\beta|^2\leq V_{-\beta,\beta}(t),~\forall t\in(\beta-\theta,\beta+\theta),
\end{equation}
from which it follows by Lemma \ref{R9}-(c) and \eqref{***},
$$
2\rho\Phi_L(|t-\beta|)\leq V_{-\beta,\beta}(t),~\forall t\in(\beta-\theta,\beta+\theta).	
$$	
Consequently, the argument of Subsection \ref{Sb5} shows that for each $L>0$ the set $K_{\Phi_L}^o(\beta)$ is not empty, where $K_{\Phi_L}^o(\beta)$ is given as in \eqref{79}. We would like to remind here that each element of $K_{\Phi}^o(\beta)$ can be seen as a function on $\mathbb{R}^2$ being periodic in the variable $y$. Moreover, if $u_\beta\in K_{\Phi_L}^o(\beta)$, then $u_\beta$ is a weak solution for $(E)_L$ with $\epsilon=1$ in $C^{1,\gamma}_{\text{loc}}(\mathbb{R}^2)$ for some $\gamma\in(0,1)$, odd and heteroclinic from $-\beta$ to $\beta$ in $x$ satisfying
$$
0\leq u_\beta(x,y)\leq\beta\text{ for all }(x,y)\in\mathbb{R}_+\times\mathbb{R}.
$$
Now, the following result is a similar version of Lemma \ref{R10} and is proved in an essentially identical fashion, which will play an analogous role to that developed in Theorem \ref{T1} in the present setting.

\begin{lemma}\label{R12}
There exists $\delta>0$ such that for each $V_{-\beta,\beta}\in\mathcal{V} \cap C^2(\mathbb{R},\mathbb{R})$ satisfying $(V_5)$-$(V_6)$ with $\beta\in(0,\delta)$, the heteroclinic solution $u_{\beta}\in K_{\Phi_L}^o(\beta)$ satisfies 
	$$
	\|u_{\beta}\|_{C^1(B_{1}(z))}<\sqrt{L}, \quad \forall z\in\mathbb{R}^2.
	$$
\end{lemma}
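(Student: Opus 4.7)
The plan is to argue by contradiction, following the very same scheme used in the proof of Lemma \ref{R10}, but in the odd/periodic framework of Subsection \ref{Sb5}. Suppose the conclusion fails. Then there exist a sequence $\beta_n \to 0^+$, potentials $V_{-\beta_n,\beta_n}\in \mathcal{V}\cap C^2(\mathbb{R},\mathbb{R})$ satisfying $(V_5)$--$(V_6)$, corresponding heteroclinic solutions $u_{\beta_n}\in K_{\Phi_L}^o(\beta_n)$, and points $z_n=(r_n,s_n)\in\mathbb{R}^2$ such that
$$
\|u_{\beta_n}\|_{C^1(B_1(z_n))}\geq \sqrt{L}, \quad \forall n\in \mathbb{N}.
$$
The strategy is to translate and pass to the limit to produce a contradiction using the fact that $\|u_{\beta_n}\|_{L^\infty(\mathbb{R}^2)}\leq \beta_n \to 0$.

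First, I would set $\tilde{u}_n(x,y)=u_{\beta_n}(x+r_n,y+s_n)$, so that $\tilde{u}_n$ is a weak solution of
$$
-\Delta_{\Phi_L}\tilde{u}_n+B_n(x,y)=0 \quad \text{in}\quad \mathbb{R}^2,
$$
where $B_n(x,y)=A(x+r_n,y+s_n)\,V'_{-\beta_n,\beta_n}(\tilde{u}_n(x,y))$. Since $A\in L^\infty(\mathbb{R}^2)$ (Class 4) and $|\tilde{u}_n(x,y)|\leq \beta_n$ a.e., the local uniform estimate $(V_4)$ applied with $\lambda=1$ (permissible once $n$ is large enough so that $\beta_n<1$) yields a constant $M>0$ independent of $n$ with
$$
|B_n(x,y)|\leq M\,\|A\|_{L^\infty(\mathbb{R}^2)} \quad \text{for all } (x,y)\in \mathbb{R}^2 \text{ and all large } n.
$$

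Next, since $\Phi_L$ satisfies $(\phi_1)$--$(\phi_2)$, the regularity theorem of Lieberman \cite[Theorem 1.7]{Lieberman} applies with constants depending only on $\Phi_L$ and on the above $L^\infty$ bound for $B_n$; hence there exist $\gamma_0\in(0,1)$ and a constant $R>0$ independent of $n$ such that
$$
\|\tilde{u}_n\|_{C^{1,\gamma_0}(\overline{B_2(0)})}\leq R \quad \forall n\in\mathbb{N}.
$$
By the Arzel\`a--Ascoli theorem, up to a subsequence $\tilde{u}_n\to u$ in $C^1(\overline{B_1(0)})$ for some $u\in C^1(\overline{B_1(0)})$. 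On the other hand, $\|\tilde{u}_n\|_{L^\infty(\mathbb{R}^2)}\leq \beta_n\to 0$ forces $u\equiv 0$ on $\overline{B_1(0)}$, and in particular $\|\tilde{u}_n\|_{C^1(B_1(0))}\to 0$. This gives $\|\tilde{u}_n\|_{C^1(B_1(0))}<\sqrt{L}$ for all $n$ large, which translates back to $\|u_{\beta_n}\|_{C^1(B_1(z_n))}<\sqrt{L}$, a contradiction.

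The main technical point to check carefully is the uniform estimate on $B_n$: here $(V_4)$ is exactly what is needed, because although the potentials $V_{-\beta_n,\beta_n}$ change with $n$, their derivatives are uniformly bounded on the shrinking intervals $[-\beta_n,\beta_n]\subset(-1,1)$. After that, the Lieberman estimate and Arzel\`a--Ascoli are standard, and the passage to the limit is automatic because the limit of translations of functions whose sup norms go to zero must be identically zero. No genuinely new difficulty beyond those already handled in Lemma \ref{R10} is expected.
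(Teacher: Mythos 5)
Your proposal is correct and follows essentially the same argument as the paper, which simply remarks that Lemma \ref{R12} is proved in an identical fashion to Lemma \ref{R10}: translate to $B_1(0)$, use $(V_4)$ together with $A\in L^\infty(\mathbb{R}^2)$ to obtain a uniform $L^\infty$ bound on the reaction term, invoke Lieberman's estimate and Arzel\`a--Ascoli, and conclude from $\|u_{\beta_n}\|_{L^\infty}\le\beta_n\to 0$ that the $C^1$-limit is zero, contradicting the assumed lower bound.
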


Finally, to conclude this section, we prove Theorem \ref{T3} using the framework discussed above.

\noindent{\textbf{Proof of Theorem \ref{T3}.}}\\
The proof is established using Lemma \ref{R12} and arguing as in the proof of Theorem \ref{T1}. Detailed verification is left to the reader. \hspace{10,0 cm} $\square$



\appendix
\section{Orlicz and Orlicz-Sobolev Spaces}\label{A}



In the years 1932 and 1936 Orlicz considered and investigated in \cite{Orlicz1,Orlicz2} the following class of functions
$$
L^{\Phi}(\mathcal{O})=\left\{u\in L^1_{\text{loc}}(\mathcal{O}):~\int_\mathcal{O} \Phi\left(\frac{|u|}{\lambda}\right) dx<+\infty~\text{for some}~\lambda>0\right\},
$$ 
where $\mathcal{O}$ is an open set of $\mathbb{R}^N$ with $N \geq 1$ and the function $\Phi:\mathbb{R}\rightarrow [0,+\infty)$ has the following properties:
\begin{itemize}
	\item[(a)] $\Phi$ is continuous, convex and even,
	\item[(b)] $\Phi(t)=0$ if and only if $t=0$,
	\item[(c)] $\displaystyle \lim_{t\rightarrow 0}\dfrac{\Phi(t)}{t}=0$ and $\displaystyle \lim_{t\rightarrow +\infty}\dfrac{\Phi(t)}{t}=+\infty$.
\end{itemize}
In these configurations, we say that $\Phi$ is an \textbf{$N$-function}. In Orlicz's paper \cite{Orlicz1}, he also introduced an additional condition on the function $\Phi$ the so called \textbf{$\Delta_2$-condition} ($\Phi\in\Delta_2$ for short) which says that there exist constants $C>0$ and $t_{0}\geq0$ such that 
$$
\Phi(2t)\leq C\Phi(t),~~\forall t \geq t_0. \eqno{(\Delta_2)}
$$ 
Below are some examples of $N$-functions that satisfy $(\Delta_2)$ with $t_0=0$:
\begin{itemize}
	\item[1.] $\Phi_1(t)=\dfrac{|t|^p}{p}$ with $p\in(1,+\infty)$,
	\item[2.] $\Phi_2(t)=\dfrac{|t|^p}{p}+\dfrac{|t|^q}{q}$ for $p<q$ and $p,q\in(1,+\infty)$,
	\item[3.]  $\Phi_3(t)=|t|^p\ln(1+|t|)$, where $p\in(1,+\infty)$,
	\item[4.] $\Phi_4(t)=(1+t^2)^{\gamma}-1$ with $\gamma>1$,
	\item[5.] $\Phi_5(t)=(\sqrt{1+t^2}-1)^\gamma$ for $\gamma>1$,
	\item [6.] $\Phi_6(t)=\displaystyle\int_{0}^{t}s^{1-\gamma}(\sinh^{-1}s)^{\beta}ds$ with $0\leq\gamma< 1$ and $\beta>0$.
\end{itemize}

The following Minkowski functional
$$
\|u\|_{L^{\Phi}(\mathcal{O})}=\inf\left\{\lambda>0:\int_\mathcal{O} \Phi\left(\frac{|u|}{\lambda}\right) dx\leq 1\right\},~~u\in L^{\Phi}(\mathcal{O}),
$$
was introduced by Luxemburg in his thesis \cite{Luxemburg} and the reader can verify that $\|u\|_{L^{\Phi}(\mathcal{O})}$ defines a norm on $L^\Phi(\mathcal{O})$, which is called the \textbf{Luxemburg norm} over $\mathcal{O}$. A direct computation ensures that $L^{\Phi}(\mathcal{O})$ endowed with the  Luxemburg norm is a Banach space. To illustrate this phenomenon in particular cases, consider $1<p<+\infty$ and $\Phi_p(t)=\frac{|t|^p}{p}$ to see that
$$
L^{\Phi_p}(\mathcal{O})=L^{p}(\mathcal{O})~~\text{ and }~~ \|u\|_{L^{\Phi_p}(\mathcal{O})}=p^{-\frac{1}{p}}\|u\|_{L^{p}(\mathcal{O})},
$$
where $\|\cdot\|_{L^p(\mathcal{O})}$ is the usual norm of space $L^p(\mathcal{O})$. This fact makes it clear that the spaces $L^{\Phi}(\mathcal{O})$ are more general than Lebesgue's $L^p$ spaces. However, these spaces may possess peculiar properties that do not occur in ordinary Lebesgue's spaces. Now let us note as a point of interest that if $|\mathcal{O}|<+\infty$, then a direct computation shows that the embedding 
\begin{equation*}
	L^\Phi(\mathcal{O})\hookrightarrow L^1(\mathcal{O})	
\end{equation*} 
is continuous. In other words, this embedding is equivalent to the simple containment $	L^\Phi(\mathcal{O})\subset L^1(\mathcal{O})$ in which some topological properties are preserved. The $L^\Phi(\mathcal{O})$ space became known in the literature as \textbf{Orlicz space} over $\mathcal{O}$. In \cite{Orlicz1}, Orlicz restricted himself to the case $\Phi \in \Delta_2$ and in this case we may write the Orlicz spaces as follows
$$
L^{\Phi}(\mathcal{O})=\left\{u\in L^1_{\text{loc}}(\mathcal{O}):~\int_\mathcal{O} \Phi(|u|) dx<+\infty\right\}.
$$ 
Furthermore, still under $\Delta_2$-condition we have the following fact 
\begin{equation}\label{NT0}
	u_n\rightarrow u~~ \text{in}~~L^{\Phi}(\mathcal{O})\Leftrightarrow \int_{\mathcal{O}}\Phi(|u_n-u|)dx\rightarrow 0.
\end{equation}
It is important to mention here that in general these facts do not occur when $\Phi$ does not satisfy $(\Delta_2)$. Orlicz was the first to investigate this case, that is, without the $\Delta_2$-condition, in his paper \cite{Orlicz2} in the year 1936.

A natural generalization of Sobolev spaces $W^{1,p}(\mathcal{O})$ is the following Banach space associated with an $N$-function $\Phi$ 
$$
W^{1,\Phi}(\mathcal{O})=\left\{u\in L^{\Phi}(\mathcal{O}):~\frac{\partial u}{\partial x_i}=u_{x_i}\in L^{\Phi}(\mathcal{O}),~i=1,...,N\right\},
$$ 
equipped with the norm
$$
\|u\|_{W^{1,\Phi}(\mathcal{O})}=\|\nabla u\|_{L^{\Phi}(\mathcal{O})}+\|u\|_{L^{\Phi}(\mathcal{O})},
$$ 
where $\nabla u=(u_{x_1},...,u_{x_N})$. Space $W^{1,\Phi}(\mathcal{O})$ is usually called  \textbf{Orlicz-Sobolev space} associated with $\Phi$ over $\mathcal{O}$. It is impossible for the authors to give an exhaustive account here of the vast literature devoted to Orlicz-type spaces. For a quite comprehensive account of this topic, the interested reader might start by referring to \cite{Krasnosels'kii,Rao} and the bibliography therein. 

We are going to now consider some results that will be widely used in the development of this work. Initially we observe that given an $N$-function $\Phi$, the {\it complementary function} $\tilde{\Phi}$ associated with $\Phi$ is defined by Legendre’s transformation
$$
\tilde{\Phi}(s)=\max_{t\geq 0}\left\{st-\Phi(t)\right\}\text{ for } s\geq 0.
$$ 
The function  $\tilde{\Phi}$ is also an $N$-function, and moreover, the functions $\Phi$ and $\tilde{\Phi}$ are complementary each other. An interesting example of such complementary functions are
$$
\Phi(t)=\frac{|t|^p}{p}~\text{ and }~\tilde{\Phi}(t)=\frac{|t|^q}{q}~\text{ with }~1<p<+\infty~\text{ and }~\frac{1}{p}+\frac{1}{q}=1.
$$
An important result of the theory of Orlicz spaces is the following.
\begin{lemma}\label{A1}
	The space $L^{\Phi}(\mathcal{O})$ is reflexive if, and only if, $\Phi$ and $\tilde{\Phi}$ satisfy the $\Delta_2$-condition. 
\end{lemma}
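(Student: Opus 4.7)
The plan is to reduce the reflexivity of $L^\Phi(\mathcal{O})$ to the identification of its topological dual with $L^{\tilde\Phi}(\mathcal{O})$, and to extract both implications from this framework.

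For the sufficiency direction, I would first invoke Young's inequality $st \leq \Phi(t)+\tilde\Phi(s)$ to derive the Hölder-type bound $\int_\mathcal{O}|fg|\,dx \leq 2\|f\|_{L^\Phi}\|g\|_{L^{\tilde\Phi}}$, so that the pairing $\langle f,g\rangle = \int_\mathcal{O} fg\,dx$ produces a continuous injection $J\colon L^{\tilde\Phi} \hookrightarrow (L^\Phi)^*$. The crux is to prove that $J$ is surjective whenever $\Phi \in \Delta_2$. Under $\Delta_2$, norm-convergence and modular-convergence coincide (as in \eqref{NT0}), so simple integrable functions are norm-dense in $L^\Phi$; given $\psi \in (L^\Phi)^*$, evaluating $\psi$ on indicators of bounded measurable sets produces a signed measure absolutely continuous with respect to Lebesgue measure, and the Radon--Nikodym theorem yields a density $g$ representing $\psi$, with a cut-off/truncation argument against the Luxemburg norm showing $g \in L^{\tilde\Phi}$. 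Applying this identification once more with $\tilde\Phi$ in place of $\Phi$ (using $\tilde{\tilde\Phi} = \Phi$ and $\tilde\Phi \in \Delta_2$) gives $(L^\Phi)^{**} \cong (L^{\tilde\Phi})^* \cong L^\Phi$ via the canonical embedding, hence $L^\Phi$ is reflexive.

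For the necessity direction, I would argue by contraposition: if either $\Phi$ or $\tilde\Phi$ fails $\Delta_2$, I will exhibit a closed subspace of $L^\Phi$ isomorphic to $\ell^\infty$, and since $\ell^\infty$ is non-reflexive while reflexivity is inherited by closed subspaces, this precludes reflexivity of $L^\Phi$. Assuming $\Phi \notin \Delta_2$, I extract a sequence $t_n\to\infty$ with $\Phi(2t_n)\geq 2^n\Phi(t_n)$ and choose pairwise disjoint measurable sets $A_n \subset \mathcal{O}$ with $|A_n|\Phi(t_n) = 2^{-n}$; setting $f_n = t_n\chi_{A_n}$, I would show that the map $(c_n) \mapsto \sum c_n f_n$ is an isomorphism from $\ell^\infty$ onto the closed linear span of $(f_n)$ in $L^\Phi$. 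If instead $\tilde\Phi \notin \Delta_2$, the same construction builds a non-reflexive subspace of $L^{\tilde\Phi}$; since $L^{\tilde\Phi}$ is isometrically embedded in $(L^\Phi)^*$ via $J$ above, and reflexivity of $L^\Phi$ would force reflexivity of $(L^\Phi)^*$ (hence of every closed subspace), we again conclude $L^\Phi$ cannot be reflexive.

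The main technical obstacle will be the $\ell^\infty$-embedding step: calibrating the modular $\int_\mathcal{O} \Phi\bigl(|\sum c_n f_n|/\lambda\bigr)\,dx = \sum_n |A_n|\Phi(|c_n|t_n/\lambda)$ so that the Luxemburg norm of $\sum c_n f_n$ is equivalent to $\sup_n |c_n|$. The lower bound requires exploiting $\Phi(2t_n)\geq 2^n\Phi(t_n)$ to rule out any $\lambda$ much smaller than $\tfrac12\sup_n|c_n|$, while the upper bound uses the rapid decay $|A_n|\Phi(t_n) = 2^{-n}$ together with convexity of $\Phi$ to bound the modular at $\lambda \approx \sup_n|c_n|$ by a summable geometric series.
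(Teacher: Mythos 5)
The paper gives no proof of Lemma \ref{A1}; it simply cites Rao and Ren \cite{Rao}, Ch.~IV, Theorem~10. Your proposal reconstructs essentially the classical argument contained in such references: sufficiency via the duality $(L^\Phi)^*\cong L^{\tilde\Phi}$ under $\Phi\in\Delta_2$, applied twice; necessity by building a nonreflexive sequence space inside $L^\Phi$ (or inside $(L^\Phi)^*$ via $L^{\tilde\Phi}$) from a sequence $(t_n)$ witnessing the failure of $\Delta_2$. The strategy is sound and the key ingredients are all in the right places.

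Two points of precision to fix in a full write-up. First, what you call \emph{the closed linear span of} $(f_n)$ is isomorphic to $c_0$, not $\ell^\infty$; the copy of $\ell^\infty$ is the full range of the operator $T\colon\ell^\infty\to L^\Phi$, $T(c)=\sum_n c_n f_n$, which is closed because $T$ is bounded below. Your calibration already gives the lower bound: for $\lambda<\tfrac12$ one has $\Phi(t_n/\lambda)\geq\Phi(2t_n)\geq 2^n\Phi(t_n)=1/|A_n|$, so $\|f_n\|_{L^\Phi}\geq\tfrac12$, and disjointness of the supports together with monotonicity of the Luxemburg norm yield $\|T(c)\|_{L^\Phi}\geq\tfrac12\sup_n|c_n|$. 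Either $c_0$ or $\ell^\infty$ finishes the contradiction, since both are nonreflexive and reflexivity passes to closed subspaces, but the phrasing should match the object you actually construct. Second, before choosing the disjoint sets $A_n\subset\mathcal{O}$ with $|A_n|\Phi(t_n)=2^{-n}$, pass to a subsequence of $(t_n)$ so that $\sum_n 2^{-n}/\Phi(t_n)<|\mathcal{O}|$; this guarantees that pairwise disjoint sets of the prescribed measures can actually be placed inside $\mathcal{O}$. With those adjustments the outline becomes a correct, self-contained proof of what the paper outsources to the reference.
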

\begin{proof}
	See for instance \cite[Ch. IV, Theorem 10]{Rao}.
\end{proof}

Before ending this section, we would like to point out that a function $\Phi$ of the form \eqref{*} satisfying $(\phi_1)$ and $(\phi_2)$ is an $N$-function. Moreover, the reader can verify that examples 1 to 6 listed above are models of $N$-functions of the type \eqref{*} checking $(\phi_1)$-$(\phi_2)$. Next, we list some lemmas about $N$-functions of the form \eqref{*} that will be used frequently in this work.

\begin{lemma}\label{A2}
	Let $\Phi$ be an $N$-function of the form \eqref{*} satisfying $(\phi_1)$-$(\phi_2)$. Set 
	$$
	\xi_0(t)=\min\{t^{l},t^{m}\}~~\text{and}~~\xi_1(t)=\max\{t^{l},t^{m}\},~~\forall t\geq 0.
	$$ 
	Then $\Phi$ satisfies 
	$$
	\xi_0(t)\Phi(s)\leq\Phi(st)\leq\xi_1(t)\Phi(s),~~\forall s,t\geq0.
	$$
	In particular, $\Phi\in\Delta_2$.
\end{lemma}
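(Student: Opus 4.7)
The plan is to derive the two-sided growth inequality from a single sharper estimate, namely that the ratio $t\Phi'(t)/\Phi(t) = t^2\phi(t)/\Phi(t)$ is trapped between $l$ and $m$ on $(0,\infty)$. Once this is established, the inequality for $\Phi(st)$ follows by observing that the auxiliary functions $t \mapsto \Phi(t)/t^l$ and $t \mapsto \Phi(t)/t^m$ are respectively non-decreasing and non-increasing.

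First I would prove the key estimate
\begin{equation*}
l\,\Phi(t) \leq t^2\phi(t) \leq m\,\Phi(t), \qquad t > 0.
\end{equation*}
For the lower bound, set $\psi(t) = t^2\phi(t) - l\Phi(t)$. Then $\psi(0)=0$ and a direct computation gives
\begin{equation*}
\psi'(t) = (t\phi(t))'\cdot t + t\phi(t) - l\,t\phi(t) = t\phi(t)\left[\frac{(t\phi(t))'}{\phi(t)} - (l-1)\right],
\end{equation*}
which is non-negative by the left inequality in $(\phi_2)$, whence $\psi \geq 0$ on $[0,\infty)$. The upper bound follows analogously by considering $m\Phi(t) - t^2\phi(t)$ together with the right inequality in $(\phi_2)$.

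Next, I would use this estimate to analyze the monotonicity of the ratios $h_l(t)=\Phi(t)/t^l$ and $h_m(t)=\Phi(t)/t^m$. Since $\Phi'(t)=t\phi(t)$, differentiation gives
\begin{equation*}
h_l'(t) = \frac{t^2\phi(t)-l\,\Phi(t)}{t^{l+1}} \geq 0, \qquad h_m'(t) = \frac{t^2\phi(t)-m\,\Phi(t)}{t^{m+1}} \leq 0,
\end{equation*}
so $h_l$ is non-decreasing and $h_m$ is non-increasing on $(0,\infty)$.

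Finally, I would conclude by splitting into the cases $t \geq 1$ and $0 \leq t \leq 1$ (and trivially $s=0$). For $t\geq 1$, the monotonicity of $h_l$ and $h_m$ applied to $s\leq st$ yields $t^l\Phi(s)\leq \Phi(st)\leq t^m\Phi(s)$, which is exactly $\xi_0(t)\Phi(s)\leq \Phi(st)\leq \xi_1(t)\Phi(s)$. For $0\leq t\leq 1$ one has $st\leq s$, and the same monotonicities reverse the roles of $l$ and $m$, giving $t^m\Phi(s)\leq \Phi(st)\leq t^l\Phi(s)$, which again matches the definitions of $\xi_0$ and $\xi_1$ in this range. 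The $\Delta_2$ condition is then immediate: taking $t=2$ yields $\Phi(2s)\leq 2^m\Phi(s)$ for every $s\geq 0$. No step here presents a real obstacle; the only subtlety is keeping track of which exponent is the minimum and which is the maximum depending on whether $t\gtrless 1$, but this is handled automatically by the definitions of $\xi_0$ and $\xi_1$.
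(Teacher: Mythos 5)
Your proof is correct and follows the same path as the paper's: both first establish the key two-sided bound $l\,\Phi(t)\le t^2\phi(t)\le m\,\Phi(t)$ from $(\phi_2)$ (the paper by integrating $l\phi(t)t\le(\phi(t)t^2)'\le m\phi(t)t$, you by a sign analysis of $\psi'$, which is an equivalent computation), and then deduce the $\xi_0,\xi_1$ inequality from the monotonicity of $\Phi(t)/t^l$ and $\Phi(t)/t^m$. The only difference is that the paper defers this second half to the cited Lemma 2.1 of Fukagai--Ito--Narukawa, whereas you spell it out in full, including the case split $t\ge 1$ versus $t\le 1$ that matches the definitions of $\xi_0$ and $\xi_1$.
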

\begin{proof}
Let's first show that condition $(\phi_2)$ leads to 
\begin{equation}\label{0}
	l\leq \dfrac{\phi(t)t^{2}}{\Phi(t)}\leq m,~~\forall t>0.
\end{equation}
Indeed, by $(\phi_2)$ we can write
$$
l\phi(t)\leq \left(\phi(t)t\right)'+\phi(t)\leq m\phi(t),~~\forall t>0,
$$
from which it follows that
$$
l\phi(t)t\leq\left(\phi(t)t^2\right)'\leq m\phi(t)t,~~\forall t>0.
$$
Therefore, integrating the last inequality we get \eqref{0}, as we wanted. Finally, due to estimation \eqref{0} the proof becomes similar to that given in \cite[Lemma 2.1]{Fukagai}. The details are left to the reader.
\end{proof}

\begin{lemma}\label{A3}
	Let $\Phi$ be an $N$-function of the form \eqref{*} satisfying $(\phi_1)$-$(\phi_2)$. Then, $\tilde{\Phi}$ satisfies the $\Delta_2$-condition. 
\end{lemma}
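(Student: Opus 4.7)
My plan is to reduce the $\Delta_2$-condition for $\tilde{\Phi}$ to the boundedness of the ratio $s\tilde{\Phi}'(s)/\tilde{\Phi}(s)$ and then use Young's equality together with the estimate already established in Lemma \ref{A2}. More precisely, since $\Phi'(t)=t\phi(t)$ is continuous and strictly increasing from $[0,+\infty)$ onto $[0,+\infty)$ by $(\phi_1)$, its inverse $(\Phi')^{-1}$ exists and coincides with $\tilde{\Phi}'$. Thus $\tilde{\Phi}$ is $C^1$ and convex, and the key object to control is the ratio $R(s):=s\tilde{\Phi}'(s)/\tilde{\Phi}(s)$.

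The main computation goes as follows. Given $s>0$, write $s=\Phi'(t)$ with $t=\tilde{\Phi}'(s)$. Young's equality gives
\[
\tilde{\Phi}(s)=st-\Phi(t)=t\Phi'(t)-\Phi(t),
\]
so that
\[
R(s)=\frac{s\tilde{\Phi}'(s)}{\tilde{\Phi}(s)}=\frac{t\Phi'(t)}{t\Phi'(t)-\Phi(t)}=\frac{1}{\,1-\dfrac{\Phi(t)}{t\Phi'(t)}\,}.
\]
Now, since $\Phi'(t)=t\phi(t)$, the left inequality in \eqref{0} from the proof of Lemma \ref{A2} reads $l\leq t\Phi'(t)/\Phi(t)$, hence $\Phi(t)/(t\Phi'(t))\leq 1/l$. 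Because $l>1$ by $(\phi_2)$, we obtain
\[
R(s)\leq \frac{1}{1-\frac{1}{l}}=\frac{l}{l-1}<+\infty,\qquad \forall s>0.
\]

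From this bound, $\Delta_2$ for $\tilde{\Phi}$ follows by a direct integration: setting $M:=l/(l-1)$, for any $s>0$,
\[
\ln\frac{\tilde{\Phi}(2s)}{\tilde{\Phi}(s)}=\int_s^{2s}\frac{\tilde{\Phi}'(u)}{\tilde{\Phi}(u)}\,du\leq M\int_s^{2s}\frac{du}{u}=M\ln 2,
\]
so that $\tilde{\Phi}(2s)\leq 2^M\tilde{\Phi}(s)$ for every $s>0$, which is exactly the $\Delta_2$-condition with $C=2^{l/(l-1)}$ and $t_0=0$.

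The only delicate point is justifying the identity $\tilde{\Phi}'=(\Phi')^{-1}$ and the use of Young's equality with equality case $s=\Phi'(t)$; both are classical consequences of the fact that $\Phi$ is $C^1$, strictly convex, and $\Phi'$ is a bijection of $[0,+\infty)$ by $(\phi_1)$. Everything else is a routine manipulation feeding on the lower bound $l\leq t\Phi'(t)/\Phi(t)$ already produced in Lemma \ref{A2}, and the whole argument crucially exploits $l>1$ to keep $l/(l-1)$ finite — this is the one place where the hypothesis that the lower exponent strictly exceeds $1$ is essential.
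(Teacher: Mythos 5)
Your proof is correct and supplies the actual argument that the paper elides by citing \cite[Lemma 2.7]{Fukagai}. The key steps are all sound: $\Phi'(t)=t\phi(t)$ is a continuous strictly increasing bijection of $[0,+\infty)$ onto itself by $(\phi_1)$ (and the fact that $\Phi$ is an $N$-function, which forces $\Phi'(t)\to+\infty$), so $\tilde{\Phi}'=(\Phi')^{-1}$ exists and is continuous; Young's equality at the conjugate pair $(t,s)=(t,\Phi'(t))$ correctly yields $\tilde{\Phi}(s)=t\Phi'(t)-\Phi(t)$; and the lower bound $l\leq t\Phi'(t)/\Phi(t)$ from inequality \eqref{0} in the proof of Lemma \ref{A2} immediately converts into the upper bound $s\tilde{\Phi}'(s)/\tilde{\Phi}(s)\leq l/(l-1)$. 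The concluding integration is a clean way to pass from the bounded logarithmic derivative ratio to the doubling inequality $\tilde{\Phi}(2s)\leq 2^{l/(l-1)}\tilde{\Phi}(s)$ valid for all $s>0$, which is $(\Delta_2)$ with $t_0=0$. One small stylistic observation: the standard presentation (which is what Fukagai et al.\ essentially do) establishes both bounds $m/(m-1)\leq s\tilde{\Phi}'(s)/\tilde{\Phi}(s)\leq l/(l-1)$ and then invokes the analogue of Lemma \ref{A2} for $\tilde{\Phi}$; your version is slightly leaner because $\Delta_2$ only needs the upper bound, and the direct integration avoids having to check that $\tilde{\Phi}$ itself has the form \eqref{*} with some $\tilde{\phi}$ satisfying $(\phi_1)$--$(\phi_2)$. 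You correctly flag where $l>1$ is essential, namely in keeping $1-1/l>0$ and hence $l/(l-1)$ finite.
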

\begin{proof}
 See \cite[Lemma 2.7 ]{Fukagai}.
\end{proof}

\begin{lemma}\label{A4}
	If $\Phi$ is an $N$-function of the form \eqref{*} satisfying $(\phi_1)$-$(\phi_2)$, then the spaces $L^{\Phi}(\mathcal{O})$ and $W^{1,\Phi}(\mathcal{O})$ are reflexive.
\end{lemma}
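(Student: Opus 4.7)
The plan is to reduce reflexivity of $L^{\Phi}(\mathcal{O})$ to Lemma \ref{A1} and then bootstrap to $W^{1,\Phi}(\mathcal{O})$ via a standard product-space embedding.

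First, for $L^{\Phi}(\mathcal{O})$, I would simply invoke Lemma \ref{A1}: it suffices to verify that both $\Phi$ and its complementary function $\tilde{\Phi}$ satisfy the $\Delta_2$-condition. But this is exactly what the previous two lemmas give: Lemma \ref{A2} yields $\Phi\in\Delta_2$ (as stated in its conclusion), and Lemma \ref{A3} yields $\tilde{\Phi}\in\Delta_2$. Hence $L^{\Phi}(\mathcal{O})$ is reflexive.

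Next, for $W^{1,\Phi}(\mathcal{O})$, the idea is the classical trick of embedding it isometrically into a finite product of Orlicz spaces. Define the map
$$
T\colon W^{1,\Phi}(\mathcal{O})\longrightarrow \bigl(L^{\Phi}(\mathcal{O})\bigr)^{N+1},\qquad Tu=(u,u_{x_1},\dots,u_{x_N}).
$$
By the very definition of the norm on $W^{1,\Phi}(\mathcal{O})$, $T$ is a linear isometry (up to equivalence of norms) onto its image. Since products of reflexive Banach spaces are reflexive, $\bigl(L^{\Phi}(\mathcal{O})\bigr)^{N+1}$ is reflexive by the first part. A closed subspace of a reflexive Banach space is reflexive, so it remains to check that $T\bigl(W^{1,\Phi}(\mathcal{O})\bigr)$ is closed. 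This is standard: if $(u_n)\subset W^{1,\Phi}(\mathcal{O})$ and $Tu_n\to(v,w_1,\dots,w_N)$ in the product, then $u_n\to v$ and $(u_n)_{x_i}\to w_i$ in $L^{\Phi}(\mathcal{O})$; since $\Phi\in\Delta_2$ together with the continuous embedding $L^{\Phi}(\mathcal{O})\hookrightarrow L^1_{\text{loc}}(\mathcal{O})$ gives convergence in $L^1_{\text{loc}}(\mathcal{O})$, one can pass to the limit in the distributional identity $\int u_n \varphi_{x_i}\,dx=-\int (u_n)_{x_i}\varphi\,dx$ for every $\varphi\in C_c^{\infty}(\mathcal{O})$ to conclude that $w_i=v_{x_i}$ in the weak sense, so $v\in W^{1,\Phi}(\mathcal{O})$ with $Tv=(v,w_1,\dots,w_N)$.

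The main (mild) obstacle is verifying that the image of $T$ is closed, which is essentially the only non-automatic step and rests on the fact that the $\Delta_2$-condition allows one to pass from norm convergence in $L^{\Phi}$ to local $L^1$ convergence via the modular characterization recorded in \eqref{NT0}. Once that is in place, the two conclusions of the lemma follow immediately.
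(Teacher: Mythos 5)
Your proof is correct and follows essentially the same route as the paper: the paper's proof is the one-liner ``follows directly from Lemmas \ref{A1}, \ref{A2} and \ref{A3},'' which is exactly your first paragraph, and your treatment of $W^{1,\Phi}(\mathcal{O})$ via the isometric embedding into a finite product of reflexive Orlicz spaces is the standard argument the paper leaves implicit.
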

\begin{proof}
	The proof follows directly from Lemmas \ref{A1}, \ref{A2} and \ref{A3}.
\end{proof}

For our study, it is useful to consider the following lemma whose proof is left for the reader to verify.

\begin{lemma}\label{A5}
	Let $\Phi$ be an $N$-function of the type \eqref{*} satisfying $(\phi_1)$-$(\phi_2)$. Then the following inequalities hold
	\begin{itemize}
		\item[(a)]  $\Phi(|a+b|)\leq 2^m\left(\Phi(|a|)+\Phi(|b|)\right)$ for all  $a,b\in\mathbb{R}$.
		\item[(b)] $\phi(|z|)z.(w-z)\leq\Phi(|w|)-\Phi(|z|)$ for all $w,z\in\mathbb{R}^N$ with $z\neq0$, where ``$.$" denotes the usual inner product in $\mathbb{R}^N$.
	\end{itemize}
\end{lemma}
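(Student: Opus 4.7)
For part (a), the plan is to exploit the $\Delta_2$-type growth estimate already established in Lemma \ref{A2}, together with the convexity of $\Phi$. First I would observe that since $\Phi$ is even and non-decreasing on $[0,\infty)$, $\Phi(|a+b|)\leq \Phi(|a|+|b|)$. Writing $|a|+|b| = 2\cdot \tfrac{|a|+|b|}{2}$ and applying Lemma \ref{A2} with $s=\tfrac{|a|+|b|}{2}$ and $t=2$ gives
\[
\Phi(|a|+|b|) \leq \xi_1(2)\,\Phi\!\left(\tfrac{|a|+|b|}{2}\right) = 2^m\,\Phi\!\left(\tfrac{|a|+|b|}{2}\right).
\]
Then the convexity of $\Phi$ (guaranteed by its being an $N$-function) yields $\Phi\!\left(\tfrac{|a|+|b|}{2}\right)\leq \tfrac{1}{2}(\Phi(|a|)+\Phi(|b|))$, and combining these produces $\Phi(|a+b|)\leq 2^{m-1}(\Phi(|a|)+\Phi(|b|))$, which in particular implies the stated inequality. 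This part is entirely routine.

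For part (b), I would reduce the inequality to the one-dimensional convexity of a suitable auxiliary function. Consider $f:[0,1]\to\mathbb{R}$ defined by
\[
f(t) = \Phi\bigl(|z+t(w-z)|\bigr).
\]
Since $\Phi$ is convex and non-decreasing on $[0,\infty)$ and $t\mapsto |z+t(w-z)|$ is convex, $f$ is convex on $[0,1]$. Moreover, near $t=0$ we have $z+t(w-z)\neq 0$ because $z\neq 0$, so $f$ is differentiable at $0$ with
\[
f'(0) = \Phi'(|z|)\cdot \frac{z\cdot(w-z)}{|z|} = |z|\phi(|z|)\cdot \frac{z\cdot(w-z)}{|z|} = \phi(|z|)\,z\cdot(w-z),
\]
where I used that $\Phi'(t)=t\phi(t)$ for $t>0$ from \eqref{*}. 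The convexity of $f$ gives $f(1)\geq f(0)+f'(0)$, which is precisely
\[
\Phi(|w|)\geq \Phi(|z|) + \phi(|z|)\,z\cdot(w-z),
\]
that is, the desired inequality after rearrangement.

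The main technical point to be careful about is simply verifying that $t\mapsto \Phi(|z+t(w-z)|)$ is indeed convex, which follows from $\Phi$ being convex and non-decreasing on $[0,\infty)$ (a consequence of $\Phi$ being an $N$-function with $\Phi(0)=0$ and $\Phi\geq 0$); and that the one-sided derivative at $t=0$ can be computed classically because $z\neq 0$ ensures smoothness of $|z+t(w-z)|$ in a neighborhood of $0$. Neither step presents a real obstacle, so both parts reduce to short convexity arguments and an application of Lemma \ref{A2}.
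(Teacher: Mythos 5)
Your proof is correct; both parts are sound, and in (a) you even obtain the sharper constant $2^{m-1}$, which of course implies the stated bound with $2^m$. The paper leaves this lemma to the reader (no proof is given), and your argument is exactly the standard one: monotonicity of $\Phi$ plus Lemma \ref{A2} and convexity for (a), and for (b) the supporting-line inequality for the convex function $t\mapsto\Phi(|z+t(w-z)|)$, whose derivative at $t=0$ is computed legitimately since $z\neq 0$ and $\Phi'(s)=s\phi(s)$ for $s>0$.
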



\vspace{0.2 cm}
\noindent {\bf Acknowledgments:} The authors would like to thank to the anonymous referee for their invaluable suggestions, which significantly contributed to the enhancement of this paper.

\end{document}